\documentclass[twoside,11pt]{article}
\usepackage{amsmath,amsthm}
\usepackage[preprint]{jmlr2e}
\usepackage{microtype,booktabs,mathtools,mathrsfs}
\usepackage{algorithm,algorithmic}
\usepackage{titletoc}
\usepackage[capitalize,noabbrev]{cleveref}
\numberwithin{theorem}{section}
\theoremstyle{plain}
\newtheorem{prop}[theorem]{Proposition}
\newtheorem{cor}[theorem]{Corollary}
\newtheorem{lem}[theorem]{Lemma}
\theoremstyle{remark}

\crefname{prop}{Proposition}{Propositions}
\Crefname{prop}{Proposition}{Propositions}
\crefname{cor}{Corollary}{Corollaries}
\Crefname{cor}{Corollary}{Corollaries}
\crefname{lem}{Lemma}{Lemmas}
\Crefname{lem}{Lemma}{Lemmas}
\crefname{rem}{Remark}{Remarks}
\Crefname{rem}{Remark}{Remarks}

\newcommand{\D}{\mathscr{D}}
\newcommand{\Dc}{\mathscr{D}_{\mathrm c}}
\newcommand{\EE}{\mathbb{E}}
\newcommand{\PP}{\mathbb{P}}
\newcommand{\R}{\mathbb{R}}
\newcommand{\Z}{\mathbb{Z}}
\newcommand{\T}{\mathbb{T}}
\newcommand{\F}{\mathcal{F}}
\newcommand{\Hc}{\mathcal{H}}
\newcommand{\M}{\mathcal{M}}
\newcommand{\Pol}{\mathcal{P}}

\ShortHeadings{Minimax Rates for imperfect Physics-Informed Regression}{Doum\`eche}
\firstpageno{1}
\begin{document}
\title{Minimax Rates for imperfect Physics-Informed Regression}
\author{\name Nathan Doum\`eche }
\maketitle

\begin{abstract}
Physics-informed machine learning incorporates physical knowledge into data-driven models,
often through a partial differential equation (PDE) that the target is assumed to satisfy.
However, how much this knowledge improves statistical performance remains unclear,
especially when the physical model is imperfect.
Since satisfying a PDE implies being differentiable, a natural benchmark for
physics-informed regression is the best achievable performance when smoothness
is the only prior information about the target function.
We show that there is a characteristic threshold for the modeling error
above which the physical prior cannot improve the convergence rate.
The form of this threshold is common to every linear PDE and certain nonlinear PDEs.
Below it, the gain depends on the equation type.
For classes of elliptic and hyperbolic PDEs with constant coefficients,
we establish convergence rates that improve continuously as the modeling error decreases,
and prove their minimax optimality up to logarithmic factors.
With exact physics, the rates correspond to learning in dimension $d-1$ rather than $d$,
reflecting the freedom left by unknown boundary or initial data.
We also give conditions for computing the estimator with near-linear complexity
and show how to tune its hyperparameters without knowing the modeling error.
\end{abstract}
\begin{keywords}
physics-informed machine learning, nonparametric regression, minimax rates, kernel methods, partial differential equations, imperfect prior
\end{keywords}

\section{Introduction}
\paragraph{Physics-informed regression.} Physics-informed machine learning (PIML) combines observations with a physical model, 
typically a partial differential equation (PDE) $\D(f)=g$ satisfied on a domain $\Omega$. 
The physical prior is incorporated by penalizing the residual $\|\D(f)-g\|^2_{L^2(\Omega)}$ in the empirical risk function \citep{raissi2019PINN,karniadakis2021piml}.
In practice, such PDE models are often approximation of more complex physical processes, so that the unknown target function is not an exact solution to the PDE \citep{kennedy2001bayesian,tuo2015efficient, sangalli2021spatial, zhang2025sobolev}.
Moreover, satisfying a PDE implies differentiability. 
Indeed, if the PDE operator $\D$ involves derivatives of order $r$ (in which case it is said to be of order $r$), then a classical requirement for the PDE residual $\D(f)-g$ to be square-integrable is that the target function has weak derivatives of order $s\geq r$ in $L^2(\Omega)$.
Thus, the statistical properties of physics-informed regression must be compared with those of nonparametric regression over the class of functions with weak derivatives of order $s$, called the Sobolev space $H^s(\Omega)$.
We ask: \emph{when does a bound $\delta$ on the PDE residual allow faster estimation from $n$ samples than a bound $\rho$ on the Sobolev norm of order $s$ alone?}

For uniform design and Gaussian noise, with fixed $\rho$ and noise level, the minimax rate over Sobolev functions of order $s>d/2$ in dimension $d$ is $n^{-2s/(2s+d)}$ \citep{gyorfi2002distribution,caponnetto2007optimal}.
A natural guess is a bound of the form
\begin{equation}
\text{risk}\leq C\big(h(\delta)\,n^{-2s/(2s+d)}+n^{-2s/(2s+d-1)}\big),
\label{eq:naive}
\end{equation}
where $C$ does not depend on $n$ or $\delta$, $h(\delta)\to0$ as $\delta\to0$, and $n^{-2s/(2s+d-1)}$ is the Sobolev rate in dimension $d-1$, the minimax rate of exact solutions of the elliptic equations studied here on domains with a flat face, and of the hyperbolic ones up to a logarithmic factor.
Under such a bound, a small fixed residual would help at every sample size; for dependent data and elliptic operators of order $s$, the first term of a reported upper bound takes this form, subject to additional regularization and sample-size assumptions \citep{scampicchio2026physics}.
We prove that no such bound holds when $s>r$, where $r$ is the order of the PDE: what matters is the residual relative to a threshold that decreases with $n$.

\paragraph{Contributions.}
\begin{itemize}
\item \textbf{A universal benchmark} (\cref{thm:universal}).
For every linear PDE of order $r$, and a class of nonlinear PDEs, above a residual threshold of order $n^{-(s-r)/(2s+d)}$ the minimax risk remains of Sobolev order.
\item \textbf{Elliptic rates} (\cref{thm:elliptic,prop:ellexact}).
For constant coefficients and $d<2r$, a physics-informed kernel estimator attains this benchmark, up to the exact-solution rate in dimension $d-1$; matching lower bounds on domains with a flat face give the minimax rate without logarithmic factors.
\item \textbf{Hyperbolic rates} (\cref{thm:hyperbolic}).
For constant coefficients, $r\in\{1,2\}$ and $s>\max(r,d-r)$, the minimax rate is, up to logarithmic factors and for fixed radius and noise level, $n^{-2s/(2s+d-1)}+\min\{n^{-2s/(2s+d)},(\delta/n)^{2s/(3s+d-r)}\}$, slower than the benchmark when $r<d$.
\item \textbf{Effective dimension} (\cref{thm:count}).
We bound the effective dimension of the physics-informed kernel on bounded Lipschitz domains, without boundary values; the bounds distinguish elliptic and hyperbolic equations.
\item \textbf{Adaptation and computation} (\cref{thm:adaptation,thm:algo}).
Validation removes $\delta$ from the tuning under sub-Gaussian noise, at the price of a constant factor and an additive term of order $\log\log n/n$; given exact input matrices, the solver iterations cost $O(n\log^2n)$ under stated smoothness conditions.
\end{itemize}

\paragraph{Related work.}
Analyses of physics-informed neural networks and kernel methods give consistency and generalization bounds \citep{shin2020convergence,mishra2022generalization,deryck2024numerical,doumeche2025convergence,bonapellissier2026piks}.
Recent works quantify how physics speeds up learning through the effective dimension of physics-informed kernels \citep{doumeche2024physicsinformed,brogatmotte2026fast}, the dimension of the constrained parameter set \citep{koshizuka2025understanding}, or local complexities \citep{marcondes2025complexity,scampicchio2026physics}.
They give estimator upper bounds under geometric or complexity assumptions, with explicit examples on the torus, in dimension one, for gradient constraints, or for elliptic PDEs of order $s$; \citet{brogatmotte2026fast} leave minimax optimality and misspecified constraints open.
Minimax theory exists for neighboring problems: solving a PDE from noisy samples of its source term \citep{lu2022machine,zhao2026posterior}, recovering PDE coefficients \citep{nickl2019convergence}, estimating the state of exactly known dynamics \citep{li2024physicsencoded,konen2025data}, and learning with derivative observations \citep{dai2025nonparametric}.
In PDE-penalized spatial regression with known Dirichlet values, rates depend on the misfit, and exact physics can yield the parametric rate \citep{arnone2022consistency}.
Without boundary values, our exact-solution spaces are infinite-dimensional, with the Sobolev rate in dimension $d-1$, much as invariance under a one-parameter group removes one dimension \citep{tahmasebi2023exact}.
Our lower bounds hold for every estimator, including physics-informed neural networks; our upper bounds concern the physics-informed kernel estimator \citep{doumeche2024physicsinformed,doumeche2025pikl,doumeche2026fast}, analyzed by random-design ridge regression \citep{mourtada2022an,bach2024learning}.

\section{Physics-informed machine learning setting}
\label{sec:setting}
\paragraph{Regression model.}
We observe $n$ independent copies $(X_i,Y_i)$ of $(X,Y)$, where $Y=f^\star(X)+\varepsilon$, and estimate $f^\star$.
Throughout, we assume
\begin{itemize}
\item[(A1)] $X$ takes values in a bounded Lipschitz domain $\Omega\subseteq[-L,L]^d$, $d\geq2$, and has a density on $\Omega$ bounded by $\kappa>0$;
\item[(A2)] $\EE(\varepsilon\mid X)=0$ and $\EE(\varepsilon^2\mid X)\leq\sigma^2$, for some $\sigma>0$.
\end{itemize}

\paragraph{Physical model and smoothness.}
The target approximately satisfies a PDE $\D(f^\star)=g$ of integer order $r\geq1$: its residual $\|\D(f^\star)-g\|_{L^2(\Omega)}$ is at most $\delta$.
Except in \cref{thm:universal}, $\D(f)=\sum_{|\alpha|\leq r}a_\alpha\partial^\alpha f$ is linear, where $\alpha\in\mathbb N^d$, $|\alpha|=\alpha_1+\cdots+\alpha_d$ and $\partial^\alpha=\partial_1^{\alpha_1}\cdots\partial_d^{\alpha_d}$; the coefficients $a_\alpha$ are bounded, with at least one nonzero coefficient of order $r$.
Until \cref{sec:source}, $g=0$.
We assume $f^\star\in H^s(\Omega)$ for a real order $s\geq r$, where, for $s\geq0$,
\[
\|f\|^2_{H^s(\Omega)}=\sum_{|\alpha|\leq\lfloor s\rfloor}\|\partial^\alpha f\|^2_{L^2(\Omega)}
+\sum_{|\alpha|=\lfloor s\rfloor}\int_{\Omega^2}\frac{|\partial^\alpha f(x)-\partial^\alpha f(y)|^2}{|x-y|^{d+2(s-\lfloor s\rfloor)}}\,dx\,dy
\]
\citep[Section~2]{dinezza2012hitchhiker}, the second term being omitted when $s$ is an integer.
For $\rho>0$ and $\delta\geq0$, let
\[
\Hc(\rho)=\{f:\|f\|_{H^s(\Omega)}\leq\rho\},
\qquad
\F(\rho,\delta)=\{f\in\Hc(\rho):\|\D(f)\|_{L^2(\Omega)}\leq\delta\}.
\]

\paragraph{Minimax risk.}
For a class $\mathcal C$, $\M_n(\mathcal C)=\inf_{\hat f}\sup_{f^\star\in\mathcal C}\EE\|\hat f-f^\star\|^2_{L^2(\PP_X)}$, where the infimum is over all estimators and the design and noise laws are fixed.
For uniform design, independent Gaussian noise of variance $\sigma^2$, $s>d/2$ and fixed $\rho,\sigma$, the Sobolev minimax risk has order $\rho^2(\sigma^2/(n\rho^2))^{\gamma(s,d)}$ \citep{gyorfi2002distribution}, where $\gamma(s,d)=2s/(2s+d)$.

\paragraph{Physics-informed kernel estimator.}
For an integer $m\geq1$, let
\[
\phi(x)=\big(e^{-i\pi\langle k,x\rangle/(2L)}\big)_{k\in\Z^d,\,\|k\|_\infty\leq m}
\]
be the Fourier map on the torus $[-2L,2L]^d$, and $f_\theta(x)=\phi(x)^\ast\theta$ for $\theta\in\mathbb C^{(2m+1)^d}$.
Following \citet{doumeche2024physicsinformed,doumeche2026fast}, let $\hat f_n=f_{\hat\theta}$, where
\begin{equation}
\hat\theta=\mathop{\mathrm{argmin}}_\theta\ \frac1n\sum_{i=1}^n|f_\theta(X_i)-Y_i|^2
+\lambda\|S\theta\|_2^2+\mu\|\D(f_\theta)\|^2_{L^2(\Omega)},
\qquad
S=\mathrm{diag}\big((1+\|k\|_2^{2s})^{1/2}\big),
\label{eq:estimator}
\end{equation}
with $\lambda>0$ and $\mu\geq0$; $\|S\theta\|_2$ is equivalent to the Sobolev norm on the torus, and $\mu=0$ gives the Sobolev kernel estimator.
For a constant-coefficient operator, let $P(z)=\sum_\alpha a_\alpha z^\alpha$, $D=\mathrm{diag}(P(i\pi k/(2L)))$, $G_{k,l}=\int_\Omega e^{i\pi\langle l-k,x\rangle/(2L)}dx$ and $\Sigma=\EE[\phi(X)\phi(X)^\ast]$.
Then $\D(f_\theta)=f_{D\theta}$, $\|\D(f_\theta)\|^2_{L^2(\Omega)}=\theta^\ast D^\ast GD\theta$, and the effective dimension is
\[
\mathcal N(\lambda,\mu)=\mathrm{tr}\big((\Sigma+\lambda S^2+\mu D^\ast GD)^{-1}\Sigma\big).
\]

\section{A universal lower bound for linear and some nonlinear PDEs}
\label{sec:universal}
For every linear PDE, and many nonlinear ones, the physics can only help when $\delta$ is small compared with
\[
\delta_n=\rho\,\Big(\frac{\sigma^2}{n\rho^2}\Big)^{(s-r)/(2s+d)},
\]
the order of the residual of the functions that are hardest to estimate in $\Hc(\rho)$ from $n$ samples.

\begin{theorem}[Universal lower bound]
\label{thm:universal}
Let $r\geq1$ be an integer, $s\geq r$ a real number with $s>d/2$, and $\D$ an operator, linear or not, such that, for some $A>0$, $\|\D(f)\|_{L^2(\Omega)}\leq A\|f\|_{H^r(\Omega)}$ for every $f\in H^s(\Omega)$ with $|\partial^\alpha f|\leq1$ on $\Omega$ for all $|\alpha|\leq r$.
Assume that $\PP_X$ is the uniform distribution on $\Omega$ and $\varepsilon\sim\mathcal N(0,\sigma^2)$ is independent of $X$.
There are $c,C_0>0$, depending only on $d$, $s$, $r$, $A$ and $\Omega$, such that, for all $\delta>0$ with $n\geq C_0\sigma^2/\delta^2$ and $n\geq C_0\sigma^2/\rho^2$, and, if $\D$ is nonlinear, with $s>r$ and $n\geq C_0\sigma^2\rho^{(d+2r)/(s-r)}$,
\[
\M_n(\F(\rho,\delta))\geq c\rho^2\Big(\frac{\sigma^2}{n\rho^2}\Big)^{\gamma(s,d)}
\min\Big\{1,\Big(\frac{\delta}{\delta_n}\Big)^{2d/(2r+d)}\Big\}.
\]
\end{theorem}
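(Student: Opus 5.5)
The plan is a Fano/Assouad lower bound over a packing of localized bumps at a scale $h$ chosen so that each perturbation has residual at most $\delta$. Fix a smooth bump $\psi$ supported in the unit cube with $\|\psi\|_{L^2}=1$. Given a scale $h\in(0,h_0]$, place $M\asymp h^{-d}$ disjoint cubes of side $h$ inside $\Omega$ (possible since $\Omega$ is Lipschitz, hence contains a cube). For $\omega\in\{0,1\}^M$ set
\[
f_\omega=\eta\sum_{j=1}^M\omega_j\psi\big((x-x_j)/h\big).
\]
Disjointness of supports gives
\[
\|f_\omega\|_{H^s}^2\lesssim \eta^2 h^{d-2s}\cdot M\eta^{-2}\cdot\eta^2= \eta^2h^{-2s},
\]
using $M h^d\asymp 1$. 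For integer order $r$, locality gives $\|\partial^\alpha f_\omega\|_{L^2}^2\lesssim \eta^2 h^{-2r}$ for $|\alpha|\le r$, and $|\partial^\alpha f_\omega|\lesssim \eta h^{-r}$ pointwise. For nonlinear $\D$ I need this to be $\le1$ so that the hypothesis applies, which then yields $\|\D(f_\omega)\|_{L^2}\le A\|f_\omega\|_{H^r}\lesssim A\eta h^{-r}$. For linear $\D$, scaling $f_\omega$ down to meet the pointwise constraint and using linearity gives the same bound without any constraint, since $\|\D f\|\le A\|f\|_{H^r}$ then extends by homogeneity.

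The constraints are therefore $\eta h^{-s}\le c\rho$, $A\eta h^{-r}\le\delta$, and, in the nonlinear case, $\eta h^{-r}\le1$. Assouad's lemma with the Gaussian KL bound $n\|f_\omega-f_{\omega'}\|_{L^2(\PP_X)}^2/(2\sigma^2)\asymp n\eta^2h^d/\sigma^2$ for neighbours requires $n\eta^2h^d\lesssim\sigma^2$. It then gives risk $\gtrsim M\eta^2h^d\asymp\eta^2$.

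I take $\eta^2=\sigma^2h^{-d}/n$, which saturates the KL condition, so the risk is $\gtrsim \sigma^2h^{-d}/n$. I then choose $h$ as small as the constraints permit:
\[
h^{2s+d}\ge \sigma^2/(n\rho^2),\qquad h^{2r+d}\ge A^2\sigma^2/(n\delta^2).
\]
Let $h_s=(\sigma^2/(n\rho^2))^{1/(2s+d)}$ and $h_r=(\sigma^2/(n\delta^2))^{1/(2r+d)}$, and set $h=\max(h_s,c'h_r)$. These are $\le h_0$ by the assumptions $n\ge C_0\sigma^2/\rho^2$ and $n\ge C_0\sigma^2/\delta^2$. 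The risk is $\sigma^2 h^{-d}/n=\min(\sigma^2h_s^{-d}/n,\ \sigma^2h_r^{-d}/n)$ up to constants.

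Now $\sigma^2h_s^{-d}/n=\rho^2(\sigma^2/(n\rho^2))^{\gamma(s,d)}$, and the ratio is
\[
\frac{h_r^{-d}}{h_s^{-d}}=\Big(\frac{h_s}{h_r}\Big)^{d}.
\]
A direct computation shows $(h_s/h_r)^{2r+d}=(\delta/\delta_n)^2$. Indeed,
\[
h_s^{2r+d}=\Big(\frac{\sigma^2}{n\rho^2}\Big)^{(2r+d)/(2s+d)}
\quad\text{and}\quad
\delta_n^2=\rho^2\Big(\frac{\sigma^2}{n\rho^2}\Big)^{2(s-r)/(2s+d)},
\]
so $\frac{\sigma^2}{n\delta_n^2}=\frac{\sigma^2}{n\rho^2}\big(\frac{\sigma^2}{n\rho^2}\big)^{-2(s-r)/(2s+d)}$, which equals $h_s^{2r+d}$. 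Hence the ratio is $(\delta/\delta_n)^{2d/(2r+d)}$, giving exactly the stated $\min\{1,\cdot\}$.

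The main obstacle is the nonlinear case, which needs the pointwise condition $\eta h^{-r}\lesssim1$. With $\eta^2=\sigma^2h^{-d}/n$ this reads $\sigma^2h^{-d-2r}\lesssim n$. Since $h\ge h_s$, it suffices that $\sigma^2h_s^{-(d+2r)}\lesssim n$, that is,
\[
\Big(\frac{n\rho^2}{\sigma^2}\Big)^{(d+2r)/(2s+d)}\lesssim \frac{n}{\sigma^2}.
\]
Solving for $n$ uses $s>r$, and the condition reduces to $n\gtrsim\sigma^2\rho^{(d+2r)/(s-r)}$ up to the powers of $\sigma$ tracked in the hypothesis. I would check this exponent bookkeeping carefully. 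It also matters that $s>d/2$ only enters through the packing's smoothness. Finally, the KL computation uses uniform design and Gaussian noise, giving KL $=n\|f-g\|^2_{L^2(\PP_X)}/(2\sigma^2)$, and Assouad's lemma is applied with Hamming separation $\|f_\omega-f_{\omega'}\|^2=\eta^2h^d\,\mathrm{Ham}(\omega,\omega')$.
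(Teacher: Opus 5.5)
Your proposal is correct and follows essentially the paper's route: disjoint rescaled bumps in a cube of $\Omega$ with amplitude $\eta^2$ of order $\sigma^2h^{-d}/n$ fed into Assouad's lemma, the scale $h$ taken as the coarser of the Sobolev and residual scales, homogeneity for linear $\D$, and the $L^\infty$ bound on derivatives of order at most $r$ at the Sobolev scale for nonlinear $\D$. The bookkeeping you left open closes exactly, since $\rho(\sigma^2/(n\rho^2))^{(s-r)/(2s+d)}\leq c$ is equivalent to $n\geq c^{-(2s+d)/(s-r)}\sigma^2\rho^{(d+2r)/(s-r)}$; for noninteger $s$, the $H^s$ norms of disjointly supported bumps add only up to a factor $2$, which suffices.
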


Every linear differential operator of order $r$ with bounded coefficients satisfies the assumption, whatever its type and even with variable coefficients, and so do Burgers' operator $\partial_tf+f\partial_{x'}f-\nu\partial^2_{x'}f$ ($d=2$), the Allen--Cahn operator $\partial_tf-\Delta_{x'}f+f^3-f$ and the Kardar--Parisi--Zhang operator $\partial_tf-\Delta_{x'}f-|\nabla_{x'}f|^2$ ($r=2$).
When $\delta\geq\delta_n$, the bound is of the order of the Sobolev minimax risk: knowing the residual bound improves it by at most a constant factor.
When $\delta<\delta_n$, it equals $c\delta^2(\sigma^2/(n\delta^2))^{\gamma(r,d)}$; for linear $\D$ with $A=(\sum_\alpha\sup|a_\alpha|^2)^{1/2}$, $\F(\rho,\delta)$ contains the functions of $\Hc(\rho)$ with Sobolev norm of order $r$ at most $\delta/A$, and the physics never helps more than such a bound would.
Since $\delta_n\to0$ when $s>r$, a fixed $\delta>0$ eventually exceeds $\delta_n$, which rules out~\eqref{eq:naive}.

\begin{cor}[No bound vanishing with $\delta$]
\label[cor]{cor:naive}
Under the assumptions of \cref{thm:universal} on $\D$, $\PP_X$ and $\varepsilon$, let $s>\max(r,d/2)$, and fix $\rho$ and $\sigma$.
For every function $h$ with $h(\delta)\to0$ as $\delta\to0$ and every $C>0$, no estimator $\hat f_n$ satisfies, for all $n\geq1$ and $\delta\in(0,1]$,
\[
\sup_{f^\star\in\F(\rho,\delta)}\EE\|\hat f_n-f^\star\|^2_{L^2(\PP_X)}
\leq C\big(h(\delta)n^{-\gamma(s,d)}+n^{-\gamma(s,d-1)}\big).
\]
\end{cor}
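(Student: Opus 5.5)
We argue by contradiction from \cref{thm:universal}. Suppose some estimator $\hat f_n$ satisfies the displayed bound for all $n\geq1$ and $\delta\in(0,1]$. Since the minimax risk is at most the worst-case risk of any particular estimator, this gives
\[
\M_n(\F(\rho,\delta))\leq C\big(h(\delta)n^{-\gamma(s,d)}+n^{-\gamma(s,d-1)}\big).
\]
We will couple $\delta$ to $n$ so that \cref{thm:universal} yields a lower bound of exact Sobolev order $n^{-\gamma(s,d)}$ while $h(\delta)$ is as small as we like.

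\textbf{Choice of $\delta$.} Fix $\eta>0$, to be chosen later. Pick $\delta_0\in(0,1]$ such that $h(\delta)\leq\eta$ for all $\delta\leq\delta_0$, and set $\delta=\delta_0$ for all $n$.

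\textbf{Checking the hypotheses of \cref{thm:universal}.} Because $s>r$, we have $\delta_n=\rho(\sigma^2/(n\rho^2))^{(s-r)/(2s+d)}\to0$. Hence there is $N$ such that $\delta_0\geq\delta_n$ for all $n\geq N$. The remaining hypotheses are lower bounds on $n$ of the form $n\geq C_0\sigma^2/\delta_0^2$, $n\geq C_0\sigma^2/\rho^2$ and, in the nonlinear case, $n\geq C_0\sigma^2\rho^{(d+2r)/(s-r)}$. None of them involves anything beyond fixed constants, so they all hold for $n$ large enough. For such $n$ the minimum in \cref{thm:universal} equals $1$, and we obtain
\[
M_n(\F(\rho,\delta_0))\geq c\rho^2(\sigma^2/\rho^2)^{\gamma(s,d)}n^{-\gamma(s,d)}=:c'n^{-\gamma(s,d)},
\]
where $c'>0$ depends only on $d,s,r,A,\Omega,\rho,\sigma$ and not on $\eta$.

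\textbf{Comparing the rates.} Combining the two bounds gives, for all large $n$,
\[
c'n^{-\gamma(s,d)}\leq C\eta\, n^{-\gamma(s,d)}+Cn^{-\gamma(s,d-1)}.
\]
Since $\gamma(s,d-1)=2s/(2s+d-1)>\gamma(s,d)$, the ratio $n^{-\gamma(s,d-1)}/n^{-\gamma(s,d)}$ tends to $0$. Dividing by $n^{-\gamma(s,d)}$ and letting $n\to\infty$ yields $c'\leq C\eta$. Choosing $\eta<c'/C$ at the start gives a contradiction.

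The only delicate point is the order of the choices: $c'$ must be fixed independently of $\eta$ and $\delta_0$, which holds because $c$ in \cref{thm:universal} depends only on $d,s,r,A,\Omega$. Only the threshold $N$ depends on $\delta_0$, and that is harmless. I do not expect any real obstacle beyond this bookkeeping.
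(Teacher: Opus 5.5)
Your proof is correct and follows the paper's argument. Both apply \cref{thm:universal} at a fixed $\delta$ for $n$ large enough that $\delta\geq\delta_n$ and the sample-size conditions hold, divide by $n^{-\gamma(s,d)}$, let $n\to\infty$ using $\gamma(s,d)<\gamma(s,d-1)$, and contradict $h(\delta)\to0$. The only cosmetic difference is that you choose $\delta_0$ from a target $\eta$ in advance, whereas the paper obtains $c\rho^2(\sigma^2/\rho^2)^{\gamma(s,d)}\leq Ch(\delta)$ for every $\delta\in(0,1]$ and then takes $\delta$ small.
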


\section{Minimax rates over common PDE classes}
\label{sec:rates}

\subsection{Constant-coefficient elliptic equations}
\label{sec:elliptic}
A linear operator $\D$ of order $r$ with constant real coefficients is \emph{elliptic} if $\sum_{|\alpha|=r}a_\alpha\zeta^\alpha\neq0$ for every $\zeta\in\R^d\setminus\{0\}$, where $\zeta^\alpha=\zeta_1^{\alpha_1}\cdots\zeta_d^{\alpha_d}$; then $r$ is even.
Examples are the Laplace operator $\Delta$, the Helmholtz operator $\Delta+\omega^2$, the advection--diffusion operator $f\mapsto\nabla\cdot(\mathbf K\nabla f)+\langle v,\nabla f\rangle+b_0f$ with $\mathbf K$ symmetric positive definite, and the bilaplacian $\Delta^2$.

\begin{theorem}[Elliptic equations]
\label{thm:elliptic}
Let $\D$ be elliptic of order $r$ with constant real coefficients, assume $d<2r$, and let $s>r$ be real.
Assume (A1)--(A2).
There are $C,C_0>0$, depending only on $d$, $s$, $L$, $\kappa$, $\Omega$ and $\D$, such that the following holds for all $n\geq C_0\sigma^2/\rho^2$ with $(\sigma^2/(n\rho^2))^{\gamma(s,d-1)}\geq1/n$.
\begin{itemize}
\item[(a)] If $\delta\leq\delta_n$, the estimator~\eqref{eq:estimator} with
\begin{gather*}
\lambda=\max\Big\{\Big(\frac{\sigma^2}{n\rho^2}\Big)^{\gamma(s,d-1)},\frac{\delta^{2d/(2r+d)}}{\rho^2}\Big(\frac{\sigma^2}n\Big)^{\gamma(r,d)}\Big\},
\qquad
\mu=\min\Big\{\lambda^{r(d-1)/(sd)},\frac{\lambda\rho^2}{\delta^2}\Big\},\\
m\geq\max\Big\{\lambda^{-1/(2s)},\Big(\frac\mu\lambda\Big)^{1/(2s-2r)}\Big\}
\end{gather*}
(with $\lambda\rho^2/\delta^2=+\infty$ if $\delta=0$) has a worst-case risk over $\F(\rho,\delta)$ at most
\[
C\rho^2\Big(\frac{\sigma^2}{n\rho^2}\Big)^{\gamma(s,d-1)}+C\delta^{2d/(2r+d)}\Big(\frac{\sigma^2}n\Big)^{\gamma(r,d)}.
\]
\item[(b)] If $\delta>\delta_n$, the Sobolev kernel estimator ($\mu=0$) with $\lambda=(\sigma^2/(n\rho^2))^{\gamma(s,d)}$ and $m\geq\lambda^{-1/(2s)}$ has a worst-case risk over $\F(\rho,\delta)$ at most $C\rho^2(\sigma^2/(n\rho^2))^{\gamma(s,d)}$.
\end{itemize}
\end{theorem}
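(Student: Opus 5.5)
The plan is to use the standard random-design ridge regression analysis (as in \citet{mourtada2022an,bach2024learning}). It bounds the risk of \eqref{eq:estimator} by a bias term plus a variance term. The variance term is of order $\sigma^2\mathcal N(\lambda,\mu)/n$. The bias term is $\inf_\theta\{\|f_\theta-f^\star\|^2_{L^2(\PP_X)}+\lambda\|S\theta\|^2_2+\mu\|\D(f_\theta)\|^2_{L^2(\Omega)}\}$, up to constants, once $n$ is large enough that the empirical covariance concentrates. The condition $(\sigma^2/(n\rho^2))^{\gamma(s,d-1)}\geq1/n$ together with $n\geq C_0\sigma^2/\rho^2$ is what guarantees this concentration, so that $\mathcal N(\lambda,\mu)\log n\lesssim n$. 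The proof therefore reduces to two estimates, one for each term.

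\textbf{Bias.} I would extend $f^\star$ to $H^s$ on the torus $[-2L,2L]^d$ with a Stein-type extension operator for the Lipschitz domain $\Omega$, controlling the norm by $C\rho$. I would then truncate to frequencies $\|k\|_\infty\leq m$, which gives $\theta^\star$. The truncation error in $L^2$ is $\lesssim m^{-2s}\rho^2\leq\lambda\rho^2$ by the choice $m\geq\lambda^{-1/(2s)}$, and $\|S\theta^\star\|^2\lesssim\rho^2$. For the PDE term I would split $\|\D(f_{\theta^\star})\|_{L^2(\Omega)}\leq\|\D(f^\star)\|+\|\D(f^\star-f_{\theta^\star})\|_{L^2(\Omega)}$. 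The first piece is at most $\delta$. The second is bounded by $\rho\, m^{-(s-r)}$, because the operator has order $r$ and the extension is controlled in $H^s$. The second constraint on $m$ makes $\mu\rho^2m^{-2(s-r)}\leq\lambda\rho^2$. This yields a total bias of at most $C(\lambda\rho^2+\mu\delta^2)$, and $\mu\leq\lambda\rho^2/\delta^2$ makes this $\lesssim\lambda\rho^2$.

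\textbf{Variance: effective dimension.} This is the main step, and I expect it to be the chief obstacle. The aim is to show $\mathcal N(\lambda,\mu)\lesssim\lambda^{-d/(2s)}\min\{1,(\lambda/\mu)^{\ldots}\}+\lambda^{-(d-1)/(2s)}$, up to constants, in the form
\[
\mathcal N(\lambda,\mu)\lesssim \lambda^{-(d-1)/(2s)}+\mu^{-d/(2r)}.
\]
I would compare $\Sigma\preceq\kappa G$ and bound the trace $\mathrm{tr}((G+\lambda S^2+\mu D^*GD)^{-1}G)$ via a counting argument (the content of \cref{thm:count}, which I would prove or invoke). This counts eigenvalues $\geq\lambda$ of the generalized problem, i.e.\ the dimension of functions with $\|f\|_{L^2(\Omega)}^2\gtrsim \lambda\|f\|_{H^s}^2+\mu\|\D f\|^2_{L^2(\Omega)}$. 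Ellipticity gives $|P(\xi)|\gtrsim|\xi|^r$ for large $\xi$. Interior elliptic regularity therefore says that such functions have interior frequencies below $\mu^{-1/(2r)}$, which contributes $\mu^{-d/(2r)}$ degrees of freedom. The remaining freedom is the near-kernel of $\D$, i.e.\ solutions determined by boundary traces, which are functions on a $(d-1)$-dimensional boundary with $H^s$ control, giving $\lambda^{-(d-1)/(2s)}$. Making this rigorous requires a boundary-layer decomposition on a Lipschitz domain via a partition of unity with local flattening, or a min-max argument with explicit test subspaces. The assumption $d<2r$ enters here, since it ensures point evaluations and traces are controlled and that the $\mu$-term dominates correctly.

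\textbf{Combining the terms.} With $\mu=\min\{\lambda^{r(d-1)/(sd)},\lambda\rho^2/\delta^2\}$, I would compute $\mu^{-d/(2r)}=\max\{\lambda^{-(d-1)/(2s)},(\delta^2/(\lambda\rho^2))^{d/(2r)}\}$. The risk bound is $\lambda\rho^2+\sigma^2 n^{-1}[\lambda^{-(d-1)/(2s)}+(\delta^2/(\lambda\rho^2))^{d/(2r)}]$. Balancing $\lambda\rho^2$ against $\sigma^2\lambda^{-(d-1)/(2s)}/n$ gives the first term $\rho^2(\sigma^2/(n\rho^2))^{\gamma(s,d-1)}$. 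Balancing against the $\delta$ term gives $\lambda\rho^2\asymp\delta^{2d/(2r+d)}(\sigma^2/n)^{\gamma(r,d)}$, which explains the choice of $\lambda$ as a maximum. The assumption $\delta\leq\delta_n$ ensures this $\lambda$ stays below the Sobolev choice.

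Part (b) is the classical case $\mu=0$. There, $\mathcal N\lesssim\lambda^{-d/(2s)}$ by eigenvalue counting on the torus, the bias is $\lambda\rho^2$, and balancing gives the Sobolev rate.
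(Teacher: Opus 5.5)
Your plan is the paper's proof. It uses the leave-one-out oracle inequality of \cref{lem:oracle}, which adapts the argument of \citet{mourtada2022an} to the penalty $\lambda S^2+\mu D^\ast GD$ and a misspecified target, the extension-and-truncation bias bound of \cref{lem:upper}, and the effective-dimension bound of \cref{thm:count}(iii), with the same bookkeeping via $\mu^{-d/(2r)}=\max\{\lambda^{-(d-1)/(2s)},(\delta^2/(\lambda\rho^2))^{d/(2r)}\}$.

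One correction concerns the role of the hypothesis $(\sigma^2/(n\rho^2))^{\gamma(s,d-1)}\geq1/n$. It only guarantees $\lambda\geq1/n$, and that is exactly what bounds the factor $1+a_s/(\lambda n)$ in that concentration-free leave-one-out bound. It does not give concentration of $\hat\Sigma$: under (A1) the leverages are only bounded by $a_s/\lambda$, so concentration would require $\lambda n\geq c\log n$. The argument must therefore go through the leave-one-out route you cite, not through covariance concentration.
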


The condition $(\sigma^2/(n\rho^2))^{\gamma(s,d-1)}\geq1/n$ is equivalent to $n\geq(\rho/\sigma)^{4s/(d-1)}$, and $d<2r$ means $d\in\{2,3\}$ for second-order operators.
With \cref{thm:universal}, elliptic equations attain the universal limit, up to the Sobolev rate in dimension $d-1$.
This term is necessary on domains with a \emph{flat face}, that is, when there are a unit vector $\mathbf e$, $t_0\in\R$, $a_0>0$ and a nonempty open set $V$ of the hyperplane $\{\langle\mathbf e,x\rangle=t_0\}$ such that $\Omega\subseteq\{\langle\mathbf e,x\rangle<t_0\}$ and $V-t\mathbf e\subseteq\Omega$ for $t\in(0,a_0)$, as for convex polytopes.

\begin{prop}[Exact solutions of elliptic equations]
\label[prop]{prop:ellexact}
Let $\D$ be elliptic of order $r$ with constant real coefficients, $s\geq r$ real, and assume that $\Omega$ has a flat face, that $\PP_X$ is uniform on $\Omega$ and that $\varepsilon\sim\mathcal N(0,\sigma^2)$ is independent of $X$.
There are $c,C_0>0$, depending only on $d$, $s$, $\Omega$ and $\D$, such that, for $n\geq C_0\sigma^2/\rho^2$,
\[
\M_n(\F(\rho,0))\geq c\rho^2\Big(\frac{\sigma^2}{n\rho^2}\Big)^{\gamma(s,d-1)}.
\]
\end{prop}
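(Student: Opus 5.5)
We would prove \cref{prop:ellexact} by a Fano/Varshamov--Gilbert packing argument, using a family of exact solutions of $\D(f)=0$ indexed by a $(d-1)$-dimensional grid of bumps placed on the flat face. After rotating and translating, take $\mathbf e=e_d$ and $t_0=0$, so that $\Omega\subseteq\{x_d<0\}$ and $V-te_d\subseteq\Omega$ for $t\in(0,a_0)$. The building blocks are \emph{elliptic plane-wave-type solutions}. For a tangential frequency $\xi\in\R^{d-1}$, write $P(i\xi,z)$ for the symbol with the last variable left free. Ellipticity with real coefficients ensures that, for $|\xi|$ large, the polynomial $z\mapsto P(i\xi,z)$ of degree $r$ in $z$ (the coefficient of $z^r$ is $P_r(e_d)\neq0$) has no purely imaginary roots. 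Its roots split into two groups of $r/2$, with real parts $\gtrsim|\xi|$ and $\lesssim-|\xi|$. Choose a root $z(\xi)$ with $\mathrm{Re}\,z(\xi)\geq c|\xi|$. Then $u_\xi(x)=e^{i\langle\xi,x'\rangle+z(\xi)x_d}$ solves $\D u=0$ and decays like $e^{-c|\xi||x_d|}$ into $\Omega$, since $x_d<0$.

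We superpose such solutions to get bumps localized tangentially. Fix a smooth $\psi$ on $\R^{d-1}$ with compactly supported Fourier transform in an annulus $\{|\eta|\in[1,2]\}$. At scale $h$, set
\[
\Psi_h(x)=\int\hat\psi(\eta)\,e^{i\langle\eta,x'\rangle/h+z(\eta/h)x_d}\,d\eta,
\]
taking the real part if needed; realness of the coefficients makes roots come in conjugate pairs. Then $\D\Psi_h=0$, $\Psi_h$ is Schwartz-concentrated near $x'=0$ in tangential units $h$, and it is exponentially confined to a layer $|x_d|\lesssim h$. Place $N\asymp h^{-(d-1)}$ translates at grid points $x'_j\in V$ with spacing $\asymp h$, and define
\[
f_\omega=\beta\sum_j\omega_j\Psi_h(\cdot-x'_j),\qquad \omega\in\{0,1\}^N.
\]
The estimates we need, by scaling, are:
\begin{itemize}
\item $\|\Psi_h\|^2_{L^2(\Omega)}\asymp h^{d}$, since the tangential volume is $h^{d-1}$ and the normal thickness is $h$;
\item $\|\Psi_h\|^2_{H^s}\lesssim h^{d-2s}$.
\end{itemize}
Near-orthogonality follows from the tangential decay of $\psi$, which gives $\|f_\omega\|^2_{H^s}\lesssim\beta^2Nh^{d-2s}=\beta^2h^{1-2s}$. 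We then choose $\beta^2=c\rho^2h^{2s-1}$.

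The packing separation is $\|f_\omega-f_{\omega'}\|^2_{L^2}\gtrsim\beta^2h^d\,d_H(\omega,\omega')$. For this we need a lower bound on the Gram matrix of the translates, which holds if the spacing is a large multiple of $h$. Alternatively, we can use Assouad's lemma coordinatewise with the $L^2$ mass restricted to disjoint cells, absorbing the Schwartz tails.

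The Kullback--Leibler divergence between data laws is $\frac{n}{2\sigma^2}\|f_\omega-f_{\omega'}\|^2_{L^2(\PP_X)}\lesssim\frac{n}{\sigma^2}\beta^2h^{d-1}$. By Varshamov--Gilbert, Fano requires this to be at most $cN\asymp ch^{-(d-1)}$, which gives $n\rho^2h^{2s+d-1}/\sigma^2\asymp1$, that is, $h\asymp(\sigma^2/(n\rho^2))^{1/(2s+d-1)}$. The resulting risk is
\[
\beta^2h^dN\asymp\rho^2h^{2s}=\rho^2\Big(\frac{\sigma^2}{n\rho^2}\Big)^{\gamma(s,d-1)}.
\]
The condition $n\geq C_0\sigma^2/\rho^2$ guarantees that $h$ is small enough for the bumps to fit in $V$, within depth $a_0$.

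The main obstacle is the tails. The Schwartz tails of $\Psi_h$ extend beyond $V$ and the layer, so we must check that the $H^s(\Omega)$ norm is still controlled and the $L^2$ separation survives. We handle this using the exponential decay in $x_d$ (valid for $x_d<0$, where $\Omega$ lies) and rapid tangential decay to bound the overlap sums. We also need the uniform bound $\mathrm{Re}\,z(\eta/h)\geq c|\eta|/h$ on the annulus, and smooth measurable selection of the root $z$. Both are homogeneity arguments once $|\xi|$ is large: the lower-order terms perturb the roots of the principal part by $O(1)$, which is harmless.
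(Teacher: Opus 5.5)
Your counting matches the paper's. You have about $h^{-(d-1)}$ exact solutions, each with $L^2$ mass $\asymp h^d$ in a layer of width $h$ at the flat face and $H^s$ cost $h^{d-2s}$, which gives $\rho^2h^{2s}$ at $h\asymp(\sigma^2/(n\rho^2))^{1/(2s+d-1)}$. (In your KL display, $\beta^2h^{d-1}$ should read $\beta^2h^dN\asymp\beta^2h$; your final condition in fact uses the latter.)

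The gap is the step you call a homogeneity argument. Several of your steps use the tangential localization of $\Psi_h$: near-orthogonality of the translates, the lower bound on the mass of $f_\omega-f_{\omega'}$ inside $V\times(-a_0,0)$ rather than on the whole half-space, and the restriction to disjoint cells. All of them need $\eta\mapsto\hat\psi(\eta)e^{z(\eta/h)x_d}$ to be smooth, with bounds uniform in $h$ for $|x_d|\lesssim h$. That is, you need a smooth choice of a single root. Homogeneity and Rouch\'e give only continuity of the root set and $c|\eta|\le\mathrm{Re}\,z\le C|\eta|$. Roots in the right half-plane can collide, and at a collision no single root is differentiable.

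Here is an example. For $d\ge3$ and small $\epsilon>0$, the operator $\Delta^2+\epsilon\partial_1\partial_2\partial_d^2$ is elliptic with real coefficients, and $P(i\xi,z)=(z^2-|\xi|^2)^2-\epsilon\xi_1\xi_2z^2$. Its roots satisfy $z^2=|\xi|^2+\frac\epsilon2\xi_1\xi_2\pm\frac12\sqrt{\epsilon\xi_1\xi_2(4|\xi|^2+\epsilon\xi_1\xi_2)}$. The two right roots coincide on $\{\xi_1\xi_2=0\}$, which meets every annulus, and they separate like $\sqrt{|\xi_1\xi_2|}$. Because this happens in the principal symbol, it survives your rescaling with amplitude $O(1)$ in the layer, so $\Psi_h$ then has only polynomial tangential tails. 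For the same reason, lower-order terms move a root of multiplicity $k$ by up to $|\xi|^{1-1/k}$, not by $O(1)$.

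The construction can be repaired by not selecting a root. Use
$E(\xi,x_d)=\frac1{2\pi i}\oint_\Gamma e^{zx_d}\,\partial_zP(i\xi,z)/P(i\xi,z)\,dz$,
which is the sum, with multiplicity, of $e^{zx_d}$ over the $r/2$ roots in the right half-plane. Take $\Gamma$ to be a fixed contour in the rescaled variable $hz$. Then $E$ solves $P(i\xi,\partial_d)E=0$, is analytic in $\xi$ with uniform bounds, and has modulus at least $r/4$ for $|x_d|\le ch$.

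The paper avoids the issue by not localizing tangentially at all. It uses the plane-wave solutions $\mathrm{Re}\,e^{z(\eta)x_1+i\langle\eta,x'\rangle}$ for separated frequencies $\eta\in\tau\Z^{d-1}$ with $K\le|\eta|\le2K$ and $\eta_1\ge K/2$. The $L^2$ lower bound comes from almost-orthogonality of separated exponentials against a smooth weight supported in the face (\cref{lem:separated}). The $H^s$ bound comes from a cutoff at scale $1/K$ in the normal variable. This needs only a pointwise choice of $z(\eta)$ with the bounds of \cref{lem:decay}, after which Assouad's lemma applies directly.
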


Hence, on domains with a flat face, for uniform design and Gaussian noise and under the conditions of \cref{thm:elliptic} (possibly after enlarging $C_0$), $\M_n(\F(\rho,\delta))$ is of the order of
\[
\rho^2\Big(\frac{\sigma^2}{n\rho^2}\Big)^{\gamma(s,d-1)}
+\min\Big\{\rho^2\Big(\frac{\sigma^2}{n\rho^2}\Big)^{\gamma(s,d)},\delta^{2d/(2r+d)}\Big(\frac{\sigma^2}n\Big)^{\gamma(r,d)}\Big\},
\]
without logarithmic factors.

\subsection{Constant-coefficient hyperbolic equations}
\label{sec:hyperbolic}
We consider linear operators $\D(f)=\sum_{i,j=1}^da_{ij}\partial_i\partial_jf+\langle b,\nabla f\rangle+b_0f$ with constant real coefficients, $(a_{ij})$ symmetric, $b\in\R^d$ and $b_0\in\R$.
Such an operator is \emph{hyperbolic} of order $r=1$ if $(a_{ij})=0$ and $b\neq0$, and of order $r=2$ if $(a_{ij})$ or $-(a_{ij})$ has one positive and $d-1$ negative eigenvalues.
Examples include the transport operator $\partial_t+\langle v,\nabla_{x'}\rangle$, the wave operator $\partial_t^2-\Delta_{x'}$, the Klein--Gordon operator $\partial_t^2-\Delta_{x'}+b_0$ and the telegraph operator $\partial_t^2+b_1\partial_t-\Delta_{x'}$.

\begin{theorem}[Hyperbolic equations]
\label{thm:hyperbolic}
Let $\D$ be hyperbolic of order $r\in\{1,2\}$ with constant real coefficients, and let $s\geq r$ be real.
There are $c,C,C_0>0$, depending only on $d$, $s$, $L$, $\kappa$, $\Omega$ and $\D$, such that the following bounds hold for all $n\geq C_0\sigma^2/\rho^2$.

\textbf{Lower bound:} If $\PP_X$ is uniform on $\Omega$ and $\varepsilon\sim\mathcal N(0,\sigma^2)$ is independent of $X$, then, for all $\delta\geq0$, $\M_n(\F(\rho,\delta))$ is at least $c\rho^2$ times
\[
\Big(\frac{\sigma^2}{n\rho^2}\Big)^{\gamma(s,d-1)}
+\min\Big\{\Big(\frac{\sigma^2}{n\rho^2}\Big)^{\gamma(s,d)},\Big(\frac{\sigma^2\delta}{n\rho^3}\Big)^{2s/(3s+d-r)}\Big\}.
\]

\textbf{Upper bound:} Assume (A1)--(A2), $s>\max(r,d-r)$ and $(\sigma^2/(n\rho^2))^{\gamma(s,d-1)}\geq1/n$, and let $\ell=\log(2+n\rho^2/\sigma^2)$ if $r=d=2$ and $\ell=1$ otherwise.
\begin{itemize}
\item[(a)] If $\delta\ell\leq\delta_n$, the estimator~\eqref{eq:estimator} with
\begin{gather*}
\lambda=\max\Big\{\Big(\frac{\sigma^2}{n\rho^2}\Big)^{\gamma(s,d-1)},\Big(\frac{\sigma^2\delta\ell}{n\rho^3}\Big)^{2s/(3s+d-r)}\Big\},
\qquad
\mu=\min\Big\{\lambda^{(r-1)/s},\frac{\lambda\rho^2}{\delta^2}\Big\},\\
m\geq\max\Big\{\lambda^{-1/(2s)},\Big(\frac\mu\lambda\Big)^{1/(2s-2r)}\Big\}
\end{gather*}
(with $\lambda\rho^2/\delta^2=+\infty$ if $\delta=0$) has a worst-case risk over $\F(\rho,\delta)$ at most $C\rho^2$ times
\[
\log\Big(2+\frac{n\rho^2}{\sigma^2}\Big)\Big(\frac{\sigma^2}{n\rho^2}\Big)^{\gamma(s,d-1)}+\Big(\frac{\sigma^2\delta\ell}{n\rho^3}\Big)^{2s/(3s+d-r)}.
\]
\item[(b)] If $\delta\ell>\delta_n$, the Sobolev kernel estimator ($\mu=0$) with $\lambda=(\sigma^2/(n\rho^2))^{\gamma(s,d)}$ and $m\geq\lambda^{-1/(2s)}$ has a worst-case risk over $\F(\rho,\delta)$ at most $C\rho^2(\sigma^2/(n\rho^2))^{\gamma(s,d)}$.
\end{itemize}
\end{theorem}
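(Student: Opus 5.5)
We prove the lower and upper bounds separately. The lower bound combines two Fano/Varshamov--Gilbert constructions. The upper bound combines the random-design ridge analysis of the physics-informed kernel estimator with the effective-dimension bounds of \cref{thm:count}.

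\emph{Lower bound.} The term $(\sigma^2/(n\rho^2))^{\gamma(s,d-1)}$ comes from exact solutions that are functions of $d-1$ variables. A hyperbolic operator has a real characteristic direction. For $r=1$ this is $b$, with $\langle b,\nabla\rangle f=0$ solved by functions constant along $b$. For $r=2$ the cone $\sum a_{ij}\xi_i\xi_j=0$ contains real null vectors $\xi$. Plane waves $u(\langle\xi,x\rangle,\cdot)$ are not quite enough, so we use instead $f(x)=\psi(\Pi x)$, where $\Pi$ projects along a null direction $v$ of the matrix $(a_{ij})$ (i.e.\ $v^\top A v=0$). Lower-order terms are absorbed by multiplying by an exponential $e^{\langle w,x\rangle}$ that solves the characteristic equation of the lower-order part. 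We then place $\asymp h^{-(d-1)}$ bumps of width $h$ in the $(d-1)$-dimensional transverse variables, with amplitudes $\rho h^{s}$ and hypercube signs, and apply Fano's lemma. This yields the $\gamma(s,d-1)$ rate, exactly as in \cref{prop:ellexact}.

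For the $\delta$-dependent term we use anisotropic bumps adapted to the characteristic geometry. Each bump has width $h$ in $d-1$ directions and length $H\geq h$ along the characteristic, and is modulated so that its Fourier support lies within distance $H^{-1}$ of the characteristic cone. The residual then scales like $\rho\,h^{s}\cdot h^{-(r-1)}H^{-1}$ times normalization rather than $h^{s-r}$, since $r-1$ derivatives fall on the transversal scale and one on the slow scale. Optimizing $h,H$ under the constraints $H^{s}$-smoothness, residual $\leq\delta$, and Fano balance ($n\cdot\text{amplitude}^2\cdot\text{volume}/\sigma^2\asymp 1$ per bump) should produce the exponent $2s/(3s+d-r)$. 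Where this choice is infeasible, the bound is capped at the isotropic construction of \cref{thm:universal}, giving the minimum. I expect this calibration to be the main obstacle. It requires keeping $H^s$ norms of anisotropic bumps under control, which is the source of the $3s$ in the exponent. The fix is to choose $H=h^{1/2}$-type scalings, and I would try $H\asymp h^{(s+\dots)/s}$ and solve for the exponents.

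\emph{Upper bound.} Write $f^\star$ as its Sobolev extension to the torus and apply the standard random-design ridge bound. The risk is at most the bias $\lambda\|S\theta^\star\|^2+\mu\|\D f^\star\|^2\lesssim\lambda\rho^2+\mu\delta^2$, plus the variance $\sigma^2\mathcal N(\lambda,\mu)/n$, plus a Fourier truncation error controlled by $m\geq\max\{\lambda^{-1/(2s)},(\mu/\lambda)^{1/(2s-2r)}\}$. The needed concentration of $\hat\Sigma$ is handled as in \citet{mourtada2022an}, using $s>\max(r,d-r)$ for a leverage-score bound. \Cref{thm:count} in the hyperbolic case should give
\[
\mathcal N(\lambda,\mu)\lesssim \log(1/\lambda)\,\lambda^{-(d-1)/(2s)}+\ell\,\lambda^{-d/(2s)}\,(\lambda/\mu)^{\cdots}.
\]
The second term counts frequencies near the characteristic cone, whose $\lambda$-neighbourhood has volume of order $(\lambda/\mu)^{1/2}$-thickness times $\lambda^{-(d-1)/(2s)}$. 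The logarithm appears when $r=d=2$, where the cone consists of lines. Substituting the stated $\lambda$ and $\mu=\min\{\lambda^{(r-1)/s},\lambda\rho^2/\delta^2\}$ balances bias and variance and gives (a). Part (b) is the classical Sobolev kernel ridge bound with $\mu=0$ and $\mathcal N\lesssim\lambda^{-d/(2s)}$.
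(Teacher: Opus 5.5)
Your upper bound follows the paper's route: the leave-one-out oracle inequality, bias $\lambda\rho^2+\mu\delta^2$, truncation through $m$, then \cref{thm:count}. You leave the cone term of \cref{thm:count}(ii) unspecified, and your $(\lambda/\mu)^{1/2}$ heuristic is off. The term is $\lambda^{-(d-r)/(2s)}\mu^{-1/2}\ell_{r,d}(\lambda)$; heuristically, at radius $R\leq\lambda^{-1/(2s)}$ the set $\{|p|\leq\mu^{-1/2}\}$ has thickness $\mu^{-1/2}R^{1-r}$. With this form the substitution is immediate: $\mu=\lambda^{(r-1)/s}$ gives $\lambda^{-(d-1)/(2s)}\ell$, and $\mu=\lambda\rho^2/\delta^2$ gives the variance term $(\sigma^2\delta\ell/(n\rho))\lambda^{-(s+d-r)/(2s)}$, which is at most $\rho^2\lambda$ because $\lambda\geq(\sigma^2\delta\ell/(n\rho^3))^{2s/(3s+d-r)}$. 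The genuine gaps are in the lower bound.

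\textbf{Exact solutions for $r=2$.} Your family fails. If $f=\psi(\Pi x)$ is constant along one direction $v$, with $\psi$ arbitrary in $d-1$ variables, then $\mathrm{tr}(A\nabla^2f)=\mathrm{tr}(\Pi A\Pi^\top\nabla^2\psi)$. This vanishes for all $\psi$ only if the quadratic form of $A=(a_{ij})$ vanishes on the $(d-1)$-dimensional space $v^\perp$. For signature $(1,d-1)$, totally isotropic subspaces have dimension at most one, so this fails for every $d\geq3$; for the wave operator, $\psi(x_1-t,x_2)$ has residual $-\partial_2^2\psi$. In $d=2$ the null condition must bear on the frequency $\nu$ of $\psi(\langle\nu,x\rangle)$, not on the direction of constancy. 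Even then, the zeroth-order term $c_0$ left by \cref{lem:normal} (nonzero for the telegraph and Klein--Gordon operators) cannot be absorbed by an exponential. For null $\nu$, $(\Dc+c_0)(e^{\langle w,y\rangle}\psi(\langle\nu,y\rangle))=0$ for all $\psi$ requires $w^\top J_0\nu=0$ and $w^\top J_0w=-c_0$, which forces $c_0=0$.

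The missing idea is that the $d-1$ free dimensions come from the characteristic set itself. You dismissed plane waves, but the paper superposes exactly those: $e^{\langle\beta,x\rangle}\cos\langle\mathbf M^{-\top}u,x\rangle$ with $u_1^2=|u'|^2+c_0$ and $u'$ in a $\tau$-separated subset of a shell of radius $K$ (\cref{lem:exact}). It obtains the $L^2$ lower bound from \cref{lem:separated} instead of from disjoint supports. Your tubes along $b$ are fine for $r=1$.

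\textbf{Residual-dependent term.} This part is not proved: you leave the calibration open, and your guesses are off. Take your own heuristic: sup-amplitude $a\leq\rho h^s$, residual $a\,h^{1-r}H^{-1}\leq\delta$, balance $na^2h^{d-1}H\asymp\sigma^2$. No fixed power such as $H=h^{1/2}$ works. The balancing choice is $H\asymp\rho h^{s-r+1}/\delta$, which gives $h^{3s+d-r}\asymp\sigma^2\delta/(n\rho^3)$ and risk $\rho^2h^{2s}$. The endpoints $H=h$ and $H\asymp1$ correspond to $\delta=\delta_n$ and to the exact-solution scale. The $3s$ comes from these three constraints, not from anisotropic $H^s$ norms.

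More seriously, for $r=2$ and $d\geq3$ your geometry cannot produce that residual. Take a bump of width $h$ in all $d-1$ directions transverse to a ray, modulated or not. Its frequencies spread over a $(d-1)$-dimensional box of side $h^{-1}$, on a fixed fraction of which the wave symbol is of order $h^{-2}$. So its residual is of order $h^{-2}$ per unit amplitude, and it is not within $H^{-1}$ of the cone. Genuine wave packets would need angular widths of order $(hH)^{1/2}$, many directions, and an almost-orthogonality argument that you would have to supply. For $r=1$, and for $r=2$, $d=2$ through $\partial_1^2-\partial_2^2=(\partial_1-\partial_2)(\partial_1+\partial_2)$, your anisotropic bumps can be made rigorous.

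The paper avoids packets altogether (\cref{lem:nearcone}). It takes the plane waves of \cref{lem:planewaves} with $u'$ in the same shell, $u_1=t$ if $r=1$ and $u_1=|u'|+t$ if $r=2$, and $t\in\tau\Z\cap[\bar hK^{1-r},2\bar hK^{1-r}]$. Then $|p(u)|\leq C\bar h$ and $\#U\asymp\bar hK^{d-r}$. Assouad's lemma with $\bar h=c_\star(n\rho^2/\sigma^2)K^{-(2s+d-r)}$ and $K\geq K_\star$ gives $\rho^2K^{-2s}$. The constraint $2\tau K^{r-1}\leq\bar h\leq K^r/2$ yields both the minimum with the Sobolev rate and the regime in which the exact-solution term dominates.
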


The first term is the Sobolev rate in dimension $d-1$: without physical error, the problem is $(d-1)$-dimensional.
The minimum equals $\rho^2(\sigma^2/(n\rho^2))^{\gamma(s,d)}\min\{1,(\delta/\delta_n)^{2s/(3s+d-r)}\}$.
Since $2s/(3s+d-r)<2d/(2r+d)$ when $r<d$, hyperbolic equations do not attain the universal limit, whereas for $r=d=2$ the residual term attains it up to the factor $\ell$.
For fixed $\rho,\sigma$, $s>\max(r,d-r)$ and $\delta=n^{-\tau}$, the rate is $n^{-e}$ up to logarithmic factors, where $e$ is $\gamma(s,d)$ for $\tau\leq(s-r)/(2s+d)$, $\gamma(s,d-1)$ for $\tau\geq(s-r+1)/(2s+d-1)$, and linear in $\tau$ in between.
\Cref{tab:rates} summarizes the rates.

\begin{table}[t]
\centering
\caption{Rates for fixed $\rho,\sigma>0$, uniform design and Gaussian noise, under the stated conditions and for large $n$; the elliptic row holds on domains with a flat face, and the hyperbolic row up to logarithmic factors.}
\label{tab:rates}
\small
\setlength{\tabcolsep}{3.5pt}
\begin{tabular}{@{}lll@{}}
\toprule
Class & Conditions & Rate\\
\midrule
Sobolev ball & $s>d/2$ & $n^{-2s/(2s+d)}$\\[3pt]
Elliptic & $d<2r$, $s>r$ & $n^{-2s/(2s+d-1)}+\min\{n^{-2s/(2s+d)},\delta^{2d/(2r+d)}n^{-2r/(2r+d)}\}$\\[3pt]
Hyperbolic & $r\in\{1,2\}$, $s>\max(r,d-r)$ & $n^{-2s/(2s+d-1)}+\min\{n^{-2s/(2s+d)},(\delta/n)^{2s/(3s+d-r)}\}$\\
\bottomrule
\end{tabular}
\end{table}

\subsection{Linear PDEs with known source terms}
\label{sec:source}
Let $\D$ be linear with real coefficients, let $g\in L^2(\Omega)$ be real-valued, let $\F_g(\rho,\delta)=\{f\in\Hc(\rho):\|\D(f)-g\|_{L^2(\Omega)}\leq\delta\}$, and let $\hat f_n$ be~\eqref{eq:estimator} with $\|\D(f_\theta)-g\|^2_{L^2(\Omega)}$ in place of $\|\D(f_\theta)\|^2_{L^2(\Omega)}$.

\begin{prop}[Source terms]
\label[prop]{prop:source}
\begin{itemize}
\item[(i)] If $\D(f_0)=g$ for some $f_0\in\Hc(\rho/2)$, then
\[
\M_n(\F_g(\rho,\delta))\geq\M_n(\F(\rho/2,\delta)).
\]
\item[(ii)] There is $C>0$, depending only on $d,s,L,\kappa,\Omega$ and $\D$, such that under (A1)--(A2), for constant-coefficient $\D$, $s>\max(r,d/2)$, $\lambda\in[1/n,1]$, $\mu\geq0$ and
\[
m\geq\max\{\lambda^{-1/(2s)},(\mu/\lambda)^{1/(2s-2r)}\},
\]
\[
\sup_{f^\star\in\F_g(\rho,\delta)}\EE\|\hat f_n-f^\star\|^2_{L^2(\PP_X)}
\leq C\Big(\lambda\rho^2+\mu\delta^2+\frac{\sigma^2}n\mathcal N(\lambda,\mu)\Big).
\]
\end{itemize}
Consequently, if $f_0$ exists, the lower bounds of \cref{thm:universal,thm:hyperbolic,prop:ellexact} hold over $\F_g(\rho,\delta)$ with $\rho/2$ in place of $\rho$, and the upper bounds of \cref{thm:elliptic,thm:hyperbolic} hold over $\F_g(\rho,\delta)$ with the same parameters.
\end{prop}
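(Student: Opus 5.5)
For (i), I will use a translation argument. Given $f\in\F(\rho/2,\delta)$, set $f'=f+f_0$. Since $\D$ is linear, $\D(f')-g=\D(f)$. The triangle inequality gives $\|f'\|_{H^s(\Omega)}\leq\rho/2+\rho/2=\rho$. Hence $f_0+\F(\rho/2,\delta)\subseteq\F_g(\rho,\delta)$.

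The minimax risk is monotone under inclusion, so it remains to show that it is invariant under translation by a known function. Take any estimator $\hat f$ for the class $f_0+\F(\rho/2,\delta)$, and build an estimator for $\F(\rho/2,\delta)$ as follows. Feed $\hat f$ the shifted data $(X_i,Y_i+f_0(X_i))$, then subtract $f_0$ from its output. The shifted data have exactly the law of observations from $f+f_0$, and the design and noise laws are unchanged, so the two risks coincide. Taking the infimum over estimators gives $\M_n(f_0+\F(\rho/2,\delta))=\M_n(\F(\rho/2,\delta))$, which proves (i).

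For (ii), I will reduce to the source-free analysis. Write $\hat\theta$ as the minimizer of a quadratic. Its first-order conditions give
\[
\hat\theta=(\hat\Sigma+\lambda S^2+\mu D^\ast GD)^{-1}\Big(\tfrac1n\textstyle\sum_i\phi(X_i)Y_i+\mu D^\ast b\Big),\qquad b=(\langle g,e_k\rangle)_k .
\]
Here $b$ is the vector of projections of $g$ onto the Fourier basis restricted to $\Omega$, so that $\theta^\ast D^\ast b$ corresponds to $\langle\D f_\theta,g\rangle_{L^2(\Omega)}$.

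I will follow the standard random-design ridge decomposition, presumably the one used for the $g=0$ upper bounds in \cref{thm:elliptic,thm:hyperbolic}, comparing $\hat f_n$ with a population oracle. The oracle is $\theta_\lambda=\arg\min\,\|f_\theta-f^\star\|^2_{L^2(\PP_X)}+\lambda\|S\theta\|^2+\mu\|\D f_\theta-g\|^2$. This splits the risk into a bias part and a variance part.

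\textbf{Variance.} The variance term involves only the noise and the matrix $\hat\Sigma+\lambda S^2+\mu D^\ast GD$, and $g$ does not enter it. It is therefore bounded by $C\sigma^2\mathcal N(\lambda,\mu)/n$ exactly as when $g=0$. Replacing $\hat\Sigma$ by $\Sigma$ costs a constant, using $\lambda\geq1/n$ and $s>d/2$ to control leverage.

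\textbf{Bias.} I will bound the oracle objective by evaluating it at a good comparator, namely $\theta_0=$ the Fourier coefficients of a Sobolev extension of $f^\star$ truncated at frequency $m$. Its penalty $\lambda\|S\theta_0\|^2$ is at most $C\lambda\rho^2$. Its residual satisfies $\|\D f_{\theta_0}-g\|\leq\delta+\|\D(f_{\theta_0}-f^\star)\|$. The truncation error is at most $Cm^{-(s-r)}\rho$ in $H^r$ and $Cm^{-s}\rho$ in $L^2$. The condition $m\geq(\mu/\lambda)^{1/(2s-2r)}$ makes $\mu m^{-2(s-r)}\rho^2\leq\lambda\rho^2$, and $m\geq\lambda^{-1/(2s)}$ makes $m^{-2s}\rho^2\leq\lambda\rho^2$. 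The objective at $\theta_0$ is then at most $C(\lambda\rho^2+\mu\delta^2)$. The empirical-versus-population bias fluctuation is controlled by the same objective, times constants.

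\textbf{Main obstacle.} The step I expect to be delicate is making sure the source term is handled in the empirical-process part of the bias. Because $g$ enters linearly and deterministically, it cancels once one writes the error $\hat\theta-\theta_\lambda$ in terms of $\hat\Sigma-\Sigma$ applied to the oracle residual. I would verify this cancellation explicitly.

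\textbf{Consequences.} The lower bounds then follow from (i) with $\rho/2$ in place of $\rho$. The upper bounds follow because the parameter choices in \cref{thm:elliptic,thm:hyperbolic} were obtained exactly by optimizing $\lambda\rho^2+\mu\delta^2+\sigma^2\mathcal N/n$.
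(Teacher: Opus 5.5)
Your part (i) is the paper's translation argument. The outer layer of your (ii) also matches \cref{lem:upper}: the same comparator (truncated Fourier coefficients of a periodic extension of $f^\star$), the same use of the two conditions on $m$, cancellation of $g$ when normal equations are subtracted, and a noise term that does not see $g$.

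The gap is the random-design step on which the whole bound rests. You assert that replacing $\hat\Sigma$ by $\Sigma$ ``costs a constant, using $\lambda\geq1/n$''. You also assert that the bias fluctuation is ``controlled by the same objective, times constants''. Your decomposition around the population oracle $\theta_\lambda$ gives no mechanism for either on the stated range. Write $B=\lambda S^2+\mu D^\ast GD$ and $r=f^\star-f_{\theta_\lambda}$. The fluctuation is $f_{\hat\theta_0-\theta_\lambda}$ with $\hat\theta_0-\theta_\lambda=(\hat\Sigma+B)^{-1}\big(\frac1n\sum_i\phi(X_i)r(X_i)-\EE[\phi(X)r(X)]\big)$. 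Controlling it needs $\|\Sigma^{1/2}(\hat\Sigma+B)^{-1}(\Sigma+B)^{1/2}\|_{\mathrm{op}}$ bounded, i.e.\ relative concentration of $\hat\Sigma$. Under (A1) the leverage is only bounded by $a_s/\lambda$, so matrix concentration needs $\lambda n\geq Ca_s\log(\cdot)$ with $C$ large. When $\lambda n$ is of order one, the bad event has non-negligible probability, and nothing on it is controlled at the scale $\lambda\rho^2$. Taking expectations directly fails too. The map $A\mapsto\mathrm{tr}(A^{-1}\Sigma)$ is convex, so Jensen gives $\EE\,\mathrm{tr}((\hat\Sigma+B)^{-1}\Sigma)\geq\mathrm{tr}((\Sigma+B)^{-1}\Sigma)$, the wrong direction. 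So your route yields (ii) at best for $\lambda\geq C\log n/n$, not on all of $[1/n,1]$. The sample-size conditions of \cref{thm:elliptic,thm:hyperbolic} do allow $\lambda=1/n$.

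The missing idea is the leave-one-out argument of \citet{mourtada2022an}, adapted to the penalty $B$ and a misspecified target in \cref{lem:oracle}. Add an independent input $X_{n+1}$, and set $A_+=\sum_{j\leq n+1}\phi_j\phi_j^\ast+nB$ and $h_i=\phi_i^\ast(A_+-\phi_i\phi_i^\ast)^{-1}\phi_i$. Then $h_i\leq a_s/(\lambda n)\leq a_s$, which is where $\lambda n\geq1$ enters, and it needs no smallness.

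\emph{Noise.} $\frac1n\mathrm{tr}((\hat\Sigma+B)^{-1}\Sigma)$ is the conditional mean of $h_{n+1}$. Exchangeability and the Sherman--Morrison identity $h_i=(1+h_i)\phi_i^\ast A_+^{-1}\phi_i$ bound its expectation by $\frac{1+a_s/(\lambda n)}{n+1}\EE\,\mathrm{tr}(A_+^{-1}(A_+-nB))$. This is concave in $A_+$, so Jensen now goes the right way and produces $\mathcal N(\lambda,\mu)$.

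\emph{Noiseless fit.} Compare with the $(n+1)$-point fit $\theta_+$ instead of $\theta_\lambda$. Sherman--Morrison gives $y_i-\phi_i^\ast\theta_{-i}=(1+h_i)(y_i-\phi_i^\ast\theta_+)$. Exchangeability turns the risk into an average of these leave-one-out residuals. The $(n+1)$-point objective at $\theta_+$ is at most its value at your comparator, whose expectation is the population objective. The source term cancels in exactly this step, as you anticipated; it is the easy part, not the obstacle.
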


\subsection{The effective dimension of the physics-informed kernel}
\label{sec:count}
The upper bounds follow from an oracle inequality for~\eqref{eq:estimator} and from the following bounds.

\begin{theorem}[Effective dimension]
\label{thm:count}
Assume (A1), and let $\D$ be linear with constant real coefficients.
For $\lambda\in(0,1]$, let $\ell_{r,d}(\lambda)=\log(2+1/\lambda)$ if $r=d=2$ and $\ell_{r,d}(\lambda)=1$ otherwise.
There is $C>0$, depending only on $d,s,L,\kappa,\Omega$ and $\D$, and independent of $\lambda\in(0,1]$, $\mu\geq0$ and $m$, such that:
\begin{itemize}
\item[(i)] if $s>d/2$, then $\mathcal N(\lambda,0)\leq C\lambda^{-d/(2s)}$;
\item[(ii)] if $\D$ is hyperbolic of order $r\in\{1,2\}$, $s>\max(r,d-r)$ and $\mu\in(0,1]$, then $\mathcal N(\lambda,\mu)\leq C\big(\lambda^{-(d-1)/(2s)}\log(2+1/\lambda) +\lambda^{-(d-r)/(2s)}\mu^{-1/2}\ell_{r,d}(\lambda)\big)$;
\item[(iii)] if $\D$ is elliptic of order $r$, $d<2r$, $s\geq r$ and $\mu>0$, then $\mathcal N(\lambda,\mu)\leq C\big(\lambda^{-(d-1)/(2s)}+\mu^{-d/(2r)}\big)$.
\end{itemize}
\end{theorem}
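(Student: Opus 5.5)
The plan is to reduce $\mathcal N(\lambda,\mu)$ to a lattice-point count in Fourier space. The first step is to compare the population covariance with the Lebesgue Gram matrix. By (A1), $\Sigma\preceq\kappa G$. Since $\Omega\subseteq[-L,L]^d$ sits inside the torus $[-2L,2L]^d$, we also have $G\preceq(4L)^dI$.

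The operator monotonicity of $t\mapsto t/(t+c)$ only applies cleanly for commuting matrices, so we do not use it directly. Instead, write
\[
\mathcal N(\lambda,\mu)=\mathrm{tr}\bigl(\Sigma^{1/2}(\Sigma+\lambda S^2+\mu D^\ast GD)^{-1}\Sigma^{1/2}\bigr).
\]
Because $\Sigma\preceq\kappa G$, this is at most a constant times $\mathrm{tr}\bigl((\kappa G+\lambda S^2+\mu D^\ast GD)^{-1}\kappa G\bigr)$, since $A\mapsto\mathrm{tr}((A+B)^{-1}A)$ is monotone in $A\succeq0$ for fixed $B\succ0$. This reduces the problem to the Lebesgue measure on $\Omega$. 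In function language, we must bound the effective dimension of the quadratic form $\|f\|_{L^2(\Omega)}^2$ relative to $\lambda\|f\|_{H^s_{per}}^2+\mu\|\D f\|_{L^2(\Omega)}^2$ over trigonometric polynomials of degree $m$. Equivalently, by the min-max principle, we count the eigenvalues of $\|f\|^2_{L^2(\Omega)}$ relative to this energy that exceed $t$, and integrate: $\mathcal N\lesssim\sum_j \min(1,\nu_j)$.

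For (i), drop $\mu$ and bound $G\preceq(4L)^dI$. Then $\mathcal N\le\sum_k (4L)^d/((4L)^d+\lambda(1+\|k\|^{2s}))\lesssim\lambda^{-d/(2s)}$ whenever $2s>d$. This case is routine.

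For (ii) and (iii), the difficulty is that $\mu\|\D f\|^2_{L^2(\Omega)}$ penalizes only inside $\Omega$, so $D^\ast GD$ is not diagonal. The plan is to build, via the Courant–Fischer principle, an explicit subspace of codimension $N$ on which the Rayleigh quotient $\|f\|^2_{L^2(\Omega)}/(\lambda\|f\|^2_{H^s}+\mu\|\D f\|^2_{L^2(\Omega)})$ is at most $t$. This gives $\nu_{N+1}\le t$, and we then sum over dyadic $t$.

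In the elliptic case, interior elliptic regularity gives, on an interior subdomain $\Omega_\eta$ at distance $\eta$ from $\partial\Omega$,
\[
\|f\|_{H^r(\Omega_\eta)}\lesssim\|\D f\|_{L^2(\Omega)}+\eta^{-r}\|f\|_{L^2(\Omega)}.
\]
We split $\Omega$ into a boundary layer of width $\eta$ and its interior.
\begin{itemize}
\item \emph{Interior.} Local Weyl counting for the $H^r$ seminorm gives $\approx \eta^{0}\mu^{-d/(2r)}$ eigenvalues above $t$.
\item \emph{Boundary layer.} Covering by $\eta^{-(d-1)}$ cubes of side $\eta$, each with $\lambda H^s$-counting $\approx \eta^d\lambda^{-d/(2s)}$, gives roughly $\eta\lambda^{-d/(2s)}$ modes. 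Choosing $\eta\approx\lambda^{1/(2s)}$ yields $\lambda^{-(d-1)/(2s)}$.
\end{itemize}
The condition $d<2r$ ensures $H^r\hookrightarrow C^0$, so that pointwise localization errors are controlled.

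In the hyperbolic case, the symbol $P$ vanishes on a cone (or a hyperplane when $r=1$) of dimension $d-1$. Frequencies near the characteristic set are penalized only by $\lambda$, while elsewhere $|P(\xi)|\gtrsim|\xi|^{r-1}\mathrm{dist}(\xi,\{P=0\})$. The count is then the number of lattice points $k$ with $\lambda|k|^{2s}+\mu|P(k)|^2\lesssim1$. Near the cone $\{|k|\le R\}$, with $R=\lambda^{-1/(2s)}$, the width of the admissible shell is about $\mu^{-1/2}|k|^{1-r}$. Integrating over the $(d-1)$-dimensional cone gives $\int^R \rho^{d-2}\rho^{1-r}\mu^{-1/2}\,d\rho\approx R^{d-r}\mu^{-1/2}$, with a logarithm when $d=r=2$. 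Lattice points lying exactly on the cone contribute $R^{d-1}\log$.

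The main obstacle is transferring this Fourier-side count on the torus to $L^2(\Omega)$ with the penalty restricted to $\Omega$, since multiplication by $\mathbf 1_\Omega$ mixes frequencies. The first thing to try is to multiply by a smooth cutoff $\chi$ equal to $1$ on $\Omega$ and supported in $[-2L,2L]^d$, and to control the commutator $[\D,\chi]$, which is of order $r-1$, by the $\lambda H^s$ term through interpolation. This is where $s>\max(r,d-r)$ should enter. If that fails, the fallback is to use the boundary-layer decomposition above with wave-packet (Gabor) localization at scale $\eta$.
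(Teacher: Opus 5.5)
Your reduction ($\Sigma\preceq\kappa G$, monotonicity of $A\mapsto\mathrm{tr}((A+B)^{-1}A)$, eigenvalue counting by Courant--Fischer) and your proof of (i) are those of the paper. Parts (ii) and (iii), however, do not follow from what you describe.

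\textbf{Elliptic case.} Your interior step drops the lower-order term of your own interior estimate. On the span of eigenvectors with $\nu_j>\tau$ you only know $\mu\|\D f\|^2_{L^2(\Omega)}\le C\|f\|^2_{L^2(\Omega)}/\tau$. The estimate therefore gives $\|f\|^2_{H^r(\Omega_\eta)}\le C((\mu\tau)^{-1}+\eta^{-2r})\|f\|^2_{L^2(\Omega)}$, and Weyl counting yields $(\mu\tau)^{-d/(2r)}+\eta^{-d}$ modes on $\Omega_\eta$, not $\mu^{-d/(2r)}$. With $\eta=\lambda^{1/(2s)}$ the second term is the full Sobolev count $\lambda^{-d/(2s)}$. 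No single width helps: balancing $\eta\lambda^{-d/(2s)}$ against $\eta^{-d}$ gives $\lambda^{-d^2/(2s(d+1))}$, and $d^2/(d+1)>d-1$. Exact solutions such as harmonic functions are invisible to $\mu$, so their count must come from $\lambda$, and one interior estimate at a single scale over-counts them.

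The missing idea is multiscale localization. The paper uses a Whitney decomposition (\cref{lem:boundary}) and rescales each cube of side $h\ge\bar c\lambda^{1/(2s)}$ to unit size, with parameters $(h/2)^{-2s}\lambda$ and $(h/2)^{-2r}\mu$. On each cube (\cref{lem:localell}) it subtracts a polynomial and a periodic particular solution $Z$ with $\|Z\|_{H^r}\le C\mu^{-1/2}$. The remainder then has a smooth image under $\D$, so \cref{lem:interior} applies at an arbitrary order $j_0$. The per-cube dimension $C(1+(h\lambda^{-1/(2s)})^{sd/(j_0+r)}+h^d\mu^{-d/(2r)})$, summed against the $O(h^{1-d})$ cubes of each dyadic size, gives $\lambda^{-(d-1)/(2s)}+\mu^{-d/(2r)}$. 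Your boundary-layer count is only used below $h=\bar c\lambda^{1/(2s)}$. Also, $d<2r$ serves to make $\tau^{-d/(2r)}$ integrable in \cref{lem:counting}(ii), not to control pointwise localization.

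\textbf{Hyperbolic case.} Your lattice count is the effective dimension for the torus penalty $\mu\|\D f\|^2_{L^2(\T)}$. Replacing $D^\ast GD$ by $(4L)^dD^\ast D$ enlarges the penalty and can only decrease the trace. So this count has the right shape but does not bound $\mathcal N(\lambda,\mu)$ from above.

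The cutoff repair fails for a reason beyond the commutator. If $\chi=1$ on $\Omega$, then $\|\D(\chi f)\|_{L^2(\T)}$ involves $\D f$, and $f$ itself through $[\D,\chi]$, on $\mathrm{supp}\,\chi\setminus\Omega$. There the penalty and the normalization $\kappa\|f\|^2_{L^2(\Omega)}\le1$ give no information, only $\|f\|_{H^s(\T)}\le C(\tau\lambda)^{-1/2}$ is available, and interpolation cannot improve this. If instead $\mathrm{supp}\,\chi\subset\Omega$, you are back to a boundary layer, which must be multiscale as above.

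The paper makes your shell heuristic rigorous locally. After reduction to $\Dc=\partial_1$ or $\partial_1^2-\Delta'$ (\cref{lem:normal}) and the same Whitney assembly, \cref{lem:local} cuts off only in the space variables $y'$, which commutes with $\partial_1$ when $r=1$. For $r=2$ it first subtracts the free wave $h$ carrying the Cauchy data of the extension on a good time slice. The energy inequality on cones then bounds $w=U-h$, and hence the commutator terms $2\nabla'\chi\cdot\nabla'w+w\Delta'\chi$, by $C\mu^{-1/2}$. The bases $e^{\pm i\omega_kt}b^{I_j}_p(t)\psi_k(y')$ with $p<J_k\approx\mu^{-1/2}/\omega_k$ are the rigorous version of your shell of width $\mu^{-1/2}|k|^{1-r}$. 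For $r=2$, the sum $\sum_{0<\omega_k\le K}\omega_k^{-1}$ produces the factor $\lambda^{-(d-r)/(2s)}\ell_{r,d}(\lambda)$. Finally, $s>\max(r,d-r)$ enters only through the exponents $(d-1)/(2s)$ and $1/2+(d-r)/(2s)$ of $\tau$ being less than one in \cref{lem:counting}(ii).
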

Since $\mathcal N(\lambda,\mu)\leq\mathcal N(\lambda,0)$, bound (i) can be used whenever it is smaller.

\section{Implementation}
\label{sec:implementation}

\subsection{Hyperparameter tuning with an unknown modeling error}
\label{sec:adaptation}
The residual bound $\delta$ can be removed from the tuning by validation, under a stronger noise assumption; the construction still uses $s$ and a known radius $\rho$.
Split the $n\geq4$ observations into a training sample of size $n_1=\lfloor n/2\rfloor$ and a validation sample of size $n_2=n-n_1$, and let $\mathcal G=\{2^j/n_1:j=0,\ldots,\lfloor\log_2n_1\rfloor\}\cup\{1\}$.
For $\lambda\in\mathcal G$ and $\mu\in\mathcal G\cup\{0\}$, fit~\eqref{eq:estimator} on the training sample with $m_{\lambda,\mu}=\lceil\max\{\lambda^{-1/(2s)},(\mu/\lambda)^{1/(2s-2r)}\}\rceil$, and clip it to $[-C_{\mathrm{emb}}\rho,C_{\mathrm{emb}}\rho]$, where $\|f\|_{L^\infty(\Omega)}\leq C_{\mathrm{emb}}\|f\|_{H^s(\Omega)}$; call the result $\check f_{\lambda,\mu}$.
Let $\tilde f_n=\check f_{\hat\lambda,\hat\mu}$, where $(\hat\lambda,\hat\mu)$ minimizes the validation error $n_2^{-1}\sum_{i>n_1}(Y_i-\check f_{\lambda,\mu}(X_i))^2$, with ties broken by a fixed order.

\begin{theorem}[Adaptation to the residual bound]
\label{thm:adaptation}
Assume (A1) and, for some $\sigma>0$, $\EE[\exp(t\varepsilon)\mid X]\leq\exp(\sigma^2t^2/2)$ for every $t\in\R$.
Assume the operator and smoothness conditions of \cref{thm:elliptic} or of the upper bound of \cref{thm:hyperbolic}.
There are $C,C_0>0$, depending only on the quantities specified in those theorems, such that, whenever $n_1\geq C_0\sigma^2/\rho^2$ and $(\sigma^2/(n_1\rho^2))^{\gamma(s,d-1)}\geq1/n_1$, for every $\delta\geq0$, the estimator $\tilde f_n$, whose construction does not use $\delta$, has a worst-case risk over $\F(\rho,\delta)$ bounded by the upper bounds of \cref{thm:elliptic} or \cref{thm:hyperbolic}, respectively, with a larger constant $C$ and the additional term $C(\rho^2+\sigma^2)\log\log(3+n)/n$.
\end{theorem}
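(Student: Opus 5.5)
The proof is a standard hold-out selection argument on the finite grid $\mathcal G\times(\mathcal G\cup\{0\})$, combined with the fixed-parameter upper bounds. The grid has $K\lesssim(\log_2 n)^2$ elements, so $\log K\lesssim\log\log(3+n)$.

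\textbf{Step 1: an oracle parameter pair lies in the grid.} Fix $\delta\ge0$ and $f^\star\in\F(\rho,\delta)$. Let $(\lambda^\star,\mu^\star)$ be the parameters prescribed by \cref{thm:elliptic} or \cref{thm:hyperbolic} with $n_1$ in place of $n$, in case (a) or (b) according to $\delta$. The condition $(\sigma^2/(n_1\rho^2))^{\gamma(s,d-1)}\geq1/n_1$ gives $\lambda^\star\ge1/n_1$, and $\lambda^\star\le1$ for large $n_1$.
\begin{itemize}
\item Round $\lambda^\star$ up to the nearest grid point $\bar\lambda\in[\lambda^\star,2\lambda^\star]$.
\item Round $\mu^\star$ to a grid point $\bar\mu\in[\mu^\star/2,\mu^\star]$. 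If $\mu^\star<1/n_1$, take $\bar\mu=0$ or $1/n_1$, and check that the effective dimension term changes by at most a constant. Since $\mu^\star\le\lambda^{\star\,\text{power}}\le1$, the value $\mu^\star$ stays within the grid range.
\end{itemize}
By \cref{prop:source}(ii) with $g=0$, the risk is controlled by $\lambda\rho^2+\mu\delta^2+\sigma^2\mathcal N(\lambda,\mu)/n_1$. This quantity is monotone in each parameter up to constant factors: $\mathcal N$ decreases in $\lambda$ and $\mu$, and the bounds of \cref{thm:count} scale polynomially. Hence $\check f_{\bar\lambda,\bar\mu}$ attains the risk bound of the relevant theorem up to a constant. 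The choice $m_{\lambda,\mu}$ satisfies the requirement of \cref{prop:source}(ii). Clipping to $[-C_{\mathrm{emb}}\rho,C_{\mathrm{emb}}\rho]$ cannot increase the pointwise error, because $|f^\star|\le C_{\mathrm{emb}}\rho$. One caveat: the theorems' risk bounds are stated for the noise condition (A2), which the sub-Gaussian condition implies, so they apply conditionally on nothing extra.

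\textbf{Step 2: validation inequality.} Condition on the training sample. The candidates $\check f_j$ are then fixed functions bounded by $B=C_{\mathrm{emb}}\rho$, and $f^\star$ is also bounded by $B$. I would use the standard model-selection bound for least squares over a finite class of bounded functions with sub-Gaussian noise. Write
\[
n_2^{-1}\sum_{i>n_1}(Y_i-\check f_j(X_i))^2-(Y_i-f^\star(X_i))^2=\|\check f_j-f^\star\|_{n_2}^2-2n_2^{-1}\textstyle\sum\varepsilon_i(\check f_j-f^\star)(X_i).
\]
Three ingredients control this difference:
\begin{itemize}
\item Bernstein's inequality, to compare empirical and population norms of the bounded functions $(\check f_j-f^\star)^2$. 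The variance is at most $4B^2\|\check f_j-f^\star\|^2$.
\item Sub-Gaussian concentration of the noise cross term, conditionally on the $X_i$. Its variance proxy is $\sigma^2\|\check f_j-f^\star\|^2_{n_2}/n_2$.
\item A union bound over the $K$ candidates, followed by integration of the tail.
\end{itemize}
Together these give
\[
\EE\|\tilde f_n-f^\star\|^2_{L^2(\PP_X)}\le 2\min_j\EE\|\check f_j-f^\star\|^2+C(B^2+\sigma^2)\frac{\log K}{n_2}.
\]
This is where I expect the main technical care to lie. The cross term must be absorbed into half of $\|\check f_j-f^\star\|^2$ via $2ab\le a^2/4+4b^2$, and the Bernstein step needs a fixed point rather than an expectation bound, so that no $\sqrt{\log K/n}$ term appears. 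This is the usual argument in \citet{gyorfi2002distribution}, Theorem 7.1, adapted from bounded to sub-Gaussian $Y$. That adaptation works because the noise enters only through the linear cross term, so truncating $Y$ is unnecessary.

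\textbf{Step 3: conclude.} Since $n_1\asymp n_2\asymp n$, the rates in $n_1$ are within constant factors of those in $n$. Combining Steps 1 and 2 gives the bounds of \cref{thm:elliptic} or \cref{thm:hyperbolic}, with a larger constant, plus $C(\rho^2+\sigma^2)\log\log(3+n)/n$. This holds uniformly over $f^\star\in\F(\rho,\delta)$ and every $\delta\ge0$, and the construction of $\tilde f_n$ never used $\delta$.
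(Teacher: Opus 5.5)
Your overall route is the paper's: a hold-out oracle inequality over the $O((\log n)^2)$ clipped candidates, a grid point near the parameters of \cref{thm:elliptic} or \cref{thm:hyperbolic} at sample size $n_1$, and the conversion $n_1\asymp n$. Your validation argument is a valid alternative to the paper's \cref{lem:validation}. You center at $f^\star$, use Bernstein for the quadratic part and conditional sub-Gaussian bounds for the cross term. The paper instead compares each candidate with the best one and applies a single moment-form Bernstein inequality to $(Y-u_j(X))^2-(Y-u_{j_0}(X))^2$. Both give the needed $\log J/n_2$ remainder, with leading constants $2$ and $3$ respectively.

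\textbf{The unresolved case.} You leave $0<\mu^\star<1/n_1$ open, and your fallback $\bar\mu=0$ would fail there. With $\mu=0$ only \cref{thm:count}(i) is available, so the effective-dimension term jumps to Sobolev order rather than changing by a constant. The case is in fact empty, and showing this is what makes your downward rounding of $\mu$ legitimate. In regime (a), $\delta\le\delta_{n_1}\le\rho$, because $\sigma^2/(n_1\rho^2)\le1$, $s>r$ and $\ell\geq1$. Hence $\lambda^\star\rho^2/\delta^2\ge\lambda^\star$. The caps $(\lambda^\star)^{r(d-1)/(sd)}$ and $(\lambda^\star)^{(r-1)/s}$ have exponents in $[0,1)$ and $\lambda^\star\le1$, so they also lie in $[\lambda^\star,1]$. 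Therefore $\mu^\star\in[\lambda^\star,1]\subseteq[1/n_1,1]$ whenever $\mu^\star>0$. Rounding up to $1/n_1$ would also have been safe. But the quantity to check then is the bias $\bar\mu\delta^2\le\delta^2/n_1\le\lambda^\star\rho^2$, not the effective dimension, which can only decrease.

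\textbf{Choice of regime.} Your phrase ``according to $\delta$'' should specify the sample size at which the regime is chosen. Fix it at sample size $n$. Regime (a) at $n$ implies regime (a) at $n_1$, because $\delta_n\le\delta_{n_1}$ and $\ell$ is nondecreasing in the sample size. In regime (b), the Sobolev parameters at $n_1$ give the Sobolev rate for every $\delta$, since case (b) of the upper-bound proof never uses $\delta>\delta_n$. With these two points, your Step 3 conversion from $n_1$ to $n$ goes through.
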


The sub-Gaussian bound implies (A2); the construction does not use $\sigma$, and for fixed $\rho,\sigma$ its validation term is of smaller order than the exact-solution rate.

\subsection{A fast algorithm attaining the optimal rate}
\label{sec:algo}
For a constant-coefficient operator, the minimizer of~\eqref{eq:estimator} solves $A_n\theta=v_n$, where
\[
A_n=\hat\Sigma+\lambda S^2+\mu D^\ast GD,\qquad
\hat\Sigma=\frac1n\sum_{i=1}^n\phi(X_i)\phi(X_i)^\ast,
\qquad v_n=\frac1n\sum_{i=1}^nY_i\phi(X_i).
\]
The matrices $\hat\Sigma$ and $G$ are $d$-level Toeplitz: their entries depend only on $l-k$, with $\|l-k\|_\infty\leq2m$.

\begin{algorithm}[t]
\caption{Fast physics-informed kernel regression}
\label{alg:fast}
\begin{algorithmic}
\STATE \textbf{Input:} data $(X_i,Y_i)_{i\leq n}$, $m,\lambda,\mu$, number of iterations $K$.
\STATE Compute the entries $n^{-1}\sum_ie^{i\pi\langle j,X_i\rangle/(2L)}$ ($\|j\|_\infty\leq2m$) of $\hat\Sigma$, and $v_n$, by type-1 non-uniform FFT (NUFFT).
\STATE Compute $G_{0,j}$ ($\|j\|_\infty\leq2m$): closed form for boxes and polytopes, or FFT of $\mathbf 1_\Omega$ on a grid.
\STATE Starting from $\theta^{(0)}=0$, run at most $K$ conjugate-gradient iterations on $A_n\theta=v_n$, preconditioned by $\lambda S^2$, with FFT-based Toeplitz products; terminate if the residual is zero.
\STATE \textbf{Output:} $f_\theta$, evaluated at new points by type-2 NUFFT.
\end{algorithmic}
\end{algorithm}

\begin{theorem}[Complexity]
\label{thm:algo}
Let $\D$ be linear of order $r\leq s$ with constant real coefficients, and assume that the Toeplitz generating sequences $\hat\Sigma_{0,j}$ and $G_{0,j}$, $\|j\|_\infty\leq2m$, and $v_n$ are supplied exactly.
Each iteration of \cref{alg:fast} costs $O(m^d\log(2+m))$ arithmetic operations.
Under the conditions of \cref{thm:elliptic} or of the upper bound of \cref{thm:hyperbolic}, with the corresponding $(\lambda,\mu,m)$ and fixed $\rho,\sigma$, there is $C_K>0$, depending only on $d$, $s$, $L$, $\kappa$, $\Omega$, $\D$, $\rho$ and $\sigma$, such that $K\geq C_K\lambda^{-1/2}\log(2+n)$ iterations give the upper bounds of those theorems with $C$ replaced by $2C+2$.
If $K=\lceil C_K\lambda^{-1/2}\log(2+n)\rceil$, $m$ is the smallest allowed truncation, and $(s-1)(s-r)\geq d$ for hyperbolic operators or $s\geq r+1$ for elliptic operators, the iterations cost $O(n\log^2(2+n))$ operations in total.
\end{theorem}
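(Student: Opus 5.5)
The statement has two parts: a per-iteration cost and an iteration count that preserves the rates.

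\emph{Per-iteration cost.} Each conjugate-gradient iteration needs one product by $A_n$ and one application of the preconditioner $(\lambda S^2)^{-1}$. The preconditioner and the diagonal products by $D$, $D^\ast$ and $S^2$ cost $O(m^d)$. The matrices $\hat\Sigma$ and $G$ are $d$-level Toeplitz with generating sequences indexed by $\|j\|_\infty\leq2m$. We embed each in a $d$-level circulant of size $(4m+1)^d$, so each product is a zero-padded multidimensional FFT convolution costing $O(m^d\log(2+m))$. The remaining vector operations of conjugate gradient are $O(m^d)$.

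\emph{Iteration count.} We would use the standard preconditioned conjugate-gradient bound
\[
\|\theta^{(K)}-\hat\theta\|_{A_n}\leq2\Big(\frac{\sqrt\kappa-1}{\sqrt\kappa+1}\Big)^K\|\hat\theta\|_{A_n},
\]
where $\kappa$ is the condition number of $(\lambda S^2)^{-1/2}A_n(\lambda S^2)^{-1/2}$. Its smallest eigenvalue is at least $1$, because $\hat\Sigma$ and $D^\ast GD$ are positive semidefinite. For the largest eigenvalue we bound each term separately.
\begin{itemize}
\item $\hat\Sigma\preceq\|\phi\|_\infty^2 I=(2m+1)^dI$. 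Since $S^2\succeq I$, the $\hat\Sigma$ contribution is at most $(2m+1)^d/\lambda$.
\item $G\preceq(4L)^dI$ because $G$ is a Gram matrix on the torus, and $|P(i\pi k/(2L))|^2\lesssim1+\|k\|^{2r}\leq S^2_{kk}$. Hence the $\mu D^\ast GD$ contribution is at most $C\mu/\lambda$.
\end{itemize}
This naive bound gives $\kappa\lesssim m^d/\lambda$, which is too large. The main obstacle is to obtain $\kappa\lesssim1/\lambda$.

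To do so, we avoid the crude bound on the $\hat\Sigma$ term. Under $s>d/2$ we have
\[
\theta^\ast\hat\Sigma\theta\leq\|f_\theta\|_\infty^2\leq C\|S\theta\|^2,
\]
by Cauchy--Schwarz with $\sum_k(1+\|k\|^{2s})^{-1}<\infty$. This gives $\hat\Sigma\preceq CS^2$, so this term contributes at most $C/\lambda$. For the physics term, $\mu/\lambda$ is bounded by a power of $1/\lambda$ for the chosen parameters. We expect to refine it to $\mu D^\ast GD\preceq C\mu\,S^{2r/s}$, using $\|D\theta\|^2\leq\|S^{r/s}\theta\|^2$. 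Interpolating on the diagonal, $\mu\|k\|^{2r}\leq\lambda\|k\|^{2s}+\mu(\mu/\lambda)^{r/(s-r)}$. In the parameter regimes of \cref{thm:elliptic,thm:hyperbolic}, $\mu\leq1$ gives $\mu\|k\|^{2r}\lesssim\lambda S^2_{kk}+1$. This yields $\kappa\leq C/\lambda$.

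\emph{Stopping criterion.} It then suffices to take $K\gtrsim\sqrt\kappa\log(1/\eta)$, with $\eta$ making the optimization error negligible. We need $\|f_{\theta^{(K)}}-f_{\hat\theta}\|_{L^2(\PP_X)}^2$ to be at most the statistical bound. We control this by $\kappa\|\phi\|$-type norms through $\Sigma\preceq CS^2$, together with $\EE\|\hat\theta\|_{A_n}^2\lesssim\rho^2+\sigma^2$ for fixed $\rho,\sigma$. Choosing $\eta$ polynomial in $n$ gives $K=C_K\lambda^{-1/2}\log(2+n)$. Finally, $(a+b)^2\leq2a^2+2b^2$ turns $C$ into $2C+2$: the $+2$ absorbs the optimization error, which is at most the rate because $\lambda\geq1/n$.

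\emph{Total cost.} The total cost is $O(K m^d\log m)=O(\lambda^{-1/2}m^d\log^2 n)$. With the minimal $m=\max\{\lambda^{-1/(2s)},(\mu/\lambda)^{1/(2s-2r)}\}$ we need $\lambda^{-1/2}m^d\lesssim n$. In the dominant regime $\lambda\approx n^{-\gamma(s,d-1)}$, the first choice gives $\lambda^{-(s+d)/(2s)}$, and the exponent condition reduces to $(s+d)/(2s+d-1)\leq1$, which holds. For the second choice we substitute $\mu=\lambda^{(r-1)/s}$ (hyperbolic) or $\lambda^{r(d-1)/(sd)}$ (elliptic). The inequality $\lambda^{-1/2}(\mu/\lambda)^{d/(2s-2r)}\leq n$ then becomes a polynomial condition in $s,r,d$. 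We expect this to reduce exactly to $(s-1)(s-r)\geq d$, respectively $s\geq r+1$, after checking the extreme values of $\lambda$ in $[n^{-\gamma(s,d-1)},n^{-\gamma(s,d)}]$ and using $\lambda\geq1/n$.
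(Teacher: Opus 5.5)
Your proposal follows the paper's proof. The per-iteration cost comes from circulant embedding of the $d$-level Toeplitz matrices. Conjugate gradients are preconditioned by $\lambda S^2$, with $I\preceq(\lambda S^2)^{-1/2}A_n(\lambda S^2)^{-1/2}\preceq(C/\lambda)I$ obtained from $\hat\Sigma\preceq a_sS^2$ and $D^\ast GD\preceq CS^2$. The $A_n$-norm error is transferred to $L^2(\PP_X)$ at the price of a factor $\lambda^{-1}\leq n$, and the total cost comes from an exponent count for $\lambda^{-1/2}m^d$.

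One step should be deleted rather than refined. The claim that $\mu\leq1$ gives $\mu\|k\|^{2r}\leq C(\lambda S^2_{kk}+1)$ is false. The additive term $\mu(\mu/\lambda)^{r/(s-r)}$ in your own interpolation is unbounded for the chosen parameters. For transport with $\delta=0$, one has $\mu=1$ and this term equals $\lambda^{-1/(s-1)}$. It is attained at frequencies $\|k\|_2\approx\lambda^{-1/(2s-2)}$, which lie inside the truncation $m\geq(\mu/\lambda)^{1/(2s-2)}$. Even if the claim held, it would give nothing better than a condition number of order $1/\lambda$. Your second bullet already gives the needed bound: the physics part of the preconditioned matrix has norm at most $C\mu/\lambda\leq C/\lambda$. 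This holds because, in every regime, $\mu$ is $0$ or at most a nonnegative power of $\lambda$, and it is exactly how the paper concludes.

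The steps you left as expectations do close.
\begin{itemize}
\item The bound $\EE\|\hat\theta\|^2_{A_n}\leq\kappa\rho^2+\sigma^2$ follows from $A_n\succeq\hat\Sigma$, which gives $v_n^\ast A_n^{-1}v_n\leq n^{-1}\sum_iY_i^2$.
\item For the total cost, $\mu\leq\lambda^a$ with $a=(r-1)/s$, resp.\ $r(d-1)/(sd)$, gives $m\leq2\lambda^{-b/(2s)}$. Here $b=1+1/(s-r)$, resp.\ $1+r/(d(s-r))$, and $b=1$ when $\mu=0$.
\item Since $\lambda^{-(s+bd)/(2s)}$ decreases in $\lambda$, only the lower bound $\lambda\geq(\sigma^2/(n\rho^2))^{\gamma(s,d-1)}$ matters.
\item The exponent satisfies $(s+bd)/(2s+d-1)\leq1$ if and only if $(b-1)d\leq s-1$. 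This is $(s-1)(s-r)\geq d$, resp.\ $s(s-r-1)\geq0$.
\item The bound $\lambda\geq1/n$ is needed only for $\log(2+m)\leq C\log(2+n)$.
\end{itemize}
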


At fixed relative accuracy, NUFFT \citep{barnett2019parallel} computes $\hat\Sigma$ and $v_n$ in $O(n+m^d\log(2+m))$ operations, and $G$ has a closed form for boxes and polytopes; numerical errors and the computation of $G$ on a general domain are outside the theorem.

\subsection{Numerical illustration}
We take the wave operator $\partial_t^2-\partial_x^2$ on $\Omega=(-1/2,1/2)^2$, uniform design, Gaussian noise with $\sigma=0.5$, $s=3$, and exact solutions given by finite Fourier sums along the two characteristics; for $d=2$ and $s=3$, the smoothness conditions of \cref{thm:algo} hold for the wave, transport and Laplace operators.
The parameters follow the powers of $n$ in \cref{thm:hyperbolic,thm:elliptic}, with constants set to one (details in \cref{app:exp}).
With exact physics and ten resamples for $n\in[10^2,10^7]$, the least-squares slopes of the mean test error are $0.77$ for the Sobolev estimator and $0.89$ for the physics-informed one, and their error ratio decreases to $0.19$ at $n=10^7$ (\cref{fig:exp}, left); for fixed smooth targets, slopes may exceed the minimax exponents $0.75$ and $6/7$.
Transport, Laplace and three-dimensional wave experiments behave similarly (\cref{app:exp}).
For imperfect physics, we add to this target a rough random perturbation whose residual gives $\delta/\delta_n\in\{0.1,1\}$ at $n=10^4$, and select each estimator's parameters by the smallest test error over a grid, an oracle choice (\cref{fig:exp}, middle).
The error ratio decreases from $0.83$ to below $0.5$ with exact physics, stays around $0.8$ for $\delta/\delta_n=0.1$, and is close to one for $\delta/\delta_n=1$, as \cref{thm:universal} suggests, although single targets do not determine the worst-case threshold; for $n\geq3\cdot10^5$, the grid limits the oracle (\cref{app:exp}).
Starting from $\theta^{(0)}=0$, a fit with $n=10^7$ points takes $5.6$ seconds on one core of an Apple M2 Pro, and the number of iterations increases from $15$ to $431$ over $n\in[10^2,10^7]$ (fitted exponent $0.29$, below the order $\lambda^{-1/2}=n^{3/7}$ of \cref{thm:algo}).

\begin{figure}[t]
\centering
\includegraphics[width=0.32\textwidth]{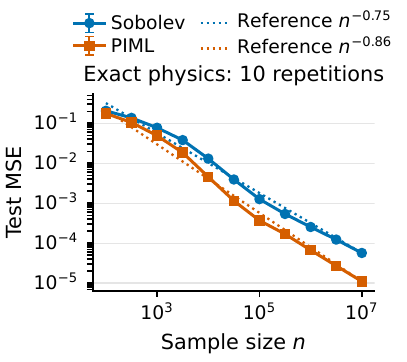}
\includegraphics[width=0.32\textwidth]{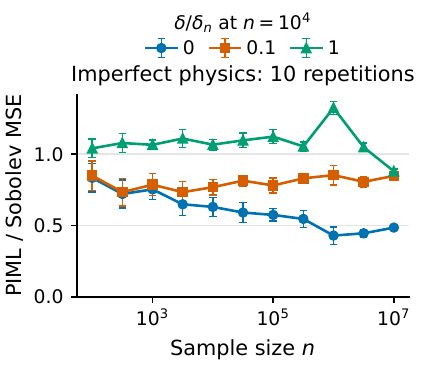}
\includegraphics[width=0.32\textwidth]{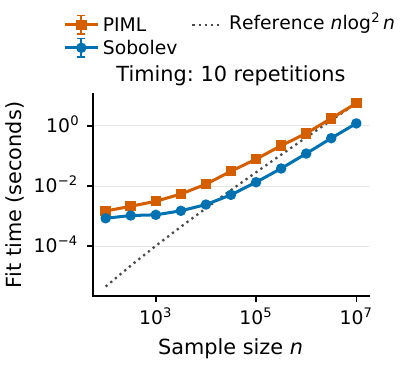}
\caption{Wave operator, $d=2$, $s=3$; means over ten resamples with $\pm2$ estimated standard errors. Left: test errors, exact physics. Middle: ratio of mean errors (physics-informed over Sobolev) after oracle selection, labeled by $\delta/\delta_n$ at $n=10^4$. Right: fit time of \cref{alg:fast} on one core.}
\label{fig:exp}
\end{figure}

\section{Conclusion}
\label{sec:discussion}
The relative residual $\delta/\delta_n$ controls the worst-case benefit of an approximate PDE prior: above this scale the Sobolev rate remains necessary, and below it elliptic and hyperbolic equations have different residual-dependent rates.
The conditions $d<2r$ and $s>\max(r,d-r)$ are sufficient for our analysis; their necessity is not established, and exact-solution lower bounds for elliptic operators on domains without a flat face are not treated here.
All lower bounds extend, with changed constants, to design densities bounded above and below on $\Omega$, and to noise independent of $X$ from location families $Q_u$ with $\mathrm{KL}(Q_u,Q_v)\leq C(u-v)^2/\sigma^2$, since only the constants $b$, $B_0$ and $a^2$ of \cref{lem:testing} change.
Parabolic, variable-coefficient and nonlinear equations are covered only by the universal lower bound; other residual norms and adaptation to smoothness are left for future work.
\label{endmain}

\appendix
\label{startapp}
\startcontents[appendix]
\printcontents[appendix]{}{1}[2]{\section*{Contents of the appendix}}

\section{Lower bounds}
\label{app:lower}
Throughout this appendix, $\varepsilon\sim\mathcal N(0,\sigma^2)$ is independent of $X$ and, except in \cref{lem:testing}, $\PP_X$ is the uniform distribution on $\Omega$, so that $\|g\|^2_{L^2(\PP_X)}=\|g\|^2_{L^2(\Omega)}/|\Omega|$.

\subsection{Notation and Fourier setting}
\label{app:tools}
Throughout the appendices, $C,c>0$ may change from line to line and depend only on $d$, $s$, $r$, $L$, $\kappa$, $\Omega$ and $\D$; any additional dependence is stated locally.

\paragraph{Fourier model.}
Write $e_k(x)=e^{i\pi\langle k,x\rangle/(2L)}$, so that $\phi=(\overline{e_k})_{\|k\|_\infty\leq m}$ and $f_\theta=\sum_k\theta_ke_k$.
For constant-coefficient $\D$, $\D(f_\theta)=f_{D\theta}$ and $\|\D(f_\theta)\|^2_{L^2(\Omega)}=(D\theta)^\ast G(D\theta)$, and $P_r(z)=\sum_{|\alpha|=r}a_\alpha z^\alpha$ is the principal symbol.
The estimator has coefficients $\hat\theta=A_n^{-1}v_n$, with $A_n$ and $v_n$ as in \cref{sec:algo}.
Its objective is strictly convex and invariant under $\theta_k\mapsto\overline{\theta_{-k}}$, because the data and the coefficients of $\D$ are real, $e_{-k}=\overline{e_k}$ and $S$ is even in $k$; uniqueness gives $\hat\theta_{-k}=\overline{\hat\theta_k}$, so $f_{\hat\theta}$ is real-valued.

\paragraph{Periodic functions.}
For $q\in\mathbb N^\ast$ and $L>0$, let $\T^q_L=\R^q/(4L\Z)^q$, identified with $[-2L,2L]^q$ with periodic boundary conditions, and let $\T=\T^d_L$ with the $L$ of (A1).
For $g\in L^2(\T^q_L)$ and $k\in\Z^q$, set $\hat g_k=(4L)^{-q}\int_{\T^q_L}g\,\overline{e_k}$ and $\|g\|^2_{H^s(\T^q_L)}=(4L)^q\sum_{k\in\Z^q}(1+\|\pi k/(2L)\|_2^2)^s|\hat g_k|^2$.
Since $c(1+\|k\|_2^{2s})\leq(1+\|\pi k/(2L)\|_2^2)^s\leq C(1+\|k\|_2^{2s})$, the norm on $\T$ is equivalent to $\|S\hat g\|_2$, with $S$ extended to all frequencies.
If $g_m$ keeps the frequencies $\|k\|_\infty\leq m$ of $g\in H^s(\T^q_L)$, then $\|g-g_m\|_{H^t(\T^q_L)}\leq Cm^{t-s}\|g\|_{H^s(\T^q_L)}$ for $0\leq t\leq s$.
For periodic $v$, every constant-coefficient operator of order $r$ satisfies $\|\D(v)\|_{L^2(\Omega)}\leq C\|v\|_{H^r(\T)}$.
When $s>d/2$, the constant $a_s=\sum_{k\in\Z^d}(1+\|k\|_2^{2s})^{-1}$ is finite and $\sup_x\phi(x)^\ast(\lambda S^2)^{-1}\phi(x)\leq a_s/\lambda$.

\paragraph{Other domains.}
Sobolev norms on an open set $U\subseteq\R^q$ are defined by the formula of \cref{sec:setting} over $U$, with $|x-y|^{q+2(s-\lfloor s\rfloor)}$ in the double integrals; $|v|_{s,U}$ denotes the seminorm given by $|v|^2_{s,U}=\sum_{|\alpha|=s}\|\partial^\alpha v\|^2_{L^2(U)}$ if $s$ is an integer, and by the double integrals of the norm otherwise; it vanishes on $\Pol_{\lceil s\rceil}$.
For $F\in L^1(\R^q)$, $\hat F(\zeta)=\int F(x)e^{-i\langle\zeta,x\rangle}dx$, extended to $L^2$ and to distributions as usual, and $\|F\|^2_{(t)}=\int(1+|\zeta|^2)^t|\hat F(\zeta)|^2d\zeta$ for $t\in\R$.
For $s\geq0$, $\|F\|_{(s)}$ is equivalent to $\|F\|_{H^s(\R^q)}$, by Plancherel's theorem applied to the derivatives of order $\lfloor s\rfloor$ and \citet[Proposition~3.4]{dinezza2012hitchhiker}.
If $U\subseteq\R^q$ is a bounded Lipschitz domain and $t\geq0$ is real, there is a linear operator $E$ with $Ef=f$ on $U$ and $\|Ef\|_{H^t(\R^q)}\leq C_{U,t}\|f\|_{H^t(U)}$, where $H^t(U)$ carries the norm of \cref{sec:setting} \citep[Definition~1.3.2.1 and Theorem~1.4.3.1]{grisvard1985elliptic}.
Finally, $\Pol_j:=\mathbb C_{j-1}[x_1,\ldots,x_d]$ denotes the polynomials of degree less than $j$, and a set $\Lambda\subset\R^q$ is \emph{$\tau$-separated} if $|\nu-\nu'|\geq\tau$ for all distinct $\nu,\nu'\in\Lambda$.

\subsection{Assouad's lemma for PDE-constrained classes}
All lower bounds follow from the following form of Assouad's lemma.

\begin{lem}[Assouad's lemma for testing families]
\label[lem]{lem:testing}
Let $\PP_X$ be a distribution on $\Omega$.
Let $\psi_1,\ldots,\psi_M\in H^s(\Omega)$ be real-valued, and let $b,B_0>0$ satisfy $\|\sum_jc_j\psi_j\|^2_{L^2(\PP_X)}\geq b\|c\|_2^2$ for $c\in\{-1,0,1\}^M$ and $\|\psi_j\|^2_{L^2(\PP_X)}\leq B_0$.
Let $a^2=16\sigma^2/(9nB_0)$ and $f_\omega=a\sum_j\omega_j\psi_j$ for $\omega\in\{0,1\}^M$.
\begin{itemize}
\item[(i)] If $f_\omega\in\F(\rho,\delta)$ for every $\omega\in\{0,1\}^M$, then $\M_n(\F(\rho,\delta))\geq\frac2{27}\frac b{B_0}\frac{\sigma^2M}n$.
\item[(ii)] The condition of (i) is satisfied if $\D$ is linear and there are $A_s,A_\D\geq0$ such that $\|\sum_jv_j\psi_j\|^2_{H^s(\Omega)}\leq A_s\|v\|_2^2$ and $\|\D(\sum_jv_j\psi_j)\|^2_{L^2(\Omega)}\leq A_\D\|v\|_2^2$ for $v\in\{0,1\}^M$, and if $16\sigma^2MA_s\leq9nB_0\rho^2$ and $16\sigma^2MA_\D\leq9nB_0\delta^2$.
\end{itemize}
\end{lem}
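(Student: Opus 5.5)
The plan is the classical Assouad reduction from estimation to testing on the hypercube $\{0,1\}^M$. The supremum over $\F(\rho,\delta)$ is bounded below by the average over the $2^M$ functions $f_\omega$, and the estimation error is then converted into a Hamming distance. The only departure from the textbook version is that the $\psi_j$ are not assumed orthogonal. I will handle this by projecting onto the hypercube and using the lower frame-type bound with constant $b$.

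\emph{Step 1 (reduction to Hamming loss).} Given any estimator $\hat f$, let $\hat\omega$ minimize $\|\hat f-f_\omega\|_{L^2(\PP_X)}$ over $\omega\in\{0,1\}^M$. The triangle inequality gives $\|f_{\hat\omega}-f_\omega\|\leq2\|\hat f-f_\omega\|$. Since $\hat\omega-\omega\in\{-1,0,1\}^M$, the assumption on $b$ yields
\[
\|f_{\hat\omega}-f_\omega\|^2=a^2\Big\|\sum_j(\hat\omega_j-\omega_j)\psi_j\Big\|^2_{L^2(\PP_X)}\geq a^2b\,\rho_H(\hat\omega,\omega),
\]
where $\rho_H$ is the Hamming distance. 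Combining the two inequalities, $\EE\|\hat f-f_\omega\|^2\geq\frac{a^2b}{4}\EE\rho_H(\hat\omega,\omega)$.

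\emph{Step 2 (Assouad and KL bound).} Assouad's lemma gives
\[
\max_\omega\EE_\omega\rho_H(\hat\omega,\omega)\geq\frac M2\big(1-\max\mathrm{TV}(\PP^n_\omega,\PP^n_{\omega'})\big),
\]
where the maximum of the total variation runs over pairs $\omega,\omega'$ that differ in one coordinate $j$. In that case the KL divergence between the two laws of $(X,Y)$ is $\EE[(a\psi_j(X))^2]/(2\sigma^2)$, because the design law is common and the noise is Gaussian. So the KL divergence between the $n$-sample laws is at most $na^2B_0/(2\sigma^2)$. By the choice $a^2=16\sigma^2/(9nB_0)$, this bound equals $8/9$.

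\emph{Step 3 (collecting constants).} Pinsker's inequality gives $\mathrm{TV}\leq\sqrt{(8/9)/2}=2/3$, hence $\max_\omega\EE\rho_H\geq M/6$. Therefore
\[
\M_n\geq\frac{a^2bM}{24}=\frac{16\sigma^2bM}{9\cdot24\,nB_0}=\frac2{27}\frac b{B_0}\frac{\sigma^2M}n,
\]
which is (i). This requires each $f_\omega$ to lie in the class, which is the hypothesis of (i).

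\emph{Step 4 (part (ii)).} For $\omega\in\{0,1\}^M$ we have $\|\omega\|_2^2\leq M$. Hence $\|f_\omega\|^2_{H^s}=a^2\|\sum_j\omega_j\psi_j\|^2_{H^s}\leq a^2A_sM$. This is at most $\rho^2$ exactly when $16\sigma^2MA_s\leq9nB_0\rho^2$. By linearity of $\D$, $\|\D(f_\omega)\|^2_{L^2(\Omega)}=a^2\|\D(\sum_j\omega_j\psi_j)\|^2\leq a^2A_\D M\leq\delta^2$ under the second condition.

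The only delicate point is Step 1, namely the non-orthogonality of the $\psi_j$. This is exactly why the hypothesis is stated for $c\in\{-1,0,1\}^M$ rather than for the coordinate vectors alone. The remaining steps are routine constant bookkeeping.
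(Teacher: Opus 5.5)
Your proposal is correct and follows the paper's proof essentially step for step. Both arguments use the nearest-point projection $\hat\omega$, the triangle inequality combined with the lower bound on $\{-1,0,1\}^M$, the KL bound $na^2B_0/(2\sigma^2)=8/9$ giving $\max_\omega\EE_\omega\mathrm{Ham}(\hat\omega,\omega)\geq M/6$, and the same check of (ii). The one cosmetic difference is that the paper cites the KL form of Assouad's lemma (Theorem~2.12(iv) of Tsybakov) directly, whereas you combine the total-variation form with Pinsker's inequality, which is how that KL form is derived.
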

\begin{proof}
(i) Let $\hat f$ be an estimator, let $\EE_\omega$ be the expectation when $f^\star=f_\omega$, and let $\hat\omega$ minimize $\|\hat f-f_\omega\|_{L^2(\PP_X)}$ over $\omega\in\{0,1\}^M$.
Since $f_{\hat\omega}-f_\omega=a\sum_j(\hat\omega_j-\omega_j)\psi_j$ with $\hat\omega-\omega\in\{-1,0,1\}^M$, the triangle inequality gives $a^2b\,\mathrm{Ham}(\hat\omega,\omega)\leq\|f_{\hat\omega}-f_\omega\|^2_{L^2(\PP_X)}\leq4\|\hat f-f_\omega\|^2_{L^2(\PP_X)}$.
For neighbours $\omega,\omega'$, the Kullback--Leibler divergence between the sample laws is $\frac n{2\sigma^2}\|f_\omega-f_{\omega'}\|^2_{L^2(\PP_X)}\leq\frac{na^2B_0}{2\sigma^2}=\frac89$, so \citet[Theorem~2.12(iv)]{tsybakov2009introduction} gives $\max_\omega\EE_\omega\mathrm{Ham}(\hat\omega,\omega)\geq\frac M2(1-\sqrt{4/9})=\frac M6$.
Hence $\max_\omega\EE_\omega\|\hat f-f_\omega\|^2_{L^2(\PP_X)}\geq a^2bM/24=\frac2{27}\frac b{B_0}\frac{\sigma^2M}n$.
Since the maximum over $\omega$ is at most the supremum over $\F(\rho,\delta)$, and $\hat f$ is arbitrary, this proves (i).

(ii) By linearity, $\D(f_\omega)=a\D(\sum_j\omega_j\psi_j)$, and $\|\omega\|_2^2\leq M$ gives $\|f_\omega\|^2_{H^s(\Omega)}\leq a^2A_sM\leq\rho^2$ and $\|\D(f_\omega)\|^2_{L^2(\Omega)}\leq a^2A_\D M\leq\delta^2$.
\end{proof}

\subsection{Testing family for general operators}
\begin{lem}[Bumps]
\label[lem]{lem:bumps}
Let $Q\subseteq\Omega$ be a cube of side $\ell_Q\leq1$, let $\psi\in C_c^\infty((-1/2,1/2)^d)$ be real-valued and nonzero, let $M\geq1$ be an integer, $h=\ell_Q/M$, and $\psi_j(x)=\psi((x-y_j)/h)$, where $y_1,\ldots,y_{M^d}$ are the centers of the $M^d$ subcubes of $Q$ of side $h$.
There is $C_\psi>0$, depending only on $\psi$, $d$, $s$ and $r$, such that, for every $v\in\R^{M^d}$ and $t\in\{r,s\}$,
\begin{gather*}
\Big\|\sum_jv_j\psi_j\Big\|^2_{L^2(\PP_X)}=\frac{h^d\|\psi\|^2_{L^2(\R^d)}}{|\Omega|}\|v\|_2^2,
\qquad
\Big\|\sum_jv_j\psi_j\Big\|^2_{H^t(\Omega)}\leq C_\psi h^{d-2t}\|v\|_2^2,\\
\max_{|\alpha|\leq r}\Big\|\partial^\alpha\sum_jv_j\psi_j\Big\|_{L^\infty}\leq C_\psi h^{-r}\|v\|_\infty.
\end{gather*}
\end{lem}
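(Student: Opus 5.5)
The three claims all follow from the disjointness of the supports together with the scaling $x\mapsto(x-y_j)/h$; the only point needing care is the fractional part of the $H^t$ norm when $t$ is not an integer.

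\emph{Disjoint supports.} Since $\psi$ is supported in a compact subset of $(-1/2,1/2)^d$, each $\psi_j$ is supported in a compact subset of the open subcube of side $h$ centered at $y_j$. These subcubes are pairwise disjoint and contained in $Q\subseteq\Omega$.

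\emph{$L^2$ identity.} Under the uniform distribution, the cross terms vanish and
\[
\Big\|\sum_jv_j\psi_j\Big\|^2_{L^2(\PP_X)}=\frac1{|\Omega|}\sum_jv_j^2\|\psi_j\|^2_{L^2}.
\]
The change of variables gives $\|\psi_j\|^2_{L^2}=h^d\|\psi\|^2_{L^2(\R^d)}$, which yields the stated identity.

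\emph{$L^\infty$ bound.} At each point at most one $\psi_j$ is nonzero, and $\partial^\alpha\psi_j=h^{-|\alpha|}(\partial^\alpha\psi)((\cdot-y_j)/h)$. Since $h\leq\ell_Q\leq1$, we have $h^{-|\alpha|}\leq h^{-r}$ for $|\alpha|\leq r$. Hence the bound holds with $C_\psi\geq\max_{|\alpha|\leq r}\|\partial^\alpha\psi\|_\infty$.

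\emph{$H^t$ bound.} Set $F=\sum_jv_j\psi_j$. This is a smooth function on $\R^d$ supported in $Q$, so $\|F\|_{H^t(\Omega)}\leq\|F\|_{H^t(\R^d)}$: the double integral over $\Omega^2$ is at most the one over $(\R^d)^2$.

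For integer $t$, disjoint supports and scaling give, for each $|\alpha|\leq t$,
\[
\|\partial^\alpha F\|^2_{L^2}=\sum_jv_j^2h^{d-2|\alpha|}\|\partial^\alpha\psi\|^2_{L^2}\leq h^{d-2t}\|\partial^\alpha\psi\|^2_{L^2}\,\|v\|_2^2,
\]
using $h\leq1$.

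For non-integer $t$, the Gagliardo double integral is not additive over disjoint supports, so I would pass to the Fourier side. Write $\hat F(\zeta)=h^d\hat\psi(h\zeta)\sum_jv_je^{-i\langle y_j,\zeta\rangle}$. By the equivalence $\|F\|_{H^t(\R^d)}\asymp\|F\|_{(t)}$ from \cref{app:tools}, it suffices to bound
\[
\int(1+|\zeta|^2)^t|\hat F(\zeta)|^2\,d\zeta .
\]
I would substitute $\xi=h\zeta$ and use $(1+|\xi|^2/h^2)^t\leq h^{-2t}(1+|\xi|^2)^t$. This leaves
\[
h^{d-2t}\int(1+|\xi|^2)^t|\hat\psi(\xi)|^2\,|T(\xi)|^2\,d\xi,
\]
where $T(\xi)=\sum_jv_je^{-i\langle y_j,\xi\rangle/h}$ is a trigonometric polynomial. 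Its frequencies $y_j/h$ form a translate of a grid of step one.

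The remaining step, where I expect the main work, is to show
\[
\int w|T|^2\leq C\|v\|_2^2,\qquad w=(1+|\xi|^2)^t|\hat\psi|^2 .
\]
The weight $w$ is Schwartz since $\psi\in C_c^\infty$. I would split $\R^d$ into translates of $[0,2\pi]^d$ and bound $w$ on each cell by its supremum, which is summable over the cells. Parseval on each cell, for this unit-step grid, gives $\int_{\mathrm{cell}}|T|^2=(2\pi)^d\|v\|_2^2$. Summing over cells gives the bound with $C=(2\pi)^d\sum_{\mathrm{cells}}\sup_{\mathrm{cell}}w$, which depends only on $\psi$, $d$ and $t$.

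The same Fourier argument covers integer $t$ as well. Taking $C_\psi$ as the maximum over $t\in\{r,s\}$ of the constants above completes the proof.
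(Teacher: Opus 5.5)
Your proof is correct. The $L^2$ identity, the $L^\infty$ bound and the integer case of the $H^t$ bound match the paper's argument. You handle the fractional part differently.

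\textbf{The paper's route.} The paper stays in physical space. For each pair $(x,y)$, at most two of the differences $\partial^\beta\psi_j(x)-\partial^\beta\psi_j(y)$, $|\beta|=\lfloor t\rfloor$, are nonzero: one for the bump containing $x$ and one for the bump containing $y$. Hence $|\sum_jv_j(\partial^\beta\psi_j(x)-\partial^\beta\psi_j(y))|^2\leq2\sum_jv_j^2|\partial^\beta\psi_j(x)-\partial^\beta\psi_j(y)|^2$. Integrating gives $\|F\|^2_{H^t(\R^d)}\leq2\sum_jv_j^2\|\psi_j\|^2_{H^t(\R^d)}$, and scaling each single bump gives the factor $h^{d-2t}$. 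So the Gagliardo seminorm, though not additive over disjoint supports, is additive up to a factor $2$, and no Fourier analysis is needed.

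\textbf{Your route.} You use the norm equivalence $\|\cdot\|_{H^t(\R^d)}\asymp\|\cdot\|_{(t)}$ of \cref{app:tools} and the rapid decay of $\hat\psi$. You also use the lattice structure of the rescaled centers $y_j/h$: they are distinct points of a translate of $\Z^d$, so $|T|^2$ is $2\pi$-periodic in each coordinate and Parseval applies on every cell. All of these steps check out, including $(1+|\xi|^2/h^2)^t\leq h^{-2t}(1+|\xi|^2)^t$ for $h\leq1$.

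\textbf{What each buys.} The paper's argument is more elementary and gives the explicit constant $2\|\psi\|^2_{H^t(\R^d)}$. It uses only disjointness of supports and no lattice structure or norm equivalence. Your argument treats all real $t\geq0$ at once. It never uses disjointness of supports, so it would also cover overlapping bumps centered on a unit-step lattice, in the same spirit as \cref{lem:fourier}(ii) and \cref{lem:separated}.
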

\begin{proof}
The supports of the $\psi_j$ are disjoint, so at each point at most one term of the sum is nonzero; this gives the identity and, with $h\leq1$, the bound on derivatives.
For integer $t$, the squared $H^t(\R^d)$ norms of the terms therefore add.
For noninteger $t$ and $|\beta|=\lfloor t\rfloor$, at most two of the differences $\partial^\beta\psi_j(x)-\partial^\beta\psi_j(y)$ are nonzero for each pair $(x,y)$, so $|\sum_jv_j(\partial^\beta\psi_j(x)-\partial^\beta\psi_j(y))|^2\leq2\sum_jv_j^2|\partial^\beta\psi_j(x)-\partial^\beta\psi_j(y)|^2$, and integration gives $\|\sum_jv_j\psi_j\|^2_{H^t(\R^d)}\leq2\sum_jv_j^2\|\psi_j\|^2_{H^t(\R^d)}$.
The change of variables $x\mapsto y_j+hx$ multiplies $\|\partial^\alpha\psi\|^2_{L^2(\R^d)}$ by $h^{d-2|\alpha|}$ and the double integral by $h^{d-2t}$, so $\|\psi_j\|^2_{H^t(\R^d)}\leq h^{d-2t}\|\psi\|^2_{H^t(\R^d)}$ since $h\leq1$.
Restriction from $\R^d$ to $\Omega$ does not increase the norm, which gives the $H^t(\Omega)$ bound.
\end{proof}

\subsection{Proof of \texorpdfstring{\cref{thm:universal}}{the universal lower bound}}
\begin{proof}
Fix $Q$ and $\psi$ as in \cref{lem:bumps}, let $M\geq1$ be an integer, and let $f_\omega=a\sum_j\omega_j\psi_j$, with the functions $\psi_j$ of \cref{lem:bumps}, $b=B_0=h^d\|\psi\|^2_{L^2(\R^d)}/|\Omega|$ and $a^2=16\sigma^2/(9nB_0)$.
Since $a^2\leq C\sigma^2h^{-d}/n$, $\|\omega\|_2^2\leq M^d$, $\|\omega\|_\infty\leq1$ and $h^{-1}=M/\ell_Q$, \cref{lem:bumps} gives a constant $C_1\geq1$ such that
\begin{gather*}
\|f_\omega\|^2_{H^s(\Omega)}\leq C_1\frac{\sigma^2}nM^{2s+d},\qquad
\|f_\omega\|^2_{H^r(\Omega)}\leq C_1\frac{\sigma^2}nM^{2r+d},\\
\max_{|\alpha|\leq r}\|\partial^\alpha f_\omega\|_{L^\infty}\leq C_1\Big(\frac{\sigma^2}n\Big)^{1/2}M^{(2r+d)/2}.
\end{gather*}
Let $M_s=(n\rho^2/(C_1\sigma^2))^{1/(2s+d)}$, $M_r=(n\delta^2/(C_1A^2\sigma^2))^{1/(2r+d)}$ and $M=\lfloor\min(M_s,M_r)\rfloor$.
With $C_0\geq C_1\max(1,A^2)$, the sample-size assumptions give $\min(M_s,M_r)\geq1$, hence $M\geq\min(M_s,M_r)/2$, $\|f_\omega\|_{H^s(\Omega)}\leq\rho$ and $A\|f_\omega\|_{H^r(\Omega)}\leq\delta$.
Moreover, $M\leq M_s$ gives $\max_{|\alpha|\leq r}\|\partial^\alpha f_\omega\|_{L^\infty}\leq C\rho(\sigma^2/(n\rho^2))^{(s-r)/(2s+d)}$.
If $\D$ is nonlinear, this is at most $1$ when $n\geq C_0\sigma^2\rho^{(d+2r)/(s-r)}$ with $C_0$ large enough, and the assumption on $\D$ gives $\|\D(f_\omega)\|_{L^2(\Omega)}\leq A\|f_\omega\|_{H^r(\Omega)}$.
If $\D$ is linear, the same inequality follows by applying the assumption to $f_\omega/\max(1,\max_{|\alpha|\leq r}\|\partial^\alpha f_\omega\|_{L^\infty})$.
Hence every $f_\omega$ lies in $\F(\rho,\delta)$, and \cref{lem:testing}(i) gives
\begin{align*}
\M_n(\F(\rho,\delta))\geq\frac{2\sigma^2M^d}{27n}
&\geq c\frac{\sigma^2}n\min\Big\{\Big(\frac{n\rho^2}{\sigma^2}\Big)^{d/(2s+d)},\Big(\frac{n\delta^2}{\sigma^2}\Big)^{d/(2r+d)}\Big\}\\
&=c\min\Big\{\rho^2\Big(\frac{\sigma^2}{n\rho^2}\Big)^{\gamma(s,d)},\delta^{2d/(2r+d)}\Big(\frac{\sigma^2}n\Big)^{\gamma(r,d)}\Big\}.
\end{align*}
Since $\delta_n^{2d/(2r+d)}(\sigma^2/n)^{\gamma(r,d)}=\rho^2(\sigma^2/(n\rho^2))^{\gamma(s,d)}$, the minimum equals $\rho^2(\sigma^2/(n\rho^2))^{\gamma(s,d)}$ times $\min\{1,(\delta/\delta_n)^{2d/(2r+d)}\}$, which proves \cref{thm:universal}.
\end{proof}

\subsection{Proof of \texorpdfstring{\cref{cor:naive}}{the corollary on bounds vanishing with the residual}}
\begin{proof}
Suppose that such an estimator exists, and fix $\delta\in(0,1]$.
Since $s>r$, $\delta_n\to0$, so for $n$ large enough $\delta\geq\delta_n$ and the sample-size conditions of \cref{thm:universal} hold.
That theorem and the assumed bound then give $c\rho^2(\sigma^2/\rho^2)^{\gamma(s,d)}\leq Ch(\delta)+Cn^{\gamma(s,d)-\gamma(s,d-1)}$.
Letting $n\to\infty$ and using $\gamma(s,d)<\gamma(s,d-1)$ gives $c\rho^2(\sigma^2/\rho^2)^{\gamma(s,d)}\leq Ch(\delta)$, which fails for $\delta$ small enough.
Hence no estimator satisfies the bound for all $n\geq1$ and $\delta\in(0,1]$.
\end{proof}

\subsection{Localization and separated frequencies}
\begin{lem}[Localization and periodization]
\label[lem]{lem:fourier}
Let $s\geq0$, $q\in\mathbb N^\ast$ and $L>0$, and let $w,\chi\in C_c^\infty(\R^q)$ with $\mathrm{supp}\,\chi\subset(-2L,2L)^q$.
\begin{itemize}
\item[(i)] If $F\in H^s(\R^q)$, the $4L$-periodization $g$ of $\chi F$ satisfies $\|g\|_{H^s(\T^q_L)}\leq C_\chi\|F\|_{H^s(\R^q)}$.
\item[(ii)] If $\Lambda\subset\R^q$ is $\tau$-separated and $c\in\ell^2(\Lambda)$, then $\|w\sum_\nu c_\nu e^{i\langle\nu,\cdot\rangle}\|^2_{H^s(\R^q)}\leq C_{w,\tau}\sum_\nu(1+|\nu|^2)^s|c_\nu|^2$.
In particular, $\|g\|_{H^s(U)}\leq\|wg\|_{H^s(\R^q)}\leq C_w\|g\|_{H^s(\T^q_L)}$ for every $g\in H^s(\T^q_L)$ and every open $U\subseteq\{w=1\}$.
\item[(iii)] If $\mathbf M\in\R^{q\times q}$ is invertible, then $\|(wF)\circ\mathbf M\|_{H^s(\R^q)}\leq C_{w,\mathbf M}\|F\|_{H^s(\R^q)}$ for $F\in H^s(\R^q)$.
\end{itemize}
\end{lem}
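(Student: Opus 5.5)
The three parts are routine Fourier-analytic facts. I would prove each through the weighted norm $\|\cdot\|_{(s)}$, which is equivalent to $\|\cdot\|_{H^s(\R^q)}$, and through Peetre's inequality $(1+|\zeta|^2)^s\leq 2^{|s|}(1+|\zeta-\eta|^2)^{|s|}(1+|\eta|^2)^s$.

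\textbf{Part (i).} Since $\mathrm{supp}\,\chi\subset(-2L,2L)^q$, the periodization $g$ coincides with $\chi F$ on the fundamental cell. Its Fourier coefficients are therefore samples of the continuous Fourier transform:
\[
\hat g_k=(4L)^{-q}\widehat{\chi F}\big(\pi k/(2L)\big).
\]
So $\|g\|^2_{H^s(\T^q_L)}$ is a Riemann sum of $(1+|\zeta|^2)^s|\widehat{\chi F}(\zeta)|^2$ on the lattice $(\pi/(2L))\Z^q$. To control it by the integral I would use a sampling (Plancherel--Pólya) inequality. Choose $\tilde\chi\in C_c^\infty$ with $\tilde\chi=1$ on $\mathrm{supp}\,\chi$ and support still inside the cell. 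Then $\widehat{\chi F}=(2\pi)^{-q}\hat{\tilde\chi}\ast\widehat{\chi F}$. Cauchy--Schwarz against the rapidly decaying $\hat{\tilde\chi}$, combined with Peetre's inequality, gives
\[
\sum_k(1+|\zeta_k|^2)^s|\widehat{\chi F}(\zeta_k)|^2\leq C\int(1+|\eta|^2)^s|\widehat{\chi F}(\eta)|^2\,d\eta,
\]
because $\sum_k|\hat{\tilde\chi}(\zeta_k-\eta)|(1+|\zeta_k-\eta|^2)^{|s|}$ is bounded uniformly in $\eta$. It remains to bound $\|\chi F\|_{(s)}\leq C_\chi\|F\|_{(s)}$. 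This follows from $\widehat{\chi F}=(2\pi)^{-q}\hat\chi\ast\hat F$, Peetre's inequality and Young's inequality, since $(1+|\cdot|^2)^{|s|/2}\hat\chi\in L^1$.

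\textbf{Part (ii).} Here $\widehat{w\sum_\nu c_\nu e^{i\langle\nu,\cdot\rangle}}(\zeta)=\sum_\nu c_\nu\hat w(\zeta-\nu)$. By Cauchy--Schwarz with weights $|\hat w(\zeta-\nu)|$,
\[
\Big|\sum_\nu c_\nu\hat w(\zeta-\nu)\Big|^2\leq\Big(\sum_\nu|\hat w(\zeta-\nu)|\Big)\sum_\nu|c_\nu|^2|\hat w(\zeta-\nu)|.
\]
The first factor is bounded by $C_{w,\tau}$ uniformly in $\zeta$, because $\Lambda$ is $\tau$-separated and $\hat w$ decays rapidly: each unit cube contains at most $C\tau^{-q}$ frequencies. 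Multiplying by $(1+|\zeta|^2)^s$, applying Peetre's inequality with $\eta=\nu$ and integrating in $\zeta$ gives $C\sum_\nu(1+|\nu|^2)^s|c_\nu|^2$, since $(1+|\cdot|^2)^{|s|}\hat w\in L^1$. The "in particular" statement follows by applying this with $\Lambda=(\pi/(2L))\Z^q$ and $c_\nu=\hat g_k$, which recovers $\|g\|_{H^s(\T^q_L)}$ up to a constant. Since $wg=g$ on $U$, restricting to $U$ does not increase the intrinsic norm of \cref{sec:setting}, and this norm is equivalent to $\|\cdot\|_{(s)}$. For non-smooth $g$, I would argue by density of trigonometric polynomials.

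\textbf{Part (iii).} First, $\|wF\|_{H^s}\leq C_w\|F\|_{H^s}$ as in part (i). Second, composition with an invertible linear map acts on the Fourier side by $\widehat{G\circ\mathbf M}(\zeta)=|\det\mathbf M|^{-1}\hat G(\mathbf M^{-\top}\zeta)$. Together with $1+|\mathbf M^{\top}\eta|^2\leq C(1+|\eta|^2)$, the change of variables $\eta=\mathbf M^{-\top}\zeta$ gives the bound with a constant depending on $\mathbf M$.

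\textbf{Main obstacle.} The only delicate step is the sampling inequality in part (i), namely bounding a lattice sum by an integral for $\widehat{\chi F}$, which is not itself compactly supported. The reproducing trick with $\tilde\chi$ handles it. Part (ii) uses the same convolution and Peetre argument, so the same idea does double duty.
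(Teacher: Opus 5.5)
Your proposal is correct and follows essentially the same route as the paper. Both use Fourier coefficients of the periodization as lattice samples of $\widehat{\chi F}$, Cauchy--Schwarz against a rapidly decaying transform, Peetre's inequality, separation for (ii), and the Fourier formula for composition with $\mathbf M$ in (iii). The only difference is in (i): the paper does not use your auxiliary cutoff $\tilde\chi$ or the intermediate bound on $\|\chi F\|_{(s)}$. It samples $\widehat{\chi F}=(2\pi)^{-q}\hat\chi\ast\hat F$ directly at $\pi k/(2L)$, so the sampling step you flag as the main obstacle is already handled by the convolution with $\hat\chi$.
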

\begin{proof}
All three bounds follow from Peetre's inequality $(1+|a|^2)^s\leq2^s(1+|a-b|^2)^s(1+|b|^2)^s$ and the rapid decay of $\hat w$ and $\hat\chi$.
For (i), $\hat g_k=(4L)^{-q}\widehat{\chi F}(\pi k/(2L))$ and $\widehat{\chi F}=(2\pi)^{-q}\hat\chi\ast\hat F$; the Cauchy--Schwarz inequality gives $|\widehat{\chi F}(\pi k/(2L))|^2\leq C_\chi\int|\hat\chi(\pi k/(2L)-\zeta)||\hat F(\zeta)|^2d\zeta$, and Peetre's inequality with $\sup_\zeta\sum_k|\hat\chi(\pi k/(2L)-\zeta)|(1+|\pi k/(2L)-\zeta|^2)^s<\infty$ gives $\|g\|_{H^s(\T^q_L)}\leq C_\chi\|F\|_{(s)}$.
For (ii) with $\Lambda$ finite, the Fourier transform of the sum is $\sum_\nu c_\nu\hat w(\zeta-\nu)$, and the Cauchy--Schwarz inequality bounds its square by $(\sum_\nu|\hat w(\zeta-\nu)|)\sum_\nu|c_\nu|^2|\hat w(\zeta-\nu)|$; separation bounds the first factor uniformly, and Peetre's inequality gives the claim; the general case follows by completion, and the periodic case by taking $\nu=\pi k/(2L)$ and $c_\nu=\hat g_k$.
For (iii), the same convolution estimate gives $\|wF\|_{(s)}\leq C_w\|F\|_{(s)}$, and $\widehat{(wF)\circ\mathbf M}(\zeta)=|\det\mathbf M|^{-1}\widehat{wF}(\mathbf M^{-\top}\zeta)$ gives $\|(wF)\circ\mathbf M\|_{(s)}\leq C_{\mathbf M}\|wF\|_{(s)}$.
The equivalence of $\|\cdot\|_{(s)}$ and $\|\cdot\|_{H^s(\R^q)}$ concludes the proof of (i)--(iii).
\end{proof}

\begin{lem}[Separated frequencies]
\label[lem]{lem:separated}
Let $w\in C_c^\infty(\R^q)$ with $w\geq0$ and $w\not\equiv0$.
There is $\tau_w\geq1$ such that, for every finite $\tau_w$-separated $\Lambda\subset\R^q$ and $c\in\mathbb C^\Lambda$, $\frac12\hat w(0)\|c\|_2^2\leq\int w|\sum_{\nu\in\Lambda}c_\nu e^{i\langle\nu,\cdot\rangle}|^2\leq\frac32\hat w(0)\|c\|_2^2$.
\end{lem}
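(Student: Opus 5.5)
Expanding the integral turns the claim into a Gram-matrix estimate:
\[
\int w\Big|\sum_\nu c_\nu e^{i\langle\nu,\cdot\rangle}\Big|^2=\sum_{\nu,\nu'\in\Lambda}c_\nu\overline{c_{\nu'}}\int w(x)e^{i\langle\nu-\nu',x\rangle}dx=\sum_{\nu,\nu'}c_\nu\overline{c_{\nu'}}\,\hat w(\nu'-\nu),
\]
with the convention $\hat w(\zeta)=\int w e^{-i\langle\zeta,x\rangle}dx$. The diagonal terms give exactly $\hat w(0)\|c\|_2^2$, and $\hat w(0)=\int w>0$ because $w\geq0$ and $w\not\equiv0$. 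So it suffices to show that the off-diagonal part $T=\sum_{\nu\neq\nu'}c_\nu\overline{c_{\nu'}}\hat w(\nu'-\nu)$ satisfies $|T|\leq\frac12\hat w(0)\|c\|_2^2$ once the separation $\tau_w$ is large enough.

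To bound $T$, I will use the Schur test, i.e.\ $|c_\nu\overline{c_{\nu'}}|\leq\frac12(|c_\nu|^2+|c_{\nu'}|^2)$ and symmetry. This gives
\[
|T|\leq\|c\|_2^2\,\sup_{\nu\in\Lambda}\sum_{\nu'\in\Lambda\setminus\{\nu\}}|\hat w(\nu'-\nu)|.
\]
Since $w\in C_c^\infty$, $\hat w$ is Schwartz, so $|\hat w(\zeta)|\leq C_w(1+|\zeta|)^{-q-1}$. The remaining task is to bound $\sum_{\nu'\neq\nu}(1+|\nu'-\nu|)^{-q-1}$ uniformly over $\tau$-separated sets with $\tau\geq1$.

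This counting estimate is the only real step. I will use a packing argument. Balls of radius $\tau/2$ around the points of $\Lambda$ are disjoint. Hence the annulus $\{2^j\tau\leq|\nu'-\nu|<2^{j+1}\tau\}$ contains at most $C_q2^{jq}$ points of $\Lambda$, by a volume comparison with the enlarged annulus. Summing over dyadic shells gives
\[
\sum_{\nu'\neq\nu}(1+|\nu'-\nu|)^{-q-1}\leq C_q\sum_{j\geq0}2^{jq}(2^j\tau)^{-q-1}\leq C_q'\tau^{-q-1},
\]
a bound independent of $\nu$ and of $\Lambda$.

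It then remains to choose $\tau_w\geq1$ with $C_wC_q'\tau_w^{-q-1}\leq\frac12\hat w(0)$. This yields $|T|\leq\frac12\hat w(0)\|c\|_2^2$, and the two-sided bound follows.
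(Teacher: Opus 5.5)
Your proposal is correct and follows the paper's proof: expand into the Gram matrix $\hat w(\nu'-\nu)$, bound the off-diagonal part by the Schur test, and control $\sup_\nu\sum_{\nu'\neq\nu}|\hat w(\nu'-\nu)|$ using the decay of $\hat w$ and a packing count. The only difference is cosmetic. You count points in dyadic shells and use decay $(1+|\zeta|)^{-q-1}$, whereas the paper counts at most $(5k)^q$ points in each shell $k\tau\leq|\nu'-\nu|<(k+1)\tau$ and uses decay $(1+|\zeta|)^{-q-2}$ to make the sum converge.
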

\begin{proof}
The integral equals $\hat w(0)\|c\|_2^2+\sum_{\nu\ne\nu'}c_\nu\bar c_{\nu'}\hat w(\nu'-\nu)$.
By Schur's test, the off-diagonal sum is at most $\|c\|_2^2\sup_\nu\sum_{\nu'\ne\nu}|\hat w(\nu'-\nu)|$.
For $\tau\geq1$, at most $(5k)^q$ points of a $\tau$-separated set satisfy $k\tau\leq|\nu'-\nu|<(k+1)\tau$, and $|\hat w(\zeta)|\leq C_w(1+|\zeta|)^{-q-2}$, so this supremum is at most $5^qC_w\sum_{k\geq1}k^q(1+k\tau)^{-q-2}\leq5^q\frac{\pi^2}6C_w\tau^{-2}$.
Choosing $\tau_w$ such that this is at most $\hat w(0)/2$ proves both bounds.
\end{proof}

\subsection{Exact elliptic solutions decaying away from a hyperplane}
For $\xi\in\mathbb C^d$, write $\langle\xi,x\rangle=\sum_i\xi_ix_i$; then $\D(e^{\langle\xi,\cdot\rangle})=P(\xi)e^{\langle\xi,\cdot\rangle}$, and $\D(\mathrm{Re}\,g)=\mathrm{Re}\,\D(g)$ because the coefficients are real.
For $y\in\mathbb C$ and $\eta\in\R^{d-1}$, $P(y,i\eta)$ denotes $P$ evaluated at $(y,i\eta)\in\mathbb C^d$, and $P_j$ denotes the homogeneous part of degree $j$ of $P$.

\begin{lem}[Decaying exact solutions]
\label[lem]{lem:decay}
Let $\D$ be elliptic of order $r$ with constant real coefficients.
There are $c_+,C_z,K_2>0$ such that, for every $\eta\in\R^{d-1}$ with $|\eta|\geq K_2$, there is $z(\eta)\in\mathbb C$ with $P(z(\eta),i\eta)=0$, $\mathrm{Re}\,z(\eta)\geq c_+|\eta|$ and $|z(\eta)|\leq C_z|\eta|$.
In particular, $\psi_\eta(x)=\mathrm{Re}\,e^{z(\eta)x_1+i\langle\eta,x'\rangle}$, where $x=(x_1,x')$, satisfies $\D(\psi_\eta)=0$.
\end{lem}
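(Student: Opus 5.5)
We study the polynomial $y\mapsto P(y,i\eta)$ for large $|\eta|$ by homogeneity. Write $P=P_r+\sum_{j<r}P_j$. Setting $y=|\eta|w$ and $\eta=|\eta|\hat\eta$ with $|\hat\eta|=1$ gives
\[
P(|\eta|w,i\eta)=|\eta|^r\Big(P_r(w,i\hat\eta)+\sum_{j<r}|\eta|^{j-r}P_j(w,i\hat\eta)\Big).
\]
The plan has three steps: describe the roots of the limiting polynomial $p_{\hat\eta}(w)=P_r(w,i\hat\eta)$, transfer them to the perturbed polynomial by a continuity argument, and finally check the claim about $\psi_\eta$.

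\textbf{Roots of the limiting polynomial.} Its leading coefficient in $w$ is $a_{(r,0,\ldots,0)}=P_r(e_1)$, which is nonzero by ellipticity, so $p_{\hat\eta}$ has degree exactly $r$ with leading coefficient independent of $\hat\eta$. For any real $t$, $p_{\hat\eta}(it)=i^rP_r(t,\hat\eta)\neq0$ by ellipticity. Hence $p_{\hat\eta}$ has no roots on the imaginary axis.

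The key claim is that at least one root has positive real part. I would use the symmetry $P_r(-\zeta)=(-1)^rP_r(\zeta)$ with $r$ even (as stated in the paper). Since the coefficients are real, $\overline{p_{\hat\eta}(\bar w)}=P_r(w,-i\hat\eta)$, and by evenness this equals $P_r(-w,i\hat\eta)=p_{\hat\eta}(-w)$. So if $w$ is a root, then so is $-\bar w$, and the map $w\mapsto-\bar w$ exchanges the two open half-planes. Roots therefore occur in pairs across the imaginary axis, and each half-plane contains exactly $r/2$ roots counted with multiplicity; in particular the right half-plane contains at least one.

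By compactness of the unit sphere $\{|\hat\eta|=1\}$ and continuity of the roots in the coefficients, we get uniform constants $2c_+>0$ and $C_z/2$ such that some root satisfies $\mathrm{Re}\,w\geq2c_+$ and every root satisfies $|w|\leq C_z/2$. The lower bound on the real part uses that the minimum over roots of $|\mathrm{Re}\,w|$ is continuous and positive on the sphere. The upper bound on $|w|$ follows from Cauchy's bound, since the leading coefficient is fixed and nonzero.

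\textbf{Perturbation.} For $|\eta|\geq K_2$, the lower-order terms are $O(|\eta|^{-1})$ uniformly on compact $w$-sets. Apply Rouché's theorem on a closed curve enclosing the right-half-plane roots of $p_{\hat\eta}$, for instance the boundary of $\{\mathrm{Re}\,w>c_+,\ |w|<C_z\}$. On this boundary, $|p_{\hat\eta}|$ is bounded below uniformly in $\hat\eta$, because no root of $p_{\hat\eta}$ lies on it: all roots have $|\mathrm{Re}\,w|\geq2c_+$ and $|w|\leq C_z/2$, and the sphere is compact. So for $K_2$ large the perturbed polynomial has the same number of roots, at least one, inside the region. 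Such a root $w$ gives $z(\eta)=|\eta|w$, with $\mathrm{Re}\,z\geq c_+|\eta|$ and $|z|\leq C_z|\eta|$.

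\textbf{Exact solution.} We have $\D(e^{\langle\xi,\cdot\rangle})=P(\xi)e^{\langle\xi,\cdot\rangle}$ with $\xi=(z(\eta),i\eta)$ and $P(\xi)=0$. Taking real parts, which commutes with $\D$ because the coefficients are real, gives $\D(\psi_\eta)=0$.

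The main obstacle is the uniform lower bound on $|p_{\hat\eta}|$ along the Rouché contour, that is, the uniform separation of roots from the imaginary axis. Continuity of the root set as a multiset, combined with compactness of the sphere, handles it.
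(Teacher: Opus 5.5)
Your proof is correct and follows the paper's argument. It uses the same rescaling $t=1/|\eta|$, the same appeal to ellipticity and to the evenness of $r$ to put half the roots of the principal part strictly in the right half-plane, and the same compactness, Rouch\'e and Cauchy-bound steps. The only difference is cosmetic: the paper uses joint continuity in $(e',t)$ of the largest real part of the roots, followed by uniform continuity, whereas you apply Rouch\'e directly on the fixed contour $\partial\{\mathrm{Re}\,w>c_+,\ |w|<C_z\}$, which also gives $|z(\eta)|\leq C_z|\eta|$ without a separate argument.
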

\begin{proof}
For $e'\in\R^{d-1}$ with $|e'|=1$ and $t\in[0,1]$, let $p_{e',t}(y)=\sum_{j=0}^rt^{r-j}P_j(y,ie')$, a polynomial in $y$ with leading coefficient $P_r(1,0)\neq0$ and coefficients continuous in $(e',t)$.
Its roots are not imaginary at $t=0$, since $P_r(ia,ie')=i^rP_r(a,e')\neq0$ for $a\in\R$ by ellipticity.
Since $r$ is even and the coefficients are real, $\overline{P_r(y,ie')}=P_r(\bar y,-ie')=P_r(-\bar y,ie')$, so the roots of $p_{e',0}$ are symmetric with respect to the imaginary axis, and half of them have positive real part.
By Rouch\'e's theorem, the largest real part of the roots of $p_{e',t}$ is continuous in $(e',t)$; let $2c_+>0$ be its minimum over the compact set $\{t=0,|e'|=1\}$.
Uniform continuity gives $t_\star\in(0,1]$ such that this largest real part is at least $c_+$ for $t\leq t_\star$, and Cauchy's root bound gives $C_z$ bounding the modulus of every root for every $(e',t)$.
Let $K_2=1/t_\star$, and, for $|\eta|\geq K_2$, let $y$ be a root of $p_{e',t}$ with $e'=\eta/|\eta|$, $t=1/|\eta|$ and $\mathrm{Re}\,y\geq c_+$.
Since $p_{e',t}(y)=t^rP(y/t,i\eta)$, the number $z(\eta)=|\eta|y$ has the three properties, and $\D(\psi_\eta)=\mathrm{Re}(P(z(\eta),i\eta)e^{z(\eta)x_1+i\langle\eta,x'\rangle})=0$.
\end{proof}

\subsection{Proof of \texorpdfstring{\cref{prop:ellexact}}{the lower bound for exact elliptic solutions}}
\begin{proof}
A rotation followed by a translation of $\R^d$ preserves uniform design, changes $H^s$ norms by at most a constant factor, and maps $\D$ to an elliptic operator with constant real coefficients.
Applying one that sends $\mathbf e$ to $(1,0,\ldots,0)$ and a point of $V$ to the origin, with $\mathbf e$ and $V$ as in the definition of a flat face, we may therefore assume that $\Omega\subseteq(-\infty,0)\times[-L',L']^{d-1}$ with $L'=2\sqrt d\,L\geq\mathrm{diam}\,\Omega$, and that $(-a_0,0)\times Q'\subseteq\Omega$ for an open cube $Q'\subset\R^{d-1}$.
Let $w_0\in C_c^\infty(Q')$ with $0\leq w_0\leq1$ and $w_0\not\equiv0$, let $\tau=\tau_{w_0}$ be given by \cref{lem:separated} with $q=d-1$, and, for $K\geq K_2$, let $\Lambda=\{\eta\in\tau\Z^{d-1}:K\leq|\eta|\leq2K,\ \eta_1\geq K/2\}$, with $\psi_\eta$ given by \cref{lem:decay}.
The region defining $\Lambda$ contains a ball of radius $K/4$ and lies in a ball of radius $2K$, so $cK^{d-1}\leq\#\Lambda\leq CK^{d-1}$ for $K\geq C$.
Moreover, $\Lambda\cup(-\Lambda)$ is $\tau$-separated, since $|\eta+\tilde\eta|\geq\eta_1+\tilde\eta_1\geq K$ for $\eta,\tilde\eta\in\Lambda$.

Let $v\in\R^\Lambda$.
For fixed $x_1\in(-a_0,0)$, writing $\psi_\eta=\frac12(e^{z(\eta)x_1+i\langle\eta,x'\rangle}+e^{\overline{z(\eta)}x_1-i\langle\eta,x'\rangle})$, \cref{lem:separated} gives $\int w_0|\sum_\eta v_\eta\psi_\eta(x_1,x')|^2dx'\geq\frac14\hat w_0(0)\sum_\eta v_\eta^2e^{2\mathrm{Re}\,z(\eta)x_1}$.
Since $c_+K\leq\mathrm{Re}\,z(\eta)\leq2C_zK$, integration over $x_1\in(-a_0,0)$ gives $\|\sum_\eta v_\eta\psi_\eta\|^2_{L^2(\Omega)}\geq cK^{-1}\|v\|_2^2$ for $K\geq C$.
Conversely, $|\psi_\eta(x)|\leq e^{\mathrm{Re}\,z(\eta)x_1}$ and $\Omega\subseteq(-\infty,0)\times[-L',L']^{d-1}$ give $\|\psi_\eta\|^2_{L^2(\Omega)}\leq(2L')^{d-1}/(2c_+K)$.
Thus the assumptions of \cref{lem:testing} hold with $b=c/K$ and $B_0=C/K$.

Let $\chi\in C^\infty(\R)$ equal $1$ on $(-\infty,0]$ and $0$ on $[1,\infty)$, and let $w_1\in C_c^\infty(\R^{d-1})$ equal $1$ on $[-L',L']^{d-1}$.
Set $\varphi_\eta(x_1)=\chi(Kx_1)e^{z(\eta)x_1}$ and $G(x)=w_1(x')\sum_\eta v_\eta\varphi_\eta(x_1)e^{i\langle\eta,x'\rangle}$, so that $\sum_\eta v_\eta\psi_\eta=\mathrm{Re}\,G=\frac12(G+\overline G)$ on $\Omega$.
We have $\varphi_\eta(x_1)=\Phi_{z(\eta)/K}(Kx_1)$ with $\Phi_\xi(y)=\chi(y)e^{\xi y}$, and $z(\eta)/K$ lies in the compact set $\{\xi\in\mathbb C:\mathrm{Re}\,\xi\geq c_+,|\xi|\leq2C_z\}$, on which $\xi\mapsto\|\partial^j\Phi_\xi\|_{L^2(\R)}$ is continuous; hence $\|\partial^j\varphi_\eta\|^2_{L^2(\R)}\leq C_jK^{2j-1}$ for every integer $j\geq0$.
For $\alpha=(\alpha_1,\alpha')$, the Leibniz rule writes $\partial^\alpha G$ as a finite sum of terms $\partial^{\alpha'-\beta}w_1(x')\sum_\eta v_\eta\partial^{\alpha_1}\varphi_\eta(x_1)(i\eta)^\beta e^{i\langle\eta,x'\rangle}$ with $\beta\leq\alpha'$.
For fixed $x_1$, \cref{lem:fourier}(ii) at order $0$ in $x'$ bounds the squared $L^2(\R^{d-1})$ norm of such a term by $C\sum_\eta v_\eta^2|\partial^{\alpha_1}\varphi_\eta(x_1)|^2|\eta|^{2|\beta|}$, and integration in $x_1$ gives $\|\partial^\alpha G\|^2_{L^2(\R^d)}\leq CK^{2|\alpha|-1}\|v\|_2^2$.
Hence $\|G\|^2_{(j)}\leq CK^{2j-1}\|v\|_2^2$ for $j\in\{0,\lceil s\rceil\}$, and H\"older's inequality in the Fourier integral, $\|G\|^2_{(s)}\leq\|G\|_{(0)}^{2(1-s/\lceil s\rceil)}\|G\|_{(\lceil s\rceil)}^{2s/\lceil s\rceil}$, gives $\|\sum_\eta v_\eta\psi_\eta\|^2_{H^s(\Omega)}\leq\|G\|^2_{H^s(\R^d)}\leq C\|G\|^2_{(s)}\leq CK^{2s-1}\|v\|_2^2$.

Every $\psi_\eta$ solves $\D(\psi_\eta)=0$, so \cref{lem:testing}(ii) applies with $A_s=CK^{2s-1}$, $A_\D=0$ and $M=\#\Lambda\leq CK^{d-1}$ as soon as $C\sigma^2K^{2s+d-1}\leq n\rho^2$.
Take $K=c_\star(n\rho^2/\sigma^2)^{1/(2s+d-1)}$ with $c_\star$ small enough; $n\geq C_0\sigma^2/\rho^2$ with $C_0$ large makes $K$ large enough for the construction.
Since $b/B_0\geq c$ and $\#\Lambda\geq cK^{d-1}$, \cref{lem:testing}(i) gives $\M_n(\F(\rho,0))\geq c\sigma^2K^{d-1}/n=c\rho^2(\sigma^2/(n\rho^2))^{\gamma(s,d-1)}$.
As the rotation and translation only change constants, this proves \cref{prop:ellexact}.
\end{proof}

\subsection{Normal form of hyperbolic operators}
For $r=2$, replacing $\D$ by $-\D$ changes neither $\F(\rho,\delta)$, nor~\eqref{eq:estimator}, nor $\mathcal N(\lambda,\mu)$, so we assume that $(a_{ij})$ has exactly one positive eigenvalue.
The canonical operators are $\Dc=\partial_1$ ($r=1$) and $\Dc=\partial_1^2-\sum_{i\geq2}\partial_i^2$ ($r=2$).

\begin{lem}[Normal form]
\label[lem]{lem:normal}
Let $\D$ be hyperbolic of order $r\in\{1,2\}$ with constant real coefficients.
There are $\beta\in\R^d$, an invertible $\mathbf M\in\R^{d\times d}$ and $c_0\in\R$, with $c_0=0$ if $r=1$, such that, for every open $V$ and $f\in H^r(V)$, $F(y)=e^{-\langle\beta,\mathbf My\rangle}f(\mathbf My)$ satisfies $\D(f)(\mathbf My)=e^{\langle\beta,\mathbf My\rangle}(\Dc F+c_0F)(y)$ for almost every $y\in\mathbf M^{-1}V$.
The first column of $\mathbf M$ is $b$ if $r=1$, and $(a_{ij})=\mathbf MJ_0\mathbf M^\top$ with $J_0=\mathrm{diag}(1,-1,\ldots,-1)$ if $r=2$.
In particular, for $u=(u_1,u')\in\R\times\R^{d-1}$, $x\mapsto e^{\langle\beta,x\rangle}\cos\langle\mathbf M^{-\top}u,x\rangle$ solves $\D(f)=0$ if $u_1=0$ ($r=1$) or $u_1^2=|u'|^2+c_0$ ($r=2$).
\end{lem}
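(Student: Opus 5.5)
We reduce $\D$ to its canonical form $\Dc$ in two algebraic stages: first a linear change of variables $x=\mathbf My$ acting on the principal part, then an exponential gauge $f=e^{\langle\beta,\cdot\rangle}F$ removing the first-order terms. Since all coefficients are constant, the identity is pointwise algebra on smooth functions. It then extends to $f\in H^r(V)$ because both sides are continuous from $H^r$ to $L^2_{\mathrm{loc}}$ and smooth functions are dense in $H^r$ on the open set (locally, which suffices for an a.e. identity). The chain rule for $F(y)=e^{-\langle\beta,\mathbf My\rangle}f(\mathbf My)$ is also valid in $H^r$ under linear changes of variables.

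\emph{Case $r=1$.} Here $\D f=\langle b,\nabla f\rangle+b_0f$ with $b\neq0$. Choose $\mathbf M$ invertible with first column $b$, completing $b$ to a basis. For $g(y)=f(\mathbf My)$ we get $\partial_{y_1}g(y)=\langle b,\nabla f(\mathbf My)\rangle$, so $\D f(\mathbf My)=\partial_1g+b_0g$. Take $\beta$ with $\langle\beta,b\rangle=-b_0$, for instance $\beta=-b_0b/|b|^2$. Then $g=e^{\langle\beta,\mathbf My\rangle}F$ gives $\partial_1g=e^{\langle\beta,\mathbf My\rangle}(\partial_1F+\langle\beta,b\rangle F)$. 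Hence $\partial_1g+b_0g=e^{\langle\beta,\mathbf My\rangle}\partial_1F$, and $c_0=0$.

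\emph{Case $r=2$.} After the sign normalization, $(a_{ij})$ is symmetric with one positive and $d-1$ negative eigenvalues. Sylvester's law of inertia gives an invertible $\mathbf M$ with $(a_{ij})=\mathbf MJ_0\mathbf M^\top$. For $g=f\circ\mathbf M$, the identities $\nabla_yg=\mathbf M^\top\nabla f$ and $\nabla^2_yg=\mathbf M^\top\nabla^2f\,\mathbf M$ give
\[
\mathrm{tr}(J_0\nabla_y^2g)=\mathrm{tr}(\mathbf MJ_0\mathbf M^\top\nabla^2f)=\sum a_{ij}\partial_i\partial_jf.
\]
Therefore $\D f(\mathbf My)=\Dc g+\langle \tilde b,\nabla_y g\rangle+b_0g$ with $\tilde b=\mathbf M^{-1}b$. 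Next write $g=e^{\langle\gamma,y\rangle}F$ with $\gamma=\mathbf M^\top\beta$, so that $e^{\langle\gamma,y\rangle}=e^{\langle\beta,\mathbf My\rangle}$. Then
\[
\Dc g=e^{\langle\gamma,y\rangle}\bigl(\Dc F+2\langle J_0\gamma,\nabla F\rangle+\langle\gamma,J_0\gamma\rangle F\bigr),\qquad
\nabla g=e^{\langle\gamma,y\rangle}(\nabla F+\gamma F).
\]
The first-order terms cancel when $2J_0\gamma+\tilde b=0$, that is, $\gamma=-J_0\tilde b/2$ (using $J_0^{-1}=J_0$). This gives $c_0=\langle\gamma,J_0\gamma\rangle+\langle\tilde b,\gamma\rangle+b_0$, a real constant, and $\beta=\mathbf M^{-\top}\gamma$.

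\emph{Explicit solutions.} Put $F(y)=\cos\langle u,y\rangle$. Then $\partial_1F=0$ exactly when $u_1=0$, for $r=1$. For $r=2$, $\Dc F+c_0F=(-u_1^2+|u'|^2+c_0)F$, which vanishes when $u_1^2=|u'|^2+c_0$. Transport back with $y=\mathbf M^{-1}x$: since $\langle u,\mathbf M^{-1}x\rangle=\langle\mathbf M^{-\top}u,x\rangle$, we get $f(x)=e^{\langle\beta,x\rangle}\cos\langle\mathbf M^{-\top}u,x\rangle$ with $\D f=0$.

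The only step needing care is the Sobolev justification of the chain rule and the density argument. It is standard because the maps are linear and the weights are smooth, so I do not expect a genuine obstacle. The main bookkeeping risk is getting the transposes right when relating $\gamma$ and $\beta$.
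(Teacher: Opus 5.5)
Your proof is correct and is essentially the paper's argument: the paper applies the exponential gauge $f=e^{\langle\beta,\cdot\rangle}g$ in the original variables first and then changes variables (with $\mathbf M=\mathbf Q|\Lambda|^{1/2}$ from the spectral theorem), whereas you change variables first and then gauge in $y$. Your choice $\gamma=\mathbf M^\top\beta=-J_0\mathbf M^{-1}b/2$ gives exactly the paper's $\beta=-\tfrac12\mathbf A^{-1}b$ and $c_0=b_0-\tfrac14\langle b,\mathbf A^{-1}b\rangle$ with $\mathbf A=(a_{ij})$, and your density argument just makes explicit the paper's remark that these identities hold for weak derivatives.
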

\begin{proof}
Write $f=e^{\langle\beta,\cdot\rangle}g$, so that $F=g\circ\mathbf M$.
For $r=1$, take $\beta=-b_0b/|b|^2$ and any invertible $\mathbf M$ with first column $b$: then $\D(f)=e^{\langle\beta,\cdot\rangle}\langle b,\nabla g\rangle$ and $\partial_1F(y)=\langle b,\nabla g(\mathbf My)\rangle$.
For $r=2$, let $\mathbf A=(a_{ij})$; the spectral theorem gives $\mathbf A=\mathbf Q\Lambda\mathbf Q^\top$ with $\mathbf Q$ orthogonal and $\Lambda=\mathrm{diag}(\lambda_1,\ldots,\lambda_d)$, $\lambda_1>0>\lambda_2,\ldots,\lambda_d$, so that $\mathbf M=\mathbf Q|\Lambda|^{1/2}$ satisfies $\mathbf A=\mathbf MJ_0\mathbf M^\top$.
With $\beta=-\frac12\mathbf A^{-1}b$ and $c_0=b_0-\frac14\langle b,\mathbf A^{-1}b\rangle$, the Leibniz rule gives $\D(f)=e^{\langle\beta,\cdot\rangle}(\mathrm{tr}(\mathbf A\nabla^2g)+c_0g)$, and $\nabla^2F(y)=\mathbf M^\top\nabla^2g(\mathbf My)\mathbf M$ gives $\Dc F(y)=\mathrm{tr}(J_0\nabla^2F(y))=\mathrm{tr}(\mathbf A\nabla^2g(\mathbf My))$.
These identities hold for weak derivatives.
Finally, $F(y)=\cos\langle u,y\rangle$ gives $(\Dc+c_0)F=-u_1\sin\langle u,\cdot\rangle$ if $r=1$ and $(|u'|^2+c_0-u_1^2)\cos\langle u,\cdot\rangle$ if $r=2$, which vanish under the stated conditions.
\end{proof}

\subsection{Plane-wave testing families for hyperbolic operators}
Let $\beta$, $\mathbf M$ and $c_0$ be given by \cref{lem:normal}, write $u=(u_1,u')\in\R\times\R^{d-1}$, and let $p(u)=iu_1$ if $r=1$ and $p(u)=|u'|^2+c_0-u_1^2$ if $r=2$.
For $\zeta=\mathbf M^{-\top}u$ and $f(x)=e^{\langle\beta,x\rangle+i\langle\zeta,x\rangle}$, the function $F$ of \cref{lem:normal} (which applies to complex-valued functions by linearity) is $F(y)=e^{i\langle u,y\rangle}$, and $\Dc F+c_0F=p(u)F$; hence $\D(f)=p(u)f$ on $\Omega$.

\begin{lem}[Plane waves]
\label[lem]{lem:planewaves}
Let $\D$ be hyperbolic of order $r\in\{1,2\}$ with constant real coefficients, and let $C_1>0$.
There are $\tau\geq1$ and $b,B_0,C>0$, with $C$ depending also on $C_1$, such that the following holds for every $K\geq1$ and every finite $U\subset\R^d$ with $|u|\leq C_1K$ for $u\in U$ and $|u-\tilde u|\geq\tau$ for distinct $u,\tilde u\in U$ and $|u+\tilde u|\geq\tau$ for all $u,\tilde u\in U$.
The functions $\psi_u(x)=e^{\langle\beta,x\rangle}\cos\langle\mathbf M^{-\top}u,x\rangle$ satisfy $\|\psi_u\|^2_{L^2(\PP_X)}\leq B_0$ and, for every $v\in\R^U$,
\begin{gather*}
\Big\|\sum_uv_u\psi_u\Big\|^2_{L^2(\PP_X)}\geq b\|v\|_2^2,\qquad
\Big\|\sum_uv_u\psi_u\Big\|^2_{H^s(\Omega)}\leq CK^{2s}\|v\|_2^2,\\
\Big\|\D\Big(\sum_uv_u\psi_u\Big)\Big\|^2_{L^2(\Omega)}\leq C\max_{u\in U}|p(u)|^2\|v\|_2^2.
\end{gather*}
\end{lem}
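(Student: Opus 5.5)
The plan is to prove the four bounds of \cref{lem:planewaves} one at a time. Each follows from an earlier lemma once the exponential weight $e^{\langle\beta,x\rangle}$ is absorbed into a fixed smooth cutoff.

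\textbf{Lower $L^2$ bound.} Write $\cos\langle\mathbf M^{-\top}u,x\rangle=\frac12(e^{i\langle\zeta_u,x\rangle}+e^{-i\langle\zeta_u,x\rangle})$ with $\zeta_u=\mathbf M^{-\top}u$. Then $\sum_uv_u\psi_u=\frac12e^{\langle\beta,x\rangle}\sum_{\nu\in\Lambda}c_\nu e^{i\langle\nu,x\rangle}$, where $\Lambda=\{\pm\zeta_u:u\in U\}$ and $c_{\pm\zeta_u}=v_u$.
\begin{itemize}
\item The separation hypotheses $|u-\tilde u|\geq\tau$ and $|u+\tilde u|\geq\tau$ make $\Lambda$ a $\tau/\|\mathbf M^\top\|$-separated set of $2\#U$ distinct points. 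The second condition with $u=\tilde u$ gives $|u|\geq\tau/2$, so $\zeta_u\neq-\zeta_u$.
\item Fix $w\in C_c^\infty(\Omega)$ with $0\leq w\leq1$ and $w\not\equiv0$; this is possible since $\Omega$ is open.
\item Take $\tau$ large enough that $\tau/\|\mathbf M^\top\|\geq\tau_w$, with $\tau_w$ from \cref{lem:separated} for $q=d$.
\item Uniform design and $e^{2\langle\beta,x\rangle}\geq c$ on the bounded set $\Omega$ then give
\[
\Big\|\sum_uv_u\psi_u\Big\|^2_{L^2(\PP_X)}\geq\frac{c}{|\Omega|}\int w\Big|\sum_\nu c_\nu e^{i\langle\nu,\cdot\rangle}\Big|^2\geq c\,\hat w(0)\|v\|_2^2 .
\]
\end{itemize}

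\textbf{Upper $L^2$ bound.} This is immediate: $|\psi_u|\leq e^{|\beta|\sqrt dL}$ on $\Omega$, so $B_0$ is that constant squared.

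\textbf{Sobolev bound.} Take $w_1\in C_c^\infty(\R^d)$ equal to $1$ on $[-L,L]^d\supseteq\Omega$, and set $W(x)=w_1(x)e^{\langle\beta,x\rangle}\in C_c^\infty$. Restriction to $\Omega$ does not increase the norm, and \cref{lem:fourier}(ii) with weight $W$ and the separated set $\Lambda$ gives
\[
\Big\|\sum_uv_u\psi_u\Big\|^2_{H^s(\Omega)}\leq\Big\|W\sum_\nu c_\nu e^{i\langle\nu,\cdot\rangle}\Big\|^2_{H^s(\R^d)}\leq C\sum_\nu(1+|\nu|^2)^s|c_\nu|^2 .
\]
Since $|\nu|\leq\|\mathbf M^{-\top}\|C_1K$ and $K\geq1$, the right-hand side is at most $CK^{2s}\|v\|_2^2$.

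\textbf{PDE residual bound.} By the computation preceding the lemma, $\D(e^{\langle\beta,\cdot\rangle+i\langle\zeta_u,\cdot\rangle})=p(u)e^{\langle\beta,\cdot\rangle+i\langle\zeta_u,\cdot\rangle}$. Also $p(-u)=\overline{p(u)}$ for $r=1$ and $p(-u)=p(u)$ for $r=2$, so $|p(-u)|=|p(u)|$. Hence
\[
\D\Big(\sum_uv_u\psi_u\Big)=\frac12e^{\langle\beta,\cdot\rangle}\sum_\nu\tilde c_\nu e^{i\langle\nu,\cdot\rangle},\qquad|\tilde c_\nu|\leq\max_{u\in U}|p(u)|\,|v_u| .
\]
Applying \cref{lem:fourier}(ii) at order $0$ with the same weight $W$ bounds the squared $L^2(\Omega)$ norm by $C\max_{u\in U}|p(u)|^2\|v\|_2^2$.

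\textbf{Main obstacle.} The delicate step is the separation bookkeeping. We need $\Lambda$, which includes the negated frequencies, to be separated at the scale $\tau_w$ demanded by \cref{lem:separated} after the linear map $\mathbf M^{-\top}$. We also need the constants to depend only on $\D$, $\Omega$ and $C_1$, not on $K$ or $U$. Choosing $\tau$ via $\|\mathbf M^\top\|$ and $\tau_w$ handles this. The $K$-dependence enters only through $\max|\nu|$, which is what keeps $b$ and $B_0$ independent of $K$.
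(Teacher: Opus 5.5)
Your proposal is correct and follows essentially the same route as the paper: split each cosine into the exponentials at $\pm\zeta_u$, obtain $\tau_w$-separation of $\Lambda$ by taking $\tau=\tau_w\max(1,\|\mathbf M\|_{\mathrm{op}})$, then apply \cref{lem:separated} for the lower bound and \cref{lem:fourier}(ii), with a cutoff times $e^{\langle\beta,\cdot\rangle}$, for the Sobolev and residual bounds. The differences are only cosmetic: the paper absorbs $e^{2\langle\beta,\cdot\rangle}$ into the weight $w$ rather than bounding it below, and it uses the reality of the coefficients (which gives $\overline{p(u)}$ on the $-\zeta_u$ mode) rather than your parity identity $|p(-u)|=|p(u)|$.
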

\begin{proof}
Let $w_0\in C_c^\infty(\Omega)$ with $0\leq w_0\leq1$ and $w_0\not\equiv0$, let $w=w_0e^{2\langle\beta,\cdot\rangle}$, and let $\tau=\tau_w\max(1,\|\mathbf M\|_{\mathrm{op}})$, where $\tau_w$ is given by \cref{lem:separated}.
Write $\zeta_u=\mathbf M^{-\top}u$.
The set $\Lambda=\{\pm\zeta_u:u\in U\}$ is $\tau_w$-separated, because $|\mathbf M^{-\top}(u-\tilde u)|\geq|u-\tilde u|/\|\mathbf M\|_{\mathrm{op}}$, and $|\zeta|\leq\|\mathbf M^{-1}\|_{\mathrm{op}}C_1K$ for $\zeta\in\Lambda$.
We have $\psi_u=\frac12e^{\langle\beta,\cdot\rangle}(e^{i\langle\zeta_u,\cdot\rangle}+e^{-i\langle\zeta_u,\cdot\rangle})$ and, by the identity above and the reality of the coefficients, $\D(\psi_u)=\frac12e^{\langle\beta,\cdot\rangle}(p(u)e^{i\langle\zeta_u,\cdot\rangle}+\overline{p(u)}e^{-i\langle\zeta_u,\cdot\rangle})$ on $\Omega$.
Since $0\leq w_0\leq1$, \cref{lem:separated} gives $\|\sum_uv_u\psi_u\|^2_{L^2(\Omega)}\geq\int w|\sum_u\frac{v_u}2(e^{i\langle\zeta_u,\cdot\rangle}+e^{-i\langle\zeta_u,\cdot\rangle})|^2\geq\frac14\hat w(0)\|v\|_2^2$, so $b=\hat w(0)/(4|\Omega|)$ works, and $B_0=e^{2\sqrt dL|\beta|}$ bounds $|\psi_u|^2$ on $\Omega\subseteq[-L,L]^d$.
Let $\chi\in C_c^\infty(\R^d)$ equal $1$ on $\Omega$.
On $\Omega$, $\sum_uv_u\psi_u$ and $\D(\sum_uv_u\psi_u)$ coincide with $\chi e^{\langle\beta,\cdot\rangle}$ times exponential sums over $\Lambda$ whose coefficients have moduli $|v_u|/2$ and $|p(u)v_u|/2$, respectively.
Thus \cref{lem:fourier}(ii) at orders $s$ and $0$ gives the last two bounds, since $(1+|\zeta|^2)^s\leq CK^{2s}$ for $\zeta\in\Lambda$.
\end{proof}

For $K\geq1$, let $\mathcal U'_K=\{u'\in\tau\Z^{d-1}:K\leq|u'|\leq2K,\ u'_1\geq K/2\}$, where $u'_1$ is the first coordinate of $u'$ and $\tau$ is given by \cref{lem:planewaves}.
As for $\Lambda$ in the proof of \cref{prop:ellexact}, $cK^{d-1}\leq\#\mathcal U'_K\leq CK^{d-1}$ for $K\geq C$, and $|u'+\tilde u'|\geq u'_1+\tilde u'_1\geq K$ for $u',\tilde u'\in\mathcal U'_K$.

\begin{lem}[Exact solutions]
\label[lem]{lem:exact}
Let $\D$ be hyperbolic of order $r\in\{1,2\}$ with constant real coefficients, and let $s\geq r$.
There are $c,C_0>0$ such that, for $n\geq C_0\sigma^2/\rho^2$, $\M_n(\F(\rho,0))\geq c\rho^2(\sigma^2/(n\rho^2))^{\gamma(s,d-1)}$.
\end{lem}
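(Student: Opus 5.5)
The plan is to apply \cref{lem:testing} to the plane-wave family of \cref{lem:planewaves}, with frequencies chosen on the characteristic set $p(u)=0$. Each $\psi_u$ is then an exact solution, so $A_\D=0$. For $r=1$, take $U=\{(0,u'):u'\in\mathcal U'_K\}$, which gives $p(u)=iu_1=0$. For $r=2$, take $U=\{(u_1(u'),u'):u'\in\mathcal U'_K\}$ with $u_1(u')=\sqrt{|u'|^2+c_0}$. This is real for $K$ large, since $|u'|\geq K$ and $c_0$ is fixed, and it gives $p(u)=0$. By \cref{lem:normal}, $\D(\psi_u)=0$ in both cases.

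\textbf{Checking the hypotheses of \cref{lem:planewaves}.} There are three conditions.
\begin{itemize}
\item[(a)] Size: $|u|\leq C_1K$ with $C_1=3$, say, because $|u'|\leq2K$ and $u_1\leq\sqrt{4K^2+|c_0|}$.
\item[(b)] Separation of distinct points: $|u-\tilde u|\geq|u'-\tilde u'|\geq\tau$, because $\mathcal U'_K\subset\tau\Z^{d-1}$ and the map $u'\mapsto u$ is injective.
\item[(c)] Separation from reflections: $|u+\tilde u|\geq|u'+\tilde u'|\geq u'_1+\tilde u'_1\geq K\geq\tau$ for $K\geq\tau$, which covers $u=\tilde u$ as well.
\end{itemize}
The one point to watch is that the $\tau$ defining $\mathcal U'_K$ is the same $\tau$ as in \cref{lem:planewaves}, and it is, by definition. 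This step also depends on $C_1$, but $C_1$ is a fixed constant.

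\textbf{Applying \cref{lem:testing}.} \Cref{lem:planewaves} gives $b$ and $B_0$ independent of $K$, $A_s=CK^{2s}$, and $A_\D=0$. The last condition of \cref{lem:testing}(ii) holds trivially for every $\delta\geq0$, including $\delta=0$. The $H^s$ condition reads $16\sigma^2MA_s\leq9nB_0\rho^2$ with $M=\#U\leq CK^{d-1}$, which reduces to $C\sigma^2K^{2s+d-1}\leq n\rho^2$. Take $K=c_\star(n\rho^2/\sigma^2)^{1/(2s+d-1)}$ with $c_\star$ small. The requirement $n\geq C_0\sigma^2/\rho^2$ with $C_0$ large then ensures that $K$ exceeds the thresholds needed for the counting bound $\#\mathcal U'_K\geq cK^{d-1}$, for real $u_1$, and for $K\geq\tau$.

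\textbf{Conclusion.} \Cref{lem:testing}(i) gives
\[
\M_n(\F(\rho,0))\geq c\,\frac{\sigma^2K^{d-1}}{n}=c\rho^2\Big(\frac{\sigma^2}{n\rho^2}\Big)^{\gamma(s,d-1)}.
\]
The main potential obstacle is the geometry of the $r=2$ case: the frequencies must be lifted to the hyperboloid while keeping both separation conditions. The projection argument in (b) and (c) handles this, since both distances are bounded below by the distances between the $u'$ components.
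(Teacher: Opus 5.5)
Your proposal is correct and follows the paper's proof essentially step by step: you lift $\mathcal U'_K$ to the characteristic set, get both separation conditions from the $u'$ components, apply \cref{lem:planewaves} and \cref{lem:testing} with $A_\D=0$, and take $K=c_\star(n\rho^2/\sigma^2)^{1/(2s+d-1)}$. The only difference is that the paper uses the cruder size bound $|u|\leq5K$ instead of your $3K$, which is immaterial.
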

\begin{proof}
For $K\geq C$, let $U=\{(u_1,u'):u'\in\mathcal U'_K\}$, with $u_1=0$ if $r=1$ and $u_1=(|u'|^2+c_0)^{1/2}$ if $r=2$, which is real since $|u'|^2\geq K^2\geq|c_0|$.
Then $p(u)=0$ and $|u|\leq5K$ for $u\in U$.
For $u,\tilde u\in U$, $|u+\tilde u|\geq|u'+\tilde u'|\geq K\geq\tau$, and $|u-\tilde u|\geq|u'-\tilde u'|\geq\tau$ if $u\neq\tilde u$.
Thus \cref{lem:planewaves} and \cref{lem:testing}(ii) apply with $M=\#U\leq CK^{d-1}$, $A_s=CK^{2s}$ and $A_\D=0$ as soon as $C\sigma^2K^{2s+d-1}\leq n\rho^2$.
Take $K=c_\star(n\rho^2/\sigma^2)^{1/(2s+d-1)}$ with $c_\star$ small enough, and $C_0$ large enough that $K\geq C$.
Since $b/B_0\geq c$ and $\#U\geq cK^{d-1}$, \cref{lem:testing}(i) gives $\M_n(\F(\rho,0))\geq c\sigma^2K^{d-1}/n=c\rho^2(\sigma^2/(n\rho^2))^{\gamma(s,d-1)}$.
\end{proof}

\begin{lem}[Frequencies near the characteristic set]
\label[lem]{lem:nearcone}
Let $\D$ be hyperbolic of order $r\in\{1,2\}$ with constant real coefficients, let $s\geq r$, and let $K_\star=(n\rho^3/(\sigma^2\delta))^{1/(3s+d-r)}$, with $K_\star=\infty$ if $\delta=0$.
There are $c,c_1,c_2,C_0>0$ such that, if $n\geq C_0\sigma^2/\rho^2$ and
\[
\max\Big\{K_\star,c_2\Big(\frac{n\rho^2}{\sigma^2}\Big)^{1/(2s+d)}\Big\}\leq K\leq c_1\Big(\frac{n\rho^2}{\sigma^2}\Big)^{1/(2s+d-1)},
\]
then $\M_n(\F(\rho,\delta))\geq c\rho^2K^{-2s}$.
\end{lem}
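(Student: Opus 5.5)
We combine \cref{lem:planewaves} with \cref{lem:testing}, now using frequencies that lie close to the characteristic set of $\D$ rather than on it. In normal form, the symbol vanishes on $\{u_1=0\}$ for $r=1$ and on the cone $\{u_1^2=|u'|^2+c_0\}$ for $r=2$. We keep $u'$ in a $(d-1)$-dimensional shell of radius $K$ and let $u_1$ vary in a thin band around the characteristic value. This produces about $K^{d-1}\cdot(\text{band width}/\tau)$ test functions with $|p(u)|$ small.

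\textbf{Construction.} Fix a width $w\geq\tau$, to be chosen. For $r=1$, take
\[
U=\{(u_1,u'):u'\in\mathcal U'_K,\ u_1\in\tau\Z\cap[K,K+w]\},
\]
with $u_1$ shifted away from $0$ so that $|u+\tilde u|\geq\tau$ holds. Then $|p(u)|=|u_1|\lesssim K+w$; if that is too large, use the band $[0,w]$ instead and check the $|u+\tilde u|$ condition through $u'_1\geq K/2$. The latter works directly: $|u+\tilde u|\geq|u'+\tilde u'|\geq K$. So for $r=1$ take $u_1\in\tau\Z\cap[0,w]$, which gives $|p(u)|\leq w$.

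For $r=2$, take
\[
u_1=(|u'|^2+c_0)^{1/2}+t,\qquad t\in\tau\Z\cap[0,w],
\]
which gives $|p(u)|=|2t(|u'|^2+c_0)^{1/2}+t^2|\leq CKw$ for $w\leq K$.

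In both cases $|p(u)|\leq CwK^{r-1}$. Separation $|u-\tilde u|\geq\tau$ holds because either the $u'$ differ, or the $u'$ agree and the $t$ differ by at least $\tau$. We also have $|u|\leq C_1K$ and $\#U\asymp K^{d-1}w/\tau$.

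\textbf{Applying \cref{lem:testing}(ii).} By \cref{lem:planewaves}, $A_s=CK^{2s}$, $A_\D=Cw^2K^{2r-2}$, $M\asymp K^{d-1}w$, and $b/B_0\geq c$. The two constraints are
\[
C\sigma^2K^{d-1}wK^{2s}\leq n\rho^2,\qquad C\sigma^2K^{d-1}w^3K^{2r-2}\leq n\delta^2.
\]
Given $K$, the first constraint fixes the natural choice $w=c\,n\rho^2/(\sigma^2K^{2s+d-1})$. This $w$ is at least $\tau$ exactly when $K\leq c_1(n\rho^2/\sigma^2)^{1/(2s+d-1)}$, which is the stated upper bound on $K$.

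Substituting into the second constraint gives
\[
\sigma^2K^{d+2r-3}\Big(\frac{n\rho^2}{\sigma^2K^{2s+d-1}}\Big)^3\lesssim n\delta^2,
\quad\text{i.e.}\quad
K^{6s+2d-2r}\gtrsim\frac{n^2\rho^6}{\sigma^4\delta^2}.
\]
This is $K\gtrsim K_\star$, matching the lower constraint after adjusting constants (build the slack into $c$ and $c_\star$).

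We also need $w\leq K$, so that $|u|\leq C_1K$ and the bound on $|p|$ hold. This is guaranteed by the other lower bound $K\geq c_2(n\rho^2/\sigma^2)^{1/(2s+d)}$: then $n\rho^2/(\sigma^2K^{2s+d-1})\leq c_2^{-(2s+d)}K$, and choosing the constant in $w$ small (or $c_2$ large) gives $w\leq K$.

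\textbf{Conclusion and main obstacle.} \cref{lem:testing}(i) then gives
\[
\M_n\geq c\sigma^2M/n\asymp c\sigma^2K^{d-1}w/n\asymp c\rho^2K^{-2s}.
\]
The main obstacle is bookkeeping the constants so that the three conditions hold at once: $w\geq\tau$, $w\leq K$, and the $\delta$ constraint. Each constant $c_1,c_2$ and the constant in $w$ must be fixed in the right order, and $C_0$ must make $K\geq C$ so that the shell count $\#\mathcal U'_K\geq cK^{d-1}$ is valid. The case $\delta=0$ needs a separate check, since then $K_\star=\infty$ and the hypothesis is vacuous.
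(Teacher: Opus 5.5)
Your proposal is correct and follows essentially the paper's argument. Your band width $w=c_\star(n\rho^2/\sigma^2)K^{-(2s+d-1)}$ is exactly the paper's $hK^{1-r}$. The $H^s$ constraint of \cref{lem:testing}(ii) reduces to $Cc_\star\le1$ and the residual constraint to $K\ge K_\star$ with $Cc_\star^3\le1$, while the two lower bounds on $K$ give $\tau\le w\le K$. The remaining differences are cosmetic. You center the $r=2$ band on the exact cone $(|u'|^2+c_0)^{1/2}$ and start it at $t=0$, which avoids the $|c_0|$ term that the paper absorbs via $h\ge1$; the paper instead takes $u_1=|u'|+t$ with $t\in[hK^{1-r},2hK^{1-r}]$.
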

\begin{proof}
Let $c_\star\in(0,1]$ be a constant fixed below, let $c_1=(c_\star/(2\tau))^{1/(2s+d-1)}$ and $c_2=(2c_\star)^{1/(2s+d)}$, and set $h=c_\star(n\rho^2/\sigma^2)K^{-(2s+d-r)}$.
The two bounds on $K$ then give $2\tau K^{r-1}\leq h\leq K^r/2$, and $C_0$ large enough gives $K\geq C$.
Let $U$ be the set of $u=(u_1,u')$ with $u'\in\mathcal U'_K$, $t\in\tau\Z\cap[hK^{1-r},2hK^{1-r}]$, and $u_1=t$ if $r=1$, $u_1=|u'|+t$ if $r=2$.
The interval for $t$ has length $hK^{1-r}\geq2\tau$, so it contains between $hK^{1-r}/(2\tau)$ and $2hK^{1-r}/\tau$ points, and $chK^{d-r}\leq\#U\leq ChK^{d-r}$.
Since $t\leq2hK^{1-r}\leq K$, $|u|\leq5K$ for $u\in U$.
Distinct pairs $(u',t)$ give $|u-\tilde u|\geq\tau$, and $|u+\tilde u|\geq|u'+\tilde u'|\geq K\geq\tau$.
Moreover, $|p(u)|=t\leq2h$ if $r=1$, and $|p(u)|=|c_0-2|u'|t-t^2|\leq|c_0|+8h+4h^2K^{-2}\leq|c_0|+10h\leq Ch$ if $r=2$, since then $h\geq2\tau K\geq1$.
Hence \cref{lem:planewaves} and \cref{lem:testing}(ii) apply with $M=\#U$, $A_s=CK^{2s}$ and $A_\D=Ch^2$ as soon as $C\sigma^2hK^{2s+d-r}\leq n\rho^2$ and $C\sigma^2h^3K^{d-r}\leq n\delta^2$.
The first condition reads $Cc_\star\leq1$.
Since $K\geq K_\star$, $C\sigma^2h^3K^{d-r}=Cc_\star^3(n^3\rho^6/\sigma^4)K^{-2(3s+d-r)}\leq Cc_\star^3n\delta^2$, so the second condition follows from $Cc_\star^3\leq1$.
Choosing $c_\star$ small enough, \cref{lem:testing}(i) gives $\M_n(\F(\rho,\delta))\geq c\sigma^2hK^{d-r}/n=cc_\star\rho^2K^{-2s}$.
\end{proof}

\subsection{Proof of the lower bound of \texorpdfstring{\cref{thm:hyperbolic}}{the hyperbolic theorem}}
\begin{proof}
Enlarge $C_0$ so that \cref{lem:exact,lem:nearcone} apply and $(n\rho^2/\sigma^2)^{1/((2s+d)(2s+d-1))}\geq c_2/c_1$, that is, $c_2(n\rho^2/\sigma^2)^{1/(2s+d)}\leq c_1(n\rho^2/\sigma^2)^{1/(2s+d-1)}$.
Since $\F(\rho,0)\subseteq\F(\rho,\delta)$, \cref{lem:exact} gives $\M_n(\F(\rho,\delta))\geq c\rho^2(\sigma^2/(n\rho^2))^{\gamma(s,d-1)}$.
Let $K_\star$ be as in \cref{lem:nearcone}, so that $\rho^2K_\star^{-2s}=\rho^2(\sigma^2\delta/(n\rho^3))^{2s/(3s+d-r)}$, and let $K$ be the maximum of $K_\star$ and $c_2(n\rho^2/\sigma^2)^{1/(2s+d)}$.
If $K\leq c_1(n\rho^2/\sigma^2)^{1/(2s+d-1)}$, \cref{lem:nearcone} gives
\[
\M_n(\F(\rho,\delta))\geq c\rho^2K^{-2s}\geq c\rho^2\min\Big\{\Big(\frac{\sigma^2\delta}{n\rho^3}\Big)^{2s/(3s+d-r)},\Big(\frac{\sigma^2}{n\rho^2}\Big)^{\gamma(s,d)}\Big\}.
\]
Otherwise, $K_\star>c_1(n\rho^2/\sigma^2)^{1/(2s+d-1)}$, so $\rho^2(\sigma^2\delta/(n\rho^3))^{2s/(3s+d-r)}=\rho^2K_\star^{-2s}$ is at most $c_1^{-2s}\rho^2(\sigma^2/(n\rho^2))^{\gamma(s,d-1)}$, and the exact-solution bound already controls this minimum.
In both cases, $\M_n(\F(\rho,\delta))$ is at least $c$ times the maximum, hence $c/2$ times the sum, of the exact-solution term and the minimum.
This proves the lower bound of \cref{thm:hyperbolic} for all $s\geq r$, $\delta\geq0$ and $n\geq C_0\sigma^2/\rho^2$.
\end{proof}

\section{Effective-dimension bounds}
\label{app:count}
All linear spaces are complex.
We write $Q_0=(-1,1)^d$, and $Q^\ast$ for the cube with the same center as a cube $Q$ and $6/5$ times its side, so that $Q_0^\ast=(-6/5,6/5)^d$.

\subsection{Counting by approximation}
\begin{lem}[Counting by approximation]
\label[lem]{lem:counting}
Assume (A1), and let $\D$ be linear with constant coefficients, $\lambda>0$ and $\mu\geq0$.
\begin{itemize}
\item[(i)] For Hermitian matrices $0\preceq A\preceq A'$ and $B\succ0$, $\mathrm{tr}((A+B)^{-1}A)\leq\mathrm{tr}((A'+B)^{-1}A')$.
\item[(ii)] Let $n:(0,1]\to[0,\infty)$ be measurable, and assume that, for every $\tau\in(0,1]$, there is a linear space $W_\tau\subseteq L^2(\Omega)$ of dimension at most $n(\tau)$ such that $\mathrm{dist}_{L^2(\Omega)}(f_\theta,W_\tau)^2\leq1/(3\kappa)$ for every $\theta$ with $\tau\lambda\|S\theta\|_2^2+\tau\mu\|\D(f_\theta)\|^2_{L^2(\Omega)}\leq1$ and $\kappa\|f_\theta\|^2_{L^2(\Omega)}\leq1$.
Then $\mathcal N(\lambda,\mu)\leq\int_0^1n(\tau)\,d\tau$.
In particular, if $n(\tau)\leq\sum_iA_i\tau^{-a_i}(1+\log(1/\tau))$ for finitely many $A_i\geq0$ and $a_i\in[0,1)$, then $\mathcal N(\lambda,\mu)\leq\sum_i2A_i(1-a_i)^{-2}$.
\end{itemize}
\end{lem}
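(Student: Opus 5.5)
For (i) and (ii) we reduce the effective dimension to counting generalized eigenvalues of $\Sigma$ relative to the penalty matrix. We then bound that count with the approximating spaces $W_\tau$ via a min--max argument, and finally integrate over $\tau$.

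\emph{Part (i).} Since $B\succ0$, the matrices $A+B$ and $A'+B$ are invertible. We write
\[
\mathrm{tr}((A+B)^{-1}A)=\mathrm{tr}(I)-\mathrm{tr}\big(B^{1/2}(A+B)^{-1}B^{1/2}\big).
\]
From $A+B\preceq A'+B$, operator monotonicity of inversion gives $(A+B)^{-1}\succeq(A'+B)^{-1}$. Conjugating by $B^{1/2}$ preserves this order, and taking traces yields the claimed inequality.

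\emph{Part (ii), reduction to eigenvalue counting.} Set $B=\lambda S^2+\mu D^\ast GD$. This matrix is positive definite, because $G\succeq0$ and $\lambda S^2\succ0$. Let $\nu_1\geq\nu_2\geq\cdots\geq0$ be the eigenvalues of $B^{-1/2}\Sigma B^{-1/2}$, that is, the generalized eigenvalues of $\Sigma$ relative to $B$. Then
\[
\mathcal N(\lambda,\mu)=\sum_j\frac{\nu_j}{1+\nu_j}.
\]
We use the pointwise bound
\[
\frac{\nu}{1+\nu}\leq\min(1,\nu)=\int_0^1\mathbf 1\{\nu\geq\tau\}\,d\tau .
\]
It gives $\mathcal N(\lambda,\mu)\leq\int_0^1 N(\tau)\,d\tau$, where $N(\tau)=\#\{j:\nu_j\geq\tau\}$. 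The integrand is measurable because $N$ is monotone. It therefore suffices to prove $N(\tau)\leq n(\tau)$ for every $\tau\in(0,1]$.

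\emph{Part (ii), the key step.} Fix $\tau\in(0,1]$ and suppose, for contradiction, that $N(\tau)>n(\tau)\geq\dim W_\tau$. By Courant--Fischer, there is a subspace $V$ with $\dim V=N(\tau)$ on which $\theta^\ast\Sigma\theta\geq\tau\,\theta^\ast B\theta$.

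Consider the linear map $V\to W_\tau$ given by $\theta\mapsto$ the $L^2(\Omega)$-orthogonal projection of $f_\theta$ onto $W_\tau$. Since $\dim V>\dim W_\tau$, this map has a nonzero element $\theta$ in its kernel. For this $\theta$ we have $\theta^\ast\Sigma\theta\geq\tau\theta^\ast B\theta>0$, so $f_\theta\neq0$ in $L^2(\PP_X)$, hence also in $L^2(\Omega)$.

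Rescale $\theta$ so that $\kappa\|f_\theta\|^2_{L^2(\Omega)}=1$. The density bound in (A1) gives $\theta^\ast\Sigma\theta=\|f_\theta\|^2_{L^2(\PP_X)}\leq\kappa\|f_\theta\|^2_{L^2(\Omega)}=1$. Therefore
\[
\tau\lambda\|S\theta\|_2^2+\tau\mu\|\D(f_\theta)\|^2_{L^2(\Omega)}=\tau\,\theta^\ast B\theta\leq\theta^\ast\Sigma\theta\leq1 .
\]
So $\theta$ is admissible in the hypothesis of (ii). Because $f_\theta\perp W_\tau$, the hypothesis then gives
\[
\frac1\kappa=\|f_\theta\|^2_{L^2(\Omega)}=\mathrm{dist}(f_\theta,W_\tau)^2\leq\frac1{3\kappa},
\]
which is a contradiction. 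This step is the crux of the proof. The only delicate points are the direction of the density inequality, which works since $\Sigma$ is dominated by $\kappa$ times the $L^2(\Omega)$ Gram form, and the injectivity of $\theta\mapsto f_\theta$ on $V$, which is supplied by $B\succ0$.

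\emph{The explicit bound.} Assume $n(\tau)\leq\sum_iA_i\tau^{-a_i}(1+\log(1/\tau))$ with $a_i\in[0,1)$. Direct computation gives
\[
\int_0^1\tau^{-a}\big(1+\log(1/\tau)\big)\,d\tau=\frac1{1-a}+\frac1{(1-a)^2}\leq\frac2{(1-a)^2}.
\]
Summing over $i$ yields $\mathcal N(\lambda,\mu)\leq\sum_i2A_i(1-a_i)^{-2}$.
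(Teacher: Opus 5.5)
Your proof is correct and follows the paper's argument: a Courant--Fischer subspace of dimension exceeding $\dim W_\tau$, a nonzero element with $f_\theta\perp W_\tau$ that contradicts the approximation hypothesis, and then integration over $\tau$. The only difference is that the paper first uses (i) to replace $\Sigma$ by $\kappa G$ and normalizes by $\kappa\|f_\theta\|^2_{L^2(\Omega)}+\tau\theta^\ast B\theta=1$, whereas you count the eigenvalues of $B^{-1/2}\Sigma B^{-1/2}$ directly and invoke $\Sigma\preceq\kappa G$ only in the admissibility check, so (i) is not needed for (ii) and the constant $1/3$ has slack.
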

\begin{proof}
(i) follows from $\mathrm{tr}((A+B)^{-1}A)=\mathrm{tr}(I)-\mathrm{tr}(B^{1/2}(A+B)^{-1}B^{1/2})$ and the monotonicity of the inverse for the Loewner order.
For (ii), let $B=\lambda S^2+\mu D^\ast GD\succ0$.
Since $\theta^\ast\Sigma\theta=\EE|f_\theta(X)|^2\leq\kappa\|f_\theta\|^2_{L^2(\Omega)}=\kappa\theta^\ast G\theta$, (i) gives $\mathcal N(\lambda,\mu)\leq\mathrm{tr}((\kappa G+B)^{-1}\kappa G)=\sum_i\nu_i/(1+\nu_i)$, where the $\nu_i$ are the eigenvalues of $\kappa B^{-1/2}GB^{-1/2}$.
As $\nu/(1+\nu)\leq\min(1,2\nu)=\int_0^1\mathbf 1_{\{2\nu>\tau\}}d\tau$, it suffices to show that $\#\{i:2\nu_i>\tau\}\leq n(\tau)$.
By the Courant--Fischer theorem, this number is the largest dimension of a subspace $V$ on which $2\kappa\theta^\ast G\theta>\tau\theta^\ast B\theta$ for every $\theta\neq0$.
If $\dim V>n(\tau)\geq\dim W_\tau$, some nonzero $\theta\in V$ has $f_\theta\perp W_\tau$; normalized by $\kappa\|f_\theta\|^2_{L^2(\Omega)}+\tau\theta^\ast B\theta=1$, it satisfies both constraints and $\kappa\|f_\theta\|^2_{L^2(\Omega)}>1/3$, whereas $\kappa\|f_\theta\|^2_{L^2(\Omega)}=\kappa\,\mathrm{dist}_{L^2(\Omega)}(f_\theta,W_\tau)^2\leq1/3$, a contradiction.
Finally, $\int_0^1\tau^{-a}(1+\log(1/\tau))\,d\tau=(1-a)^{-1}+(1-a)^{-2}\leq2(1-a)^{-2}$ for $a\in[0,1)$.
\end{proof}

\subsection{Polynomial approximation}
\begin{lem}[Moment projection]
\label[lem]{lem:poly}
Fix $w\in C_c^\infty(Q_0^\ast)$ with $\int w=1$.
For every integer $j\geq1$, there is a unique map $\Pi_j:L^2(Q_0^\ast)\to\Pol_j$ such that $\int(\Pi_jf)\,\partial^\alpha w=\int f\,\partial^\alpha w$ for $|\alpha|<j$; it is linear and bounded, and $\Pi_jp=p$ for $p\in\Pol_j$.
Let $s\geq1$ be real and $\bar s=\lceil s\rceil$.
\begin{itemize}
\item[(a)] For $f\in H^s(Q_0^\ast)$, $\|f-\Pi_{\bar s}f\|_{H^s(Q_0^\ast)}\leq C|f|_{s,Q_0^\ast}$; for every cube $P$ of side $g$ and $f\in H^s(P)$, $\inf_{p\in\Pol_{\bar s}}\|f-p\|_{L^2(P)}\leq Cg^s|f|_{s,P}$.
\item[(b)] For $f\in H^s(Q_0^\ast)$ and every homogeneous operator $\D_0$ of order $r\leq s$ with constant coefficients, $\|\D_0(f-\Pi_{\bar s}f)\|_{L^2(Q_0^\ast)}\leq C\|\D_0f\|_{L^2(Q_0^\ast)}$.
\end{itemize}
\end{lem}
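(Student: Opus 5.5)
Existence and uniqueness of $\Pi_j$ reduce to linear algebra on the finite-dimensional space $\Pol_j$. I will show that the bilinear pairing $(p,\alpha)\mapsto\int p\,\partial^\alpha w$, for $p\in\Pol_j$ and $|\alpha|<j$, is nondegenerate. The number of multi-indices with $|\alpha|<j$ equals $\dim\Pol_j$, so it suffices to show that $p\in\Pol_j$ with $\int p\,\partial^\alpha w=0$ for all $|\alpha|<j$ vanishes.

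Integrating by parts gives $\int p\,\partial^\alpha w=(-1)^{|\alpha|}\int(\partial^\alpha p)\,w$. Take $\alpha$ of maximal length among the monomials of $p$. Then $\partial^\alpha p$ is the nonzero constant $\alpha!\,c_\alpha$, and $\int w=1$ gives $c_\alpha=0$. By downward induction on the degree, $p=0$. So $\Pi_jf$ is defined by solving a square invertible system whose right-hand side $\big(\int f\,\partial^\alpha w\big)_\alpha$ is bounded by $C\|f\|_{L^2(Q_0^\ast)}$. This gives linearity and boundedness, and $\Pi_jp=p$ follows from uniqueness.

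For (a) I will use a Bramble--Hilbert argument. Because $\Pi_{\bar s}$ reproduces $\Pol_{\bar s}$, we have $f-\Pi_{\bar s}f=(I-\Pi_{\bar s})(f-q)$ for any $q\in\Pol_{\bar s}$. Boundedness of $\Pi_{\bar s}$ from $L^2$ into the finite-dimensional space $\Pol_{\bar s}$, where all norms are equivalent, then gives $\|f-\Pi_{\bar s}f\|_{H^s}\leq C\|f-q\|_{H^s}$. It remains to show the Deny--Lions inequality $\inf_q\|f-q\|_{H^s(Q_0^\ast)}\leq C|f|_{s,Q_0^\ast}$.

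I will prove this by the standard compactness argument. Suppose there is a sequence $f_k$ with $\Pi_{\bar s}f_k=0$, $\|f_k\|_{H^s}=1$ and $|f_k|_s\to0$. Rellich's theorem on the Lipschitz cube $Q_0^\ast$, applied to the lower-order terms, gives a subsequence converging in $H^s$ to some $f$ with $|f|_s=0$. Then $f$ is a polynomial of degree below $\bar s$. For noninteger $s$, a vanishing Gagliardo seminorm forces $\partial^\alpha f$ to be constant for $|\alpha|=\lfloor s\rfloor$, so $f\in\Pol_{\lfloor s\rfloor+1}=\Pol_{\bar s}$. Since also $\Pi_{\bar s}f=0$, we get $f=0$, contradicting $\|f\|_{H^s}=1$.

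The second claim of (a) follows by scaling. Map $P$ affinely onto a cube containing $Q_0^\ast$, or simply onto $(0,1)^d$ and use the same Deny--Lions inequality there. The $L^2$ norm scales as $g^{d/2}$ and the seminorm $|\cdot|_s$ as $g^{d/2-s}$, which gives the factor $g^s$.

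Part (b) is the main obstacle, because the right-hand side involves only $\D_0f$ and not the full seminorm. The idea is that $\D_0$ homogeneous of order $r\leq s$ maps $\Pol_{\bar s}$ into $\Pol_{\bar s-r}$. The key step is to show that $\D_0\Pi_{\bar s}f$ is itself controlled by $\D_0f$.

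I would first try the commutation identity $\D_0\Pi_{\bar s}f=\Pi_{\bar s-r}\D_0f$. To check it, test both sides against $\partial^\beta w$ with $|\beta|<\bar s-r$. Integration by parts gives $\int\D_0(\Pi f)\,\partial^\beta w=\int\Pi f\,\D_0^\top\partial^\beta w$. Here $\D_0^\top\partial^\beta w$ is a combination of $\partial^{\alpha}w$ with $|\alpha|=|\beta|+r<\bar s$, so the moment conditions defining $\Pi_{\bar s}$ replace $\Pi f$ by $f$. Integrating back by parts gives $\int\D_0f\,\partial^\beta w$, which are exactly the moments defining $\Pi_{\bar s-r}\D_0f$. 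This uses $w\in C_c^\infty$, so there are no boundary terms.

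In the case $\bar s=r$ (possible when $s=r$), $\Pol_0=\{0\}$ and $\D_0\Pi_{\bar s}f=0$ directly. Otherwise boundedness of $\Pi_{\bar s-r}$ on $L^2$ gives $\|\D_0\Pi_{\bar s}f\|_{L^2}\leq C\|\D_0f\|_{L^2}$, and the triangle inequality concludes (b). For this to make sense we need $\D_0f\in L^2$, which holds because $r\leq s$.
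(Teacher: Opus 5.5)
Your proposal is correct and follows the paper's proof. It uses the same nondegeneracy argument for the moment map on $\Pol_j$, the same reduction of the first claim of (a) to a Bramble--Hilbert estimate via $f-\Pi_{\bar s}f=(I-\Pi_{\bar s})(f-q)$ and norm equivalence on $\Pol_{\bar s}$, the same dilation for the second claim, and the same commutation identity $\D_0\Pi_{\bar s}f=\Pi_{\bar s-r}\D_0f$ (with the case $\bar s=r$ treated separately) for (b).

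The only difference is how you get the Deny--Lions inequality on the cube. You prove it by the Rellich compactness argument, which is self-contained but gives a non-explicit constant; this is harmless here, since only one reference cube is needed and the second claim follows by scaling. The paper instead cites the constructive estimates of Dupont and Scott (Theorems~3.2 and~6.1).
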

\begin{proof}
If $p\in\Pol_j$ has vanishing moments, then $\int(\partial^\alpha p)w=(-1)^{|\alpha|}\int p\,\partial^\alpha w=0$ for $|\alpha|<j$, and taking $|\alpha|=\deg p,\deg p-1,\ldots$ shows that $p=0$; as $\dim\Pol_j=\#\{\alpha:|\alpha|<j\}$, the moment map from $\Pol_j$ to $\mathbb C^{\{\alpha:|\alpha|<j\}}$ is a bijection, which defines $\Pi_j$, bounded since the moments are continuous on $L^2$, with $\Pi_jp=p$ by uniqueness.
(a) On a cube $U$, which is star-shaped with respect to a ball, the Bramble--Hilbert estimates of \citet[Theorems~3.2 and~6.1]{dupont1980polynomial}, with exponent $2$, give $q\in\Pol_{\bar s}$ with $\|f-q\|_{H^s(U)}\leq C_U|f|_{s,U}$ (Theorem~3.2 for integer $s$, where $|f-q|_{s,U}=|f|_{s,U}$ because $\deg q<s$; Theorem~6.1 otherwise).
For $U=Q_0^\ast$, since $\Pi_{\bar s}q=q$ and all norms are equivalent on $\Pol_{\bar s}$, $\|f-\Pi_{\bar s}f\|_{H^s}\leq\|f-q\|_{H^s}+\|\Pi_{\bar s}(f-q)\|_{H^s}\leq\|f-q\|_{H^s}+C\|f-q\|_{L^2}\leq C|f|_{s,Q_0^\ast}$, which is the first claim.
For $U=(0,1)^d$, dilation gives the second claim on $P$, since $\|\cdot\|^2_{L^2}$ and $|\cdot|^2_s$ scale like $g^d$ and $g^{d-2s}$.
(b) If $\bar s\leq r$, then $\D_0\Pi_{\bar s}f=0$.
Otherwise $\D_0\Pi_{\bar s}f\in\Pol_{\bar s-r}$ and, for $|\alpha|<\bar s-r$, integrating by parts twice, with derivatives of $w$ of order less than $\bar s$, gives $\int(\D_0\Pi_{\bar s}f)\partial^\alpha w=(-1)^r\int(\Pi_{\bar s}f)\D_0\partial^\alpha w=(-1)^r\int f\,\D_0\partial^\alpha w=\int(\D_0f)\partial^\alpha w$, so that $\D_0\Pi_{\bar s}f=\Pi_{\bar s-r}\D_0f$ by uniqueness.
In both cases, $\|\D_0(f-\Pi_{\bar s}f)\|_{L^2}\leq(1+\|\Pi_{\bar s-r}\|)\|\D_0f\|_{L^2}$, with $\Pi_0=0$.
\end{proof}

\subsection{Local approximation for hyperbolic operators}
\begin{lem}[Local approximation, hyperbolic case]
\label[lem]{lem:local}
Let $r\in\{1,2\}$, $s\geq r$ and $\varepsilon>0$.
There is $C_\varepsilon$ such that, for every $\lambda\in(0,1]$ and $\mu>0$, there is a linear space $V$ of functions on $Q_0$ with $\dim V\leq C_\varepsilon(\lambda^{-(d-1)/(2s)}+\lambda^{-(d-r)/(2s)}\mu^{-1/2}\ell_{r,d}(\lambda))$ such that $\mathrm{dist}_{L^2(Q_0)}(q,V)^2\leq\varepsilon^2(\lambda|q|^2_{s,Q_0^\ast}+\mu\|\Dc q\|^2_{L^2(Q_0^\ast)})$ for every $q\in H^s(Q_0^\ast)$.
\end{lem}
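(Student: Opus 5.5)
The plan is to pass to the Fourier side after localizing, and to take $V$ to be the polynomials $\Pol_{\bar s}$, $\bar s=\lceil s\rceil$, together with the exponentials $e^{i\langle\zeta,\cdot\rangle}$ for $\zeta$ in a finite set of frequencies near the characteristic set of $\Dc$.

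\textbf{Reduction to a compactly supported function.} Given $q\in H^s(Q_0^\ast)$, subtract $\Pi_{\bar s}q$, which lies in $V$. By \cref{lem:poly}(a),(b), applied with $\D_0=\Dc$ (homogeneous of order $r\le s$), $g=q-\Pi_{\bar s}q$ satisfies
\[
\|g\|_{H^s(Q_0^\ast)}\le C|q|_{s,Q_0^\ast},\qquad \|\Dc g\|_{L^2(Q_0^\ast)}\le C\|\Dc q\|_{L^2(Q_0^\ast)}.
\]
Fix $\chi\in C_c^\infty(Q_0^\ast)$ equal to $1$ on $Q_0$ and set $F=\chi g$, extended by zero to $\R^d$. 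Periodize $F$ on a torus $\T^d_{L_0}$ containing $Q_0^\ast$. Then \cref{lem:fourier}(i) bounds $\sum_k(1+|\zeta_k|^2)^s|\hat F_k|^2$ by $C|q|^2_{s,Q_0^\ast}$, where $\zeta_k=\pi k/(2L_0)$. The Leibniz rule gives $\Dc F=\chi\Dc g+[\Dc,\chi]g$, where the commutator involves derivatives of $g$ of order at most $r-1$.

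\textbf{Choice of the frequency set.} Set $K=\varepsilon^{-1/s}\lambda^{-1/(2s)}$, up to a constant, and let $p$ be the symbol of $\Dc$. Take $V$ to contain $\Pol_{\bar s}$ and the exponentials $e^{i\langle\zeta_k,\cdot\rangle}$ for
\[
k\in\mathcal K=\bigl\{k:|\zeta_k|\le K,\ |p(\zeta_k)|\le \varepsilon^{-1}\mu^{-1/2}\bigr\}\cup\{k:|\zeta_k|\le R\},
\]
where $R$ is a constant. The remainder is $\sum_{k\notin\mathcal K}\hat F_ke^{i\langle\zeta_k,\cdot\rangle}$, and its squared $L^2(Q_0)$ norm splits into two parts:
\begin{itemize}
\item for $|\zeta_k|>K$, the Sobolev weight gives at most $K^{-2s}|q|_s^2\lesssim\varepsilon^2\lambda|q|^2_s$;
\item for $|\zeta_k|\le K$ with $|p(\zeta_k)|>\varepsilon^{-1}\mu^{-1/2}$, Parseval applied to $\Dc F$ gives at most $\varepsilon^2\mu\|\Dc F\|^2_{L^2}$.
\end{itemize}
The commutator part of $\Dc F$ is harmless at low frequency because it sits at order $r-1<s$. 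Concretely, I would write $[\Dc,\chi]g$ through its Fourier coefficients and bound its contribution on $\{|\zeta|>R\}$ by $R^{-2(s-r+1)}|q|^2_s$. The point to check is whether this can be absorbed with the weight $\lambda$. If it cannot, I would refine the second region using $\min(\mu|p|^2,\lambda|\zeta|^{2s})$ on dyadic shells.

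\textbf{Counting.} The cardinality of $\mathcal K$ is bounded by a lattice-point count, which I would obtain shell by shell on dyadic shells $|\zeta|\sim 2^j$.
\begin{itemize}
\item For $r=1$ (where $p=i\zeta_1$), the slab $|\zeta_1|\le \varepsilon^{-1}\mu^{-1/2}$ meets the ball of radius $K$ in $\lesssim K^{d-1}(1+\mu^{-1/2})$ lattice points.
\item For $r=2$, the shell $\bigl||\zeta_1|-|\zeta'|\bigr|\lesssim\mu^{-1/2}/|\zeta|$ around the light cone has width $\max(1,\mu^{-1/2}2^{-j})$ on the shell $|\zeta|\sim 2^j$. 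Summing over $j\le\log_2K$ gives $K^{d-1}$ from the unit-width part and $\sum_j2^{j(d-2)}\mu^{-1/2}$ from the rest. The latter is $K^{d-2}\mu^{-1/2}$ when $d>2$, and $\log K\,\mu^{-1/2}$ when $d=2$; the logarithm is the factor $\ell_{r,d}(\lambda)$.
\end{itemize}
This gives the claimed dimension $C_\varepsilon\bigl(\lambda^{-(d-1)/(2s)}+\lambda^{-(d-r)/(2s)}\mu^{-1/2}\ell_{r,d}(\lambda)\bigr)$.

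\textbf{Main obstacle.} The hardest step will be the localization commutator $[\Dc,\chi]g$, whose norm is controlled by $|q|_s$ rather than by $\|\Dc q\|$. I will first try to bound its high-frequency part by the Sobolev weight, as above. If that fails, I would replace the single cutoff by a covering of $Q_0$ by smaller cubes whose enlargements $Q^\ast$ stay in $Q_0^\ast$. On cubes of side $g$, the polynomial part of \cref{lem:poly}(a) contributes $g^{2s}|q|^2_s$, and the scales would be balanced against the count.
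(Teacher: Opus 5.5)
There is a genuine gap, and it is the step you flag yourself; none of your fallbacks closes it. For the discarded frequencies with $|\zeta_k|\le K$ and $|p(\zeta_k)|>\varepsilon^{-1}\mu^{-1/2}$, your bound is $\varepsilon^2\mu\|\Dc F\|^2_{L^2}$. But $\Dc F=\chi\Dc g+[\Dc,\chi]g$, and the commutator is controlled only by $\|g\|_{H^{r-1}}\le C|q|_{s,Q_0^\ast}$, that is, by $\lambda^{-1/2}$ rather than $\mu^{-1/2}$.

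Restricting the commutator to $|\zeta|>R$ still leaves a contribution $\varepsilon^2\mu R^{-2(s-r+1)}\lambda^{-1}$, so you need $R^{2(s-r+1)}\ge C\mu/\lambda$. This ratio is unbounded in the regimes where the lemma is used: $r=1$ with $\mu$ of order one, or the rescaled cubes of \cref{prop:canonical}, where $\mu_Q/\lambda_Q=(h/2)^{2s-2r}\mu/\lambda$. The $R^d$ retained low frequencies then exceed the dimension budget; for $r=1$ and $\mu=1$ they cost $\lambda^{-d/(2s)}$.

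The loss is real, not an artefact of the estimate. Take an exact solution $q=A\cos\langle\zeta_0,\cdot\rangle$ with $p(\zeta_0)=0$, $|\zeta_0|=\omega$, and $A$ of order $\lambda^{-1/2}\omega^{-s}$. The Fourier transform of $\chi g$ is essentially $\frac A2(\hat\chi(\cdot-\zeta_0)+\hat\chi(\cdot+\zeta_0))$: the cutoff smears it over $O(1)$-neighbourhoods of $\pm\zeta_0$, on which $|p|$ reaches order $\omega$ when $r=2$. Your $\mathcal K$, however, keeps only an $O(\varepsilon^{-1}\mu^{-1/2}/\omega)$-neighbourhood of the cone. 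Your count uses width $\max(1,\cdot)$, but $\mathcal K$ itself does not contain a unit-width neighbourhood. For $\varepsilon^{-1}\mu^{-1/2}\ll\omega\ll K$, the Parseval quantity you need to be $O(\varepsilon^2)$ is then in general of order $A^2$, i.e.\ of order $\varepsilon^2(K/\omega)^{2s}$.

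Covering by smaller cubes does not help. After rescaling, each cube poses the same problem with parameters $((h/2)^{-2s}\lambda,(h/2)^{-2r}\mu)$. Polynomials alone need sides of order $\lambda^{1/(2s)}$, hence $\lambda^{-d/(2s)}$ cubes. Reweighting dyadic shells by $\min(\mu|p|^2,\lambda|\zeta|^{2s})$ does not touch the commutator either.

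The missing idea is a localization that produces no $O(\lambda^{-1/2})$ residual. The paper never cuts off in $t=y_1$. It splits $(-1,1)$ into $32$ slabs $I_j$, approximates in $t$ with the cosine basis $b^{I_j}_p$ (which needs no periodicity), and uses only a spatial cutoff $\chi(y')$.

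\emph{Case $r=1$.} The spatial cutoff commutes with $\partial_1$, so $\partial_tv=\chi\Dc u$ and there is no commutator at all.

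\emph{Case $r=2$.} The paper first subtracts the space-periodic exact wave solution $h$ whose Cauchy data at a well-chosen $t_j\in I_j$ match those of $U$. The energy inequality on light cones, with zero initial energy, then bounds $w=U-h$, $\partial_tw$ and $\nabla'w$ by $C\mu^{-1/2}$, not merely $\Dc w$. Hence the commutator $-2\nabla'\chi\cdot\nabla'w-w\Delta'\chi$ has the right size. The modes of $h$ with $\omega_k\le K$ are exactly $e^{\pm i\omega_kt}\psi_k$, whose time frequencies lie on the cone rather than on a $d$-dimensional lattice. The modes of the remainder satisfy $a_k''+\omega_k^2a_k=g_k$ with $\sum_k\|g_k\|^2\le C\mu^{-1}$. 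So $J_k\approx\mu^{-1/2}/(\varepsilon_1\omega_k)$ modulated cosines per mode suffice, and this is where $\sum_{0<\omega_k\le K}\omega_k^{-1}$, hence $\ell_{r,d}$, comes from.

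Your lattice count reproduces this final arithmetic. Without an argument of this kind, the approximation estimate is not established. A genuinely microlocal substitute could also work: iterate commutators with nested cutoffs to gain powers of the distance to $\{p=0\}$, and keep cone neighbourhoods of width at least one that grow suitably at lower frequencies. Your proposal contains neither.
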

\begin{proof}
Write $y=(t,y')\in\R\times\R^{d-1}$, $\nabla'$ and $\Delta'$ for the spatial gradient and Laplacian, and $\psi_k(y')=e^{i\pi\langle k,y'\rangle/2}$ and $\omega_k=\pi|k|/2$ for $k\in\Z^{d-1}$.
Let $\bar s=\lceil s\rceil$, and split $(-1,1)$ into $32$ intervals $I_j$ of length $1/16$.
The space $V$ will contain $\Pol_{\bar s}$, which contains every $q$ with $|q|_{s,Q_0^\ast}=0$; by homogeneity, we may assume $\lambda|q|^2_{s,Q_0^\ast}+\mu\|\Dc q\|^2_{L^2(Q_0^\ast)}=1$.

\emph{Decomposition.}
Let $u=q-\Pi_{\bar s}q$, and let $U=Eu$ be its extension (\cref{app:tools}); by \cref{lem:poly}, $\|U\|_{(s)}\leq C\lambda^{-1/2}$ and $\|\Dc u\|_{L^2(Q_0^\ast)}\leq C\mu^{-1/2}$.
Since $(1+|\zeta'|^2)^s+\zeta_1^2(1+|\zeta'|^2)^{s-1}\leq2(1+|\zeta|^2)^s$ for $\zeta=(\zeta_1,\zeta')$, Plancherel's theorem in $t$ gives $\int_\R\phi(t)\,dt\leq C\lambda^{-1}$, where $\phi(t)=\|U(t,\cdot)\|^2_{(s)}+\|\partial_tU(t,\cdot)\|^2_{(s-1)}$ with norms on $\R^{d-1}$.
For $r=1$, let $h=0$.
For $r=2$, $U\in H^1(\R;H^1(\R^{d-1}))$ and $\partial_tU\in H^1(\R;L^2(\R^{d-1}))$, so we may take $t\mapsto(U(t,\cdot),\partial_tU(t,\cdot))$ continuous into $H^1\times L^2$; choose $t_j\in I_j$ with $\phi(t_j)\leq32\int\phi\leq C\lambda^{-1}$, let $\chi_0\in C_c^\infty((-2,2)^{d-1})$ equal $1$ on $(-6/5,6/5)^{d-1}$, and let $u_0,u_1$ be the periodizations of $\chi_0U(t_j,\cdot)$ and $\chi_0\partial_tU(t_j,\cdot)$, so that $\|u_0\|_{H^s(\T^{d-1}_1)}+\|u_1\|_{H^{s-1}(\T^{d-1}_1)}\leq C\lambda^{-1/2}$ by \cref{lem:fourier}(i).
Let $h$ solve $(\partial_t^2-\Delta')h=0$ on $\R\times\T^{d-1}_1$ with $h(t_j)=u_0$ and $\partial_th(t_j)=u_1$: its coefficients are $\hat h_k(t)=\hat u_{0,k}\cos(\omega_k(t-t_j))+\hat u_{1,k}\omega_k^{-1}\sin(\omega_k(t-t_j))$ for $k\neq0$ and $\hat h_0(t)=\hat u_{0,0}+(t-t_j)\hat u_{1,0}$.
Since $\omega_k\geq\pi/2$ for $k\neq0$, $\|h(t)\|_{H^s(\T^{d-1}_1)}\leq C\lambda^{-1/2}$ for $|t-t_j|\leq2$, and $h\in H^2$ on bounded sets because $s\geq2$.
Fix $\chi\in C_c^\infty((-17/16,17/16)^{d-1})$ equal to $1$ on $[-1,1]^{d-1}$, let $w=U-h$, and let $v$ be the periodization of $\chi w$ on $I_j\times\T^{d-1}_1$, so that $q=\Pi_{\bar s}q+h+v$ on $I_j\times(-1,1)^{d-1}$.
By \cref{lem:fourier}(i)--(ii), $\|\chi U(t)\|_{H^s(\T^{d-1}_1)}\leq C\|U(t)\|_{(s)}$ and $\|\chi h(t)\|_{H^s(\T^{d-1}_1)}\leq C\|h(t)\|_{H^s(\T^{d-1}_1)}$, so that $\int_{I_j}(\|v(t)\|^2_{H^s(\T^{d-1}_1)}+\|h(t)\|^2_{H^s(\T^{d-1}_1)})dt\leq C\lambda^{-1}$.

\emph{Residual.}
We show that $\|\partial_tv\|_{L^2(I_j\times\T^{d-1}_1)}+\|\Dc v\|_{L^2(I_j\times\T^{d-1}_1)}\leq C\mu^{-1/2}$.
For $r=1$, $\partial_tv=\Dc v=\chi\Dc u$, since $I_j\times\mathrm{supp}\,\chi\subset Q_0^\ast$.
For $r=2$, we use the energy inequality on cones \citep[Section~2.4.3, proof of Theorem~6]{evans1998partial}: if $\rho_0>0$, $K=\{(t,y'):|t-t_0|<\rho_0/2,\ |y'-y_0|<\rho_0-|t-t_0|\}$ and $w\in H^2(K)$, then $e(t)=\frac12\int_{K_t}(|\partial_tw|^2+|\nabla'w|^2)dy'$, where $K_t$ is the section of $K$ at time $t$, satisfies $e(t)\leq e(t_0)+|\int_{t_0}^t\|\partial_tw\|_{L^2(K_{t'})}\|(\partial_t^2-\Delta')w\|_{L^2(K_{t'})}dt'|$.
For smooth real $w$, the computation of $e'$ in that proof gives $e'(t)\leq\int_{K_t}\partial_tw\,(\partial_t^2w-\Delta'w)\,dy'$ for $t>t_0$, because the boundary term is nonpositive, and the Cauchy--Schwarz inequality gives the claim; it extends to $w\in H^2(K)$ by density of $C^\infty(\overline K)$ ($K$ is convex), since the traces of $\partial_tw$ and $\nabla'w$ on the sections $K_t$ are bounded in $L^2$ uniformly in $t$; to complex $w$ by adding the inequalities for the real and imaginary parts (Cauchy--Schwarz in $\R^2$); and to $t<t_0$ by time reversal.
Apply it with $t_0=t_j$, $\rho_0=1/8$ and $y_0\in[-17/16,17/16]^{d-1}$: then $K\subset Q_0^\ast$, $(\partial_t^2-\Delta')w=\Dc u$ on $K$, and $e(t_j)=0$ because $h(t_j)=U(t_j)$ and $\partial_th(t_j)=\partial_tU(t_j)$ on $K_{t_j}$.
Hence $\sup_te\leq(2\sup_te)^{1/2}|I_j|^{1/2}\|\Dc u\|_{L^2(Q_0^\ast)}$, so $\sup_te\leq C\mu^{-1}$.
Every section contains $B(y_0,1/16)$, so finitely many centers give $\|\partial_tw\|+\|\nabla'w\|\leq C\mu^{-1/2}$ in $L^2(I_j\times(-17/16,17/16)^{d-1})$, and $w(t)=\int_{t_j}^t\partial_tw$ gives the same bound for $w$.
Since $\partial_tv=\chi\partial_tw$ and $\Dc v=\chi\Dc u-2\nabla'\chi\cdot\nabla'w-w\Delta'\chi$, the claim follows.

\emph{Approximation.}
Write $v=\sum_ka_k(t)\psi_k$ and $h+v=\sum_kc_k(t)\psi_k$ on $I_j\times\T^{d-1}_1$, and let $g_k=a_k'$ if $r=1$ and $g_k=a_k''+\omega_k^2a_k$ if $r=2$, which are the coefficients of $\Dc v$.
Parseval's identity gives $\sum_k(\|a_k'\|^2_{L^2(I_j)}+\|g_k\|^2_{L^2(I_j)})\leq C\mu^{-1}$ and $\sum_k(1+\omega_k^2)^s\|c_k\|^2_{L^2(I_j)}\leq C\lambda^{-1}$.
For an interval $I$, let $T^I_J$ be the $L^2(I)$-projection onto the span of $b_p^I(t)=\cos(\pi p(t-\inf I)/|I|)$, $0\leq p<J$; integrating by parts against the sine basis and Bessel's inequality give $\|a-T_J^Ia\|_{L^2(I)}\leq|I|\|a'\|_{L^2(I)}/(\pi J)$ for $a\in H^1(I)$.
Let $\varepsilon_1\in(0,1]$, $K_0\geq1$, $K=K_0\lambda^{-1/(2s)}$, $J=\lceil\mu^{-1/2}/\varepsilon_1\rceil$ and $J_k=\lceil\mu^{-1/2}/(\varepsilon_1\omega_k)\rceil$.
Let $W_j$ be spanned by the functions $b_p^{I_j}(t)\psi_k(y')$ with $\omega_k\leq K$ and $p<J$ if $r=1$, and, if $r=2$, by $t$, by $b_p^{I_j}(t)$ with $p<J$, and by $e^{\pm i\omega_kt}b_p^{I_j}(t)\psi_k(y')$ with $0<\omega_k\leq K$ and $p<J_k$.
It contains the modes of $h$ with $\omega_k\leq K$, and the modes with $\omega_k>K$ have squared norm at most $C\sum_{\omega_k>K}\|c_k\|^2\leq CK^{-2s}\lambda^{-1}=CK_0^{-2s}$.
If $r=1$ or $k=0$, then $\|a_k-T^{I_j}_Ja_k\|\leq\varepsilon_1\mu^{1/2}\|a_k'\|$.
If $r=2$ and $k\neq0$, then $a_k=e^{i\omega_kt}a_k^++e^{-i\omega_kt}a_k^-$ with $a_k^\pm=\frac12e^{\mp i\omega_kt}(a_k\pm a_k'/(i\omega_k))$ and $(a_k^\pm)'=\pm e^{\mp i\omega_kt}g_k/(2i\omega_k)$, and projecting $a_k^\pm$ with $T^{I_j}_{J_k}$ gives an error at most $|I_j|\|g_k\|/(\pi J_k\omega_k)\leq\varepsilon_1\mu^{1/2}\|g_k\|$.
Summing over $k$, $\mathrm{dist}_{L^2(I_j\times(-1,1)^{d-1})}(q-\Pi_{\bar s}q,W_j)^2\leq C(\varepsilon_1^2+K_0^{-2s})$.

\emph{Conclusion.}
Let $V=\Pol_{\bar s}+\sum_j\{\mathbf 1_{I_j}v:v\in W_j\}$, which does not depend on $q$ (the $W_j$ involve the $I_j$, not the $t_j$).
Summing over the $32$ slabs gives $\mathrm{dist}_{L^2(Q_0)}(q,V)\leq\varepsilon$ for $K_0$ large and $\varepsilon_1$ small.
As at most $CK^{d-1}$ frequencies satisfy $\omega_k\leq K$, $\dim V\leq C+CK^{d-1}(1+\mu^{-1/2})$ if $r=1$, and $\dim V\leq C+CK^{d-1}+C\mu^{-1/2}(1+\sum_{0<\omega_k\leq K}\omega_k^{-1})$ if $r=2$, where the sum is at most $C\log(2+K)$ if $d=2$ and $CK^{d-2}$ if $d\geq3$.
Since $\lambda\leq1$, both bounds are at most $C_\varepsilon(\lambda^{-(d-1)/(2s)}+\lambda^{-(d-r)/(2s)}\mu^{-1/2}\ell_{r,d}(\lambda))$, which proves the lemma.
\end{proof}

\subsection{Local approximation for elliptic operators}
For $\eta>0$, let $\D_\eta=\sum_{|\alpha|\leq r}\eta^{r-|\alpha|}a_\alpha\partial^\alpha$, so that $q(z)=f(x_0+\eta z)$ satisfies $(\D f)(x_0+\eta z)=\eta^{-r}(\D_\eta q)(z)$.
If $\D$ is elliptic, homogeneity and compactness of the unit sphere give $c_{\mathrm e}>0$ with $|P_r(i\zeta)|\geq c_{\mathrm e}|\zeta|^r$ for $\zeta\in\R^d$.

\begin{lem}[Interior estimates]
\label[lem]{lem:interior}
Let $\D$ be elliptic of order $r$ with constant real coefficients, $\eta_{\max}>0$ and $j_0\geq0$ an integer.
Uniformly in $\eta\in(0,\eta_{\max}]$, if $g\in L^2(Q_0^\ast)$ and $\D_\eta g\in H^{j_0}(Q_0^\ast)$, then $g\in H^{j_0+r}(Q_0)$ and $\|g\|_{H^{j_0+r}(Q_0)}\leq C_{j_0,\eta_{\max}}(\|\D_\eta g\|_{H^{j_0}(Q_0^\ast)}+\|g\|_{L^2(Q_0^\ast)})$.
\end{lem}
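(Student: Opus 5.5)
We use the classical Fourier-multiplier proof of interior regularity with cutoffs, and keep the constants uniform in $\eta$. The symbol of $\D_\eta$ is $P^\eta(i\zeta)=\sum_{|\alpha|\leq r}\eta^{r-|\alpha|}a_\alpha(i\zeta)^\alpha$. Its principal part is $P_r(i\zeta)$, independent of $\eta$, and the lower-order terms have coefficients bounded by $C\max(1,\eta_{\max})^r$. Ellipticity gives $|P_r(i\zeta)|\geq c_{\mathrm e}|\zeta|^r$, so there is $R_0$, depending only on $\eta_{\max}$ and $\D$, such that $|P^\eta(i\zeta)|\geq\frac{c_{\mathrm e}}2|\zeta|^r$ for $|\zeta|\geq R_0$ and all $\eta\in(0,\eta_{\max}]$. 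This gives the a priori estimate on $\R^d$: for $F\in H^{j_0+r}(\R^d)$ with compact support,
\[
\|F\|_{(j_0+r)}\leq C\big(\|\D_\eta F\|_{(j_0)}+\|F\|_{(0)}\big).
\]
To prove it, split the Fourier integral at $|\zeta|=R_0$. On $\{|\zeta|\geq R_0\}$, use $(1+|\zeta|^2)^{r}\leq C|P^\eta(i\zeta)|^2$. On $\{|\zeta|<R_0\}$, use $(1+|\zeta|^2)^{j_0+r}\leq C$.

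Next we localize with a chain of nested cubes $Q_0=U_{N}\subset U_{N-1}\subset\cdots\subset U_0=Q_0^\ast$, with $N=j_0+r$, each step shrinking by a fixed amount. Take cutoffs $\chi_k\in C_c^\infty(U_{k})$ equal to $1$ on $U_{k+1}$. We first treat $g$ already known to be smooth, or apply the argument to mollifications $g_\epsilon=g\ast\varrho_\epsilon$ on slightly smaller cubes; mollification commutes with the constant-coefficient operator $\D_\eta$. By induction on $k$, we prove
\[
\|g\|_{H^{k}(U_{k})}\leq C(\|\D_\eta g\|_{H^{j_0}(Q_0^\ast)}+\|g\|_{L^2(Q_0^\ast)})
\]
for $k\leq j_0+r$, gaining one derivative at each step. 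The commutator $[\D_\eta,\chi_k]g$ is an operator of order $r-1$ whose coefficients involve derivatives of $\chi_k$ and powers $\eta^{j}\leq\max(1,\eta_{\max})^j$. Apply the a priori estimate at order $k+1$, that is, with $j_0$ replaced by $k+1-r$ when this is nonnegative and with a negative Sobolev index otherwise, to $\chi_kg$. This gives
\[
\|\chi_kg\|_{(k+1)}\leq C\big(\|\chi_k\D_\eta g\|_{(k+1-r)}+\|[\D_\eta,\chi_k]g\|_{(k+1-r)}+\|\chi_k g\|_{(0)}\big).
\]
The commutator term is bounded by $C\|g\|_{H^{k}(U_k)}$, which the induction hypothesis controls. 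Negative-order norms are bounded by $L^2$ norms, and the a priori estimate holds with negative indices as well, since only $|P^\eta(i\zeta)|\gtrsim|\zeta|^r$ at high frequency is used. After $j_0+r$ steps we reach $Q_0$. Letting $\epsilon\to0$, the bounds are uniform, so $g_\epsilon$ is bounded in $H^{j_0+r}(Q_0)$ and converges to $g$ in $L^2$; weak compactness then gives $g\in H^{j_0+r}(Q_0)$ with the same bound.

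The main obstacle is uniformity in $\eta$. It is handled by the fact that the principal symbol does not depend on $\eta$ and the lower-order coefficients stay bounded for $\eta\leq\eta_{\max}$; no lower bound on $\eta$ is needed, since $\eta\to0$ only removes lower-order terms. A secondary technical point is that $g$ is only in $L^2$. This is why we run the argument on mollifications, or equivalently use difference quotients, rather than applying Plancherel directly to $\chi g$ at high order.
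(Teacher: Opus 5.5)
Your proposal is correct and follows essentially the same route as the paper: a Fourier-multiplier estimate on $\R^d$, nested cubes shrinking from $Q_0^\ast$ to $Q_0$ in $j_0+r$ steps, one derivative gained per step with negative Sobolev indices in the early steps, and uniformity in $\eta$ because the principal symbol does not depend on $\eta$. The differences are minor. The paper applies the estimate to the principal part $\D_0$ and treats $\D_0-\D_\eta$ as an order-$(r-1)$ perturbation, instead of bounding the full symbol from below at high frequency. It also needs no mollification, because the pointwise Fourier inequality holds for any compactly supported $L^2$ function whenever its right-hand side is finite.
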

\begin{proof}
Let $\D_0=\sum_{|\alpha|=r}a_\alpha\partial^\alpha$.
Since $(1+|\zeta|^2)^{r/2}\leq C(1+|P_r(i\zeta)|)$, every compactly supported $v\in L^2(\R^d)$ satisfies, for every $t\in\R$, $\|v\|_{(t+r)}\leq C(\|\D_0v\|_{(t)}+\|v\|_{(t)})$ whenever the right-hand side is finite.
Let $J=j_0+r$ and $U_i=(\frac65-\frac i{5J})Q_0$, so that $U_0=Q_0^\ast$ and $U_J=Q_0$.
Assume $g\in H^i(U_i)$ for some $i<J$, take $\chi\in C_c^\infty(U_i)$ equal to $1$ on $U_{i+1}$, and let $v=\chi g$, extended by zero, so that $\|v\|_{(i)}\leq C\|g\|_{H^i(U_i)}$.
By the Leibniz rule, $\D_0v=\chi\D_\eta g+(\D_0-\D_\eta)v+\sum_{|\beta|\leq r-1}\partial^\beta(\varphi_\beta g)$, where the $\varphi_\beta\in C_c^\infty(U_i)$ have derivatives bounded uniformly in $\eta\leq\eta_{\max}$.
For $t=i-r+1\leq j_0$, we have $\|\chi\D_\eta g\|_{(t)}\leq C\|\D_\eta g\|_{H^{j_0}(Q_0^\ast)}$, $\|\partial^\beta(\varphi_\beta g)\|_{(t)}\leq\|\varphi_\beta g\|_{(i)}\leq C\|g\|_{H^i(U_i)}$ because $t+|\beta|\leq i$, and $\|(\D_0-\D_\eta)v\|_{(t)}\leq C\|v\|_{(i)}$ because $\D_0-\D_\eta$ has order at most $r-1$ and bounded coefficients.
Hence $\|g\|_{H^{i+1}(U_{i+1})}\leq C\|v\|_{(i+1)}\leq C(\|\D_\eta g\|_{H^{j_0}(Q_0^\ast)}+\|g\|_{H^i(U_i)})$, and iterating from $i=0$ to $J-1$ proves the lemma.
\end{proof}

\begin{cor}[Periodic extension]
\label[cor]{cor:extension}
Let $L>0$ and $U$ be a bounded Lipschitz domain with $\overline U\subset(-2L,2L)^q$, for instance $U=\Omega$ with the $L$ of (A1).
For every real $t\geq0$, every $f\in H^t(U)$ coincides on $U$ with a $4L$-periodic function $g$ such that $\|g\|_{H^t(\T^q_L)}\leq C_{U,t}\|f\|_{H^t(U)}$.
\end{cor}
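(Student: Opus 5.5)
The plan is to compose the Sobolev extension operator $E$ recalled in \cref{app:tools} with a cutoff and periodization, and to control the result with \cref{lem:fourier}(i).

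First, since $\overline U$ is compact and contained in the open cube $(-2L,2L)^q$, there is $\chi\in C_c^\infty((-2L,2L)^q)$ with $\chi=1$ on a neighbourhood of $\overline U$. This only needs a smooth Urysohn function built by mollifying the indicator of a slightly enlarged compact set, using the positive distance from $\overline U$ to the boundary of the cube. For $U=\Omega$, the hypothesis holds because $\Omega\subseteq[-L,L]^q$, so $\overline\Omega\subset(-2L,2L)^q$.

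Given $f\in H^t(U)$, set $F=Ef\in H^t(\R^q)$. By the cited result of Grisvard for bounded Lipschitz domains, $F=f$ on $U$ and $\|F\|_{H^t(\R^q)}\leq C_{U,t}\|f\|_{H^t(U)}$. Let $g$ be the $4L$-periodization of $\chi F$, namely $g(x)=\sum_{k\in\Z^q}(\chi F)(x+4Lk)$. Because $\mathrm{supp}\,\chi\subset(-2L,2L)^q$, at most one term of this sum is nonzero at any point. On $(-2L,2L)^q$, and in particular on $U$, we therefore have $g=\chi F=F=f$. \Cref{lem:fourier}(i) then gives $\|g\|_{H^t(\T^q_L)}\leq C_\chi\|F\|_{H^t(\R^q)}\leq C_\chi C_{U,t}\|f\|_{H^t(U)}$. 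Since $\chi$ depends only on $U$ and $L$, the constant is of the stated form.

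The only step that needs care is the definition of the norm. \Cref{lem:fourier}(i) is stated for the Fourier-weighted norm on $\T^q_L$, and that is exactly the norm $\|\cdot\|_{H^t(\T^q_L)}$ defined in \cref{app:tools}, so no further equivalence is needed. Likewise, the extension bound is stated with the $H^t(U)$ norm of \cref{sec:setting}, which is the norm in the corollary. The argument therefore has no real obstacle: it is a direct composition of two results already available.
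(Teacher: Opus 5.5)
Your proposal is correct and follows the paper's own proof: take a cutoff $\chi\in C_c^\infty((-2L,2L)^q)$ equal to $1$ near $\overline U$, periodize $\chi Ef$, and combine \cref{lem:fourier}(i) with the extension bound. Your extra remarks, that the translates of $\operatorname{supp}\chi$ are disjoint and that the norms match, are accurate details that the paper leaves implicit.
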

\begin{proof}
Let $\chi\in C_c^\infty((-2L,2L)^q)$ equal $1$ near $\overline U$.
The periodization $g$ of $\chi Ef$ agrees with $f$ on $U$, and \cref{lem:fourier}(i) and the extension bound give $\|g\|_{H^t(\T^q_L)}\leq C\|Ef\|_{H^t(\R^q)}\leq C_{U,t}\|f\|_{H^t(U)}$.
\end{proof}

\begin{lem}[Local approximation, elliptic case]
\label[lem]{lem:localell}
Let $\D$ be elliptic of order $r$ with constant real coefficients, $s\geq r$, $\eta_{\max}>0$, $j_0\geq1$ an integer and $\varepsilon>0$.
There is $C$, depending also on $\varepsilon$, $j_0$ and $\eta_{\max}$, such that, for every $\eta\in(0,\eta_{\max}]$, $\lambda\in(0,1]$ and $\mu>0$, there is a linear space $V$ of functions on $Q_0$ with $\dim V\leq C(1+\lambda^{-d/(2(j_0+r))}+\mu^{-d/(2r)})$ such that $\mathrm{dist}_{L^2(Q_0)}(q,V)^2\leq\varepsilon^2(\lambda|q|^2_{s,Q_0^\ast}+\mu\|\D_\eta q\|^2_{L^2(Q_0^\ast)})$ for every $q\in H^s(Q_0^\ast)$.
\end{lem}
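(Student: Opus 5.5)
I would follow the template of \cref{lem:local}. First, a reduction: $V$ will contain $\Pol_{\bar s}$ with $\bar s=\lceil s\rceil$, and by homogeneity I normalize to $\lambda|q|^2_{s,Q_0^\ast}+\mu\|\D_\eta q\|^2_{L^2(Q_0^\ast)}=1$.

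Next, split $q-\Pi_{\bar s}q$ into a low-frequency part and a locally elliptic remainder. Set $u=q-\Pi_{\bar s}q$. By \cref{lem:poly}(a), $\|u\|_{H^s(Q_0^\ast)}\le C\lambda^{-1/2}$. Because $\D_\eta$ is not homogeneous, \cref{lem:poly}(b) does not apply directly. I would instead write $\D_\eta\Pi_{\bar s}q$ as a polynomial of degree less than $\bar s$ and absorb it into the space, via the decomposition below.

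Take a frequency cutoff $K=K_0\mu^{-1/(2r)}$. Let $u_K$ be the part of the periodized extension of $u$ (\cref{cor:extension}) with frequencies $|k|\le K$. This part lies in a space of dimension $CK^d=C\mu^{-d/(2r)}$, which gives one term of the bound.

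The remainder $u-u_K$ has no low frequencies, and on it the symbol dominates: $|P_\eta(i\zeta)|\ge c_{\mathrm e}|\zeta|^r-C\eta_{\max}|\zeta|^{r-1}\ge c|\zeta|^r$ for $|\zeta|\ge C$. For the high-frequency part this gives a Poincaré-type estimate
\[
\|u-u_K\|_{L^2}\le CK^{-r}\|\D_\eta(u-u_K)\|_{L^2}.
\]
However, $\D_\eta(u-u_K)$ is only controlled through $\D_\eta q$ on $Q_0^\ast$ and the localization error, so I would not rely on this estimate alone.

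The cleaner route, and the one I would pursue, is to use \cref{lem:interior} to trade the differential constraint for smoothness.
\begin{itemize}
\item[(1)] Let $g_1$ solve $\D_\eta g_1=\chi\D_\eta q$ on the torus, using the Fourier multiplier $1/P_\eta$ at frequencies where $|P_\eta|\ge c|\zeta|^r$. Handle the finitely many remaining low frequencies by putting them into $V$; there are $C$ of them, uniformly in $\eta\le\eta_{\max}$.
\item[(2)] The difference $g=q-g_1$ satisfies $\D_\eta g=0$ on a slightly smaller cube. \Cref{lem:interior} with $j_0$ then gives $\|g\|_{H^{j_0+r}(Q_0)}\le C\|g\|_{L^2}$. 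Combined with $\|q\|_{H^s}$ control, this lets us approximate $g-\Pi g$ by trigonometric modes up to frequency $\lambda^{-1/(2(j_0+r))}$, yielding the term $\lambda^{-d/(2(j_0+r))}$.
\item[(3)] The component $g_1$ has $\|g_1\|_{H^r}\le C\mu^{-1/2}$. Its modes above $K_0\mu^{-1/(2r)}$ have $L^2$ norm at most $CK_0^{-r}$, and the modes below that scale span dimension $C\mu^{-d/(2r)}$.
\end{itemize}
Summing the three pieces gives the dimension bound, once $K_0$ is large enough and the high-frequency truncation errors are at most $\varepsilon$.

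The main obstacle is step (2): bounding $\|g\|_{L^2}$ in terms of the normalization, uniformly in $\eta$. I would bound it by $\|u\|_{L^2}+\|g_1\|_{L^2}\le C(\lambda^{-1/2}+\mu^{-1/2})$.

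This bound is too weak. The interior regularity must gain relative to $\lambda$, and bounding $g$ only by $\lambda^{-1/2}$ gives $H^{j_0+r}$ norm of order $\lambda^{-1/2}$, so the frequency cutoff should be taken at scale $\lambda^{-1/(2(j_0+r))}\varepsilon^{-1/(j_0+r)}$. The $\mu^{-1/2}$ contribution to $\|g\|_{H^{j_0+r}}$ is then the issue. I would handle it by a second split with cutoff $\mu^{-1/(2r)}$, placing those low modes into the $\mu^{-d/(2r)}$ part.

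Handling this splitting consistently, so that every error term carries either a factor $\lambda^{1/2}$ or a factor $\mu^{1/2}$ times a small constant, is where the care lies.
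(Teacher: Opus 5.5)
Your plan is essentially the paper's proof: subtract $\Pi_{\bar s}q$, build $g_1$ by inverting $P_\eta$ on the frequencies $|k|>k_0$ of the periodized forcing $\D_\eta q$ (so $\|g_1\|_{H^r(\T^d_1)}\leq C\mu^{-1/2}$), apply \cref{lem:interior,cor:extension} to $g=u-g_1$, and truncate both pieces at one common frequency $J$. Two details need fixing, without changing the route. First, $\D_\eta g$ is not zero on $Q_0^\ast$: it equals the at most $C$ discarded low modes of the forcing minus the polynomial $\D_\eta\Pi_{\bar s}q$; keep this as a forcing of $H^{j_0}(Q_0^\ast)$ norm at most $C(\lambda^{-1/2}+\mu^{-1/2})$ in \cref{lem:interior} rather than putting it into $V$, since adding modes where $P_\eta$ may vanish does not cancel it. Second, the resulting bound $C(\lambda^{-1/2}+\mu^{-1/2})$ on the $H^{j_0+r}(\T^d_1)$ norm of the periodic extension of $g$ is not too weak: its $\mu$-part only requires $J$ of order $\mu^{-1/(2(j_0+r))}\leq1+\mu^{-1/(2r)}$, so no second split is needed.
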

\begin{proof}
Constants may depend on $\varepsilon$, $j_0$ and $\eta_{\max}$.
As in \cref{lem:local}, $V$ will contain $\Pol_{\bar s}$ with $\bar s=\lceil s\rceil$, so we may assume $\lambda|q|^2_{s,Q_0^\ast}+\mu\|\D_\eta q\|^2_{L^2(Q_0^\ast)}=1$.
Let $u=q-\Pi_{\bar s}q$, $g=\D_\eta q$ and $R=\D_\eta\Pi_{\bar s}q=g-\D_\eta u\in\Pol_{\bar s}$.
By \cref{lem:poly}(a), $\|u\|_{H^s(Q_0^\ast)}\leq C\lambda^{-1/2}$, so $\|R\|_{L^2(Q_0^\ast)}\leq\|g\|_{L^2(Q_0^\ast)}+C\|u\|_{H^r(Q_0^\ast)}\leq C(\lambda^{-1/2}+\mu^{-1/2})$.
Let $\tilde g\in L^2(\T^d_1)$ be the periodization of $g$ extended by zero.
The symbol $P_\eta(z)=\sum_{|\alpha|\leq r}\eta^{r-|\alpha|}a_\alpha z^\alpha$ satisfies $|P_\eta(i\zeta)-P_r(i\zeta)|\leq C(1+|\zeta|^{r-1})$, so there is $k_0$, independent of $\eta$, with $|P_\eta(i\pi k/2)|\geq c_{\mathrm e}|\pi k/2|^r/2$ for $|k|>k_0$.
Define $Z$ on $\T^d_1$ by $\hat Z_k=\hat{\tilde g}_k/P_\eta(i\pi k/2)$ for $|k|>k_0$ and $\hat Z_k=0$ otherwise.
Then $\|Z\|_{H^r(\T^d_1)}\leq C\|g\|_{L^2(Q_0^\ast)}\leq C\mu^{-1/2}$, and $\D_\eta Z=\tilde g-\tilde g_0$, where $\tilde g_0$ keeps the frequencies $|k|\leq k_0$ of $\tilde g$.
On $Q_0^\ast$, $h=u-Z$ satisfies $\D_\eta h=\tilde g_0-R$, a trigonometric polynomial with at most $C$ coefficients bounded by $C\|g\|_{L^2(Q_0^\ast)}$ plus an element of $\Pol_{\bar s}$, so $\|\D_\eta h\|_{H^{j_0}(Q_0^\ast)}+\|h\|_{L^2(Q_0^\ast)}\leq C(\lambda^{-1/2}+\mu^{-1/2})$.
\Cref{lem:interior,cor:extension} give $\bar h\in H^{j_0+r}(\T^d_1)$, equal to $h$ on $Q_0$, with $\|\bar h\|_{H^{j_0+r}(\T^d_1)}\leq C(\lambda^{-1/2}+\mu^{-1/2})$.
Let $T_J$ keep the frequencies $\|k\|_\infty\leq J$ on $\T^d_1$; the truncation bound of \cref{app:tools} gives $\|Z-T_JZ\|_{L^2(\T^d_1)}\leq CJ^{-r}\mu^{-1/2}$ and $\|\bar h-T_J\bar h\|_{L^2(\T^d_1)}\leq CJ^{-j_0-r}(\lambda^{-1/2}+\mu^{-1/2})$.
Both are at most $\varepsilon/2$ for some $J\leq C(1+\mu^{-1/(2r)}+(\lambda^{-1/2}+\mu^{-1/2})^{1/(j_0+r)})$.
Since $q=\Pi_{\bar s}q+Z+\bar h$ on $Q_0$, the space $V=\Pol_{\bar s}+\mathrm{span}\{e^{i\pi\langle k,\cdot\rangle/2}|_{Q_0}:\|k\|_\infty\leq J\}$, which does not depend on $q$, approximates $q$ within $\varepsilon$.
Its dimension is at most $C+(2J+1)^d\leq C(1+\lambda^{-d/(2(j_0+r))}+\mu^{-d/(2r)}+\mu^{-d/(2(j_0+r))})$, and $\mu^{-d/(2(j_0+r))}\leq1+\mu^{-d/(2r)}$, which proves the lemma.
\end{proof}

\subsection{Whitney assembly}
\begin{lem}[Whitney assembly]
\label[lem]{lem:boundary}
Let $U\subseteq\R^d$ be a bounded Lipschitz domain, $U'=\{y:\mathrm{dist}(y,U)<\sqrt d\}$, $s\geq1$, $\lambda\in(0,1]$, $\mu\geq0$, $\mathcal L$ a linear differential operator of order at most $s$ with constant coefficients, and $\varepsilon>0$.
There are $\bar c\geq2$ and $\varepsilon'>0$, depending only on $\varepsilon$, $d$, $s$ and $U$, with the following property.
Assume that, for every dyadic $h\geq\bar c\lambda^{1/(2s)}$ and every cube $Q$ of side $h$ with $Q^\ast\subseteq U$, there is a linear space $V_Q$ of functions on $Q$ with $\dim V_Q\leq N_h$ such that $\mathrm{dist}_{L^2(Q)}(F,V_Q)^2\leq\varepsilon'^2(\lambda|F|^2_{s,Q^\ast}+\mu\|\mathcal LF\|^2_{L^2(Q^\ast)})$ for every $F\in H^s(Q^\ast)$.
Then there is a linear space $W\subseteq L^2(U)$ with $\dim W\leq C_\varepsilon(\lambda^{-(d-1)/(2s)}+\sum_hh^{1-d}N_h)$, the sum being over dyadic $h\in[\bar c\lambda^{1/(2s)},\mathrm{diam}(U)]$, such that $\mathrm{dist}_{L^2(U)}(F,W)^2\leq\varepsilon^2(\lambda|F|^2_{s,U'}+\mu\|\mathcal LF\|^2_{L^2(U)})$ for every $F\in H^s(U')$.
\end{lem}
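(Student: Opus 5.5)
We will build $W$ from a Whitney-type decomposition of $U$ into dyadic cubes whose size is adapted to the distance to $\partial U$, truncated at the scale $h_{\min}$, the smallest dyadic number $\geq\bar c\lambda^{1/(2s)}$.

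\textbf{Step 1: the cover.} Let $\mathcal Q$ be the maximal dyadic cubes $Q$ of side $h\geq h_{\min}$ with $Q^\ast\subseteq U$ and, say, $\mathrm{dist}(Q,\partial U)\geq 2\sqrt d\,h$. They are disjoint. The cubes of side $h$ in $\mathcal Q$ all lie within distance $C h$ of $\partial U$. Because $U$ is bounded Lipschitz, the $Ch$-neighbourhood of $\partial U$ has volume at most $C h$, so there are at most $Ch^{1-d}$ such cubes.

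\textbf{Step 2: the boundary layer.} The points of $U$ not covered by $\mathcal Q$ lie in a layer $B$ of width $Ch_{\min}$ around $\partial U$. Cover $B$ by the grid cubes $P$ of side $h_{\min}$ that meet it; there are $Ch_{\min}^{1-d}\leq C\bar c^{1-d}\lambda^{-(d-1)/(2s)}$ of them. Each $P$ has diameter $\sqrt d\,h_{\min}\leq\sqrt d$, so $P\subseteq U'$, which is why $U'$ appears in the statement. On each $P$ use $\Pol_{\bar s}$, with $\bar s=\lceil s\rceil$. By \cref{lem:poly}(a),
\[
\mathrm{dist}_{L^2(P)}(F,\Pol_{\bar s})^2\leq C h_{\min}^{2s}|F|^2_{s,P}=C\bar c^{2s}\lambda|F|^2_{s,P}.
\]
This is where the choice of $\bar c$ interacts with the other constants; see below.

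\textbf{Step 3: assembly.} Set
\[
W=\sum_{Q\in\mathcal Q}\mathbf 1_QV_Q+\sum_P\mathbf 1_{P\cap U}\Pol_{\bar s},
\]
and approximate $F$ piecewise. Use $\mathbf 1_QV_Q$ on each $Q\in\mathcal Q$. On each $P$, approximate $F$ by the local polynomial and restrict to the part of $P\cap U$ not already covered by $\mathcal Q$. Summing the squared errors gives the bound
\[
\varepsilon'^2\sum_Q\bigl(\lambda|F|^2_{s,Q^\ast}+\mu\|\mathcal LF\|^2_{L^2(Q^\ast)}\bigr)+C\bar c^{2s}\lambda\sum_P|F|^2_{s,P}.
\]
Two overlap bounds then finish the estimate. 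First, the dilated cubes $Q^\ast$ have bounded overlap, by the usual Whitney argument: comparable sizes for neighbours, and maximality. Second, the grid cubes $P$ are disjoint. With these, the sum is at most
\[
C\varepsilon'^2\bigl(\lambda|F|^2_{s,U'}+\mu\|\mathcal LF\|^2_{L^2(U)}\bigr)+C\bar c^{2s}\lambda|F|^2_{s,U'}.
\]
For the fractional seminorm this needs the superadditivity $\sum_i|F|^2_{s,A_i}\leq N|F|^2_{s,\cup A_i}$ for families with overlap at most $N$. This holds because the double integrals over $A_i\times A_i$ are bounded by $N$ times the one over the union. Summing the dimension counts gives
\[
\dim W\leq\sum_h Ch^{1-d}N_h+C\lambda^{-(d-1)/(2s)}.
\]

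\textbf{Main obstacle: the constants.} Step 2 produces an error $C\bar c^{2s}\lambda$, which grows with $\bar c$, whereas the statement requires $\bar c\geq2$ to be chosen so that the total error is at most $\varepsilon^2\lambda$. This cannot be fixed by taking $\bar c$ large. Instead, the boundary layer must be refined independently of $\bar c$. Take boundary cubes of side $h_b=c_\varepsilon\lambda^{1/(2s)}$ with $c_\varepsilon\leq\bar c$ small, so that $Ch_b^{2s}/\lambda\leq\varepsilon^2/2$. The layer between $\partial U$ and the region covered by $\mathcal Q$ has width $C\bar c\lambda^{1/(2s)}$, so it is covered by $C(\bar c/c_\varepsilon)^d\lambda^{-(d-1)/(2s)}$ such cubes. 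This count is absorbed into $C_\varepsilon\lambda^{-(d-1)/(2s)}$, since $\bar c$ depends only on $\varepsilon,d,s,U$. The remaining freedom is to take $\bar c=2$, or any fixed value, and then $\varepsilon'=\varepsilon/(2\sqrt{N})$, where $N$ is the overlap constant of the $Q^\ast$. The genuinely delicate part is verifying the Whitney properties, namely bounded overlap of the $Q^\ast$ and the $Ch^{1-d}$ count, for a general Lipschitz boundary; for these I would rely on the standard Whitney decomposition together with the volume bound for tubular neighbourhoods of Lipschitz boundaries.
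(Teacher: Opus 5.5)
Your final construction, with the refined boundary grid from your last paragraph, is correct and is essentially the paper's proof: a Whitney decomposition truncated at scale $\bar c\lambda^{1/(2s)}$, the spaces $V_Q$ on the retained cubes with bounded overlap of the $Q^\ast$, and piecewise polynomials from \cref{lem:poly}(a) on a grid of side $c\lambda^{1/(2s)}$ with $c$ small covering the leftover boundary layer. The paper couples the two scales by taking grid side $\lambda^{1/(2s)}/\bar c$ and letting $\bar c$ be large, whereas you fix $\bar c=2$ and choose $c_\varepsilon$ separately, which works equally well; the only thing to tidy is that the polynomial pieces in $W$ should be $\mathbf 1_{P\cap B}\Pol_{\bar s}$ rather than $\mathbf 1_{P\cap U}\Pol_{\bar s}$ (as your assembly step intends, and as the paper does with $\mathbf 1_{P\cap U_\partial}$), so that they do not overlap the cubes of $\mathcal Q$.
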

\begin{proof}
By the Whitney decomposition \citep[Chapter~VI, \S1, Theorem~1 and Proposition~3]{stein1970lipschitz}, $U$ is the union of closed dyadic cubes $Q$ with disjoint interiors and $\mathrm{diam}(Q)\leq\mathrm{dist}(Q,U^c)\leq4\,\mathrm{diam}(Q)$, the cubes $Q^\ast$ lie in $U$, and each point lies in at most $12^d$ of them.
A cube of side $h$ thus lies within $5\sqrt d\,h$ of $\partial U$, and $|\{x:\mathrm{dist}(x,\partial U)\leq t\}|\leq C_{U,T}t$ for $t\leq T$ (cover $\partial U$ by finitely many Lipschitz graphs); hence there are at most $C_Uh^{1-d}$ Whitney cubes of side $h$.
\emph{Interior.}
Let $W_{\mathrm i}$ be the sum of the spaces $V_Q$, extended by zero, over the Whitney cubes $Q$ of side $h_Q\geq\bar c\lambda^{1/(2s)}$.
Since each pair of points lies in at most $12^d$ cubes $Q^\ast$, also in the double integrals of noninteger order, $\sum_Q|F|^2_{s,Q^\ast}\leq12^d|F|^2_{s,U}$ and $\sum_Q\|\mathcal LF\|^2_{L^2(Q^\ast)}\leq12^d\|\mathcal LF\|^2_{L^2(U)}$, so the squared distance from $F$ to $W_{\mathrm i}$ on the union of these cubes is at most $12^d\varepsilon'^2(\lambda|F|^2_{s,U}+\mu\|\mathcal LF\|^2_{L^2(U)})$, and $\dim W_{\mathrm i}\leq C_U\sum_hh^{1-d}N_h$.
\emph{Boundary layer.}
The union $U_\partial$ of the other Whitney cubes lies within $5\sqrt d\,\bar c\lambda^{1/(2s)}$ of $\partial U$.
The cubes $P$ of a grid of side $\lambda^{1/(2s)}/\bar c\leq1/2$ that meet $U_\partial$ lie in $U'$ and within $6\sqrt d\,\bar c\lambda^{1/(2s)}$ of $\partial U$, so there are at most $C_{U,\bar c}\lambda^{-(d-1)/(2s)}$ of them.
Let $W_\partial$ be the sum of the spaces $\{\mathbf 1_{P\cap U_\partial}p:p\in\Pol_{\lceil s\rceil}\}$.
By \cref{lem:poly}(a) and since the $P$ have disjoint interiors, $\mathrm{dist}_{L^2(U_\partial)}(F,W_\partial)^2\leq C\bar c^{-2s}\lambda\sum_P|F|^2_{s,P}\leq C\bar c^{-2s}\lambda|F|^2_{s,U'}$.
\emph{Conclusion.}
The space $W=W_{\mathrm i}+W_\partial$ satisfies $\mathrm{dist}_{L^2(U)}(F,W)^2\leq(12^d\varepsilon'^2+C\bar c^{-2s})(\lambda|F|^2_{s,U'}+\mu\|\mathcal LF\|^2_{L^2(U)})$; choosing $\bar c$ large and then $\varepsilon'$ small proves the lemma.
\end{proof}

\subsection{Approximation for canonical hyperbolic operators}
\begin{prop}[Approximation on a domain, canonical hyperbolic operators]
\label[prop]{prop:canonical}
Let $r\in\{1,2\}$, $s\geq r$, $U\subseteq\R^d$ a bounded Lipschitz domain with $U'$ as in \cref{lem:boundary}, and $\varepsilon>0$.
For $\lambda\in(0,1]$ and $\mu>0$, there is a linear space $W\subseteq L^2(U)$ such that $\mathrm{dist}_{L^2(U)}(F,W)^2\leq\varepsilon^2(\lambda|F|^2_{s,U'}+\mu\|\Dc F\|^2_{L^2(U)})$ for every $F\in H^s(U')$, and $\dim W\leq C_{U,\varepsilon}(\lambda^{-(d-1)/(2s)}\log(2+1/\lambda)+\lambda^{-(d-r)/(2s)}\mu^{-1/2}\ell_{r,d}(\lambda))$.
\end{prop}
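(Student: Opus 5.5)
We apply \cref{lem:boundary} with $\mathcal L=\Dc$, feeding it local spaces obtained from \cref{lem:local} by rescaling. Let $\bar c,\varepsilon'$ be given by \cref{lem:boundary} for $\varepsilon$, $d$, $s$ and $U$. Fix a dyadic $h\in[\bar c\lambda^{1/(2s)},\mathrm{diam}(U)]$ and a cube $Q$ of side $h$ and center $x_0$, so that $Q=x_0+(h/2)Q_0$ and $Q^\ast=x_0+(h/2)Q_0^\ast$. For $F\in H^s(Q^\ast)$, set $q(z)=F(x_0+hz/2)$. Since $\Dc$ is homogeneous of order $r$, $\Dc q(z)=(h/2)^r(\Dc F)(x_0+hz/2)$. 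The change of variables gives
\[
\|F\|^2_{L^2(Q)}=(h/2)^d\|q\|^2_{L^2(Q_0)},\qquad
|q|^2_{s,Q_0^\ast}=(h/2)^{2s-d}|F|^2_{s,Q^\ast},\qquad
\|\Dc q\|^2_{L^2(Q_0^\ast)}=(h/2)^{2r-d}\|\Dc F\|^2_{L^2(Q^\ast)}.
\]
We apply \cref{lem:local} with tolerance $\varepsilon'$ and rescaled parameters $\lambda_h=\lambda(h/2)^{-2s}$ and $\mu_h=\mu(h/2)^{-2r}$. Pulling the resulting space back by $z\mapsto x_0+hz/2$ gives a space $V_Q$ satisfying the hypothesis of \cref{lem:boundary} exactly, because the factors $(h/2)^{d}$ cancel. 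We need $\lambda_h\leq1$, which holds up to a constant since $h\geq\bar c\lambda^{1/(2s)}$ and $\bar c\geq2$. If $\lambda_h$ slightly exceeds $1$, we replace it by $1$; this only enlarges the right-hand side after rescaling, or alternatively we enlarge $\bar c$.

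The dimension is then
\[
N_h\leq C\big(\lambda_h^{-(d-1)/(2s)}+\lambda_h^{-(d-r)/(2s)}\mu_h^{-1/2}\ell_{r,d}(\lambda_h)\big)
=C\big(\lambda^{-(d-1)/(2s)}h^{d-1}+\lambda^{-(d-r)/(2s)}\mu^{-1/2}h^{d-r}h^{r}\ell_{r,d}(\lambda_h)\big),
\]
up to powers of $2$. Here $\lambda_h^{-(d-r)/(2s)}=\lambda^{-(d-r)/(2s)}(h/2)^{d-r}$ and $\mu_h^{-1/2}=\mu^{-1/2}(h/2)^r$. The second term is therefore $\lambda^{-(d-r)/(2s)}\mu^{-1/2}h^{d}\ell_{r,d}(\lambda_h)$, and $\ell_{r,d}(\lambda_h)\leq\ell_{r,d}(\lambda)$ because $\lambda_h\geq c\lambda$.

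Summing over dyadic scales, \cref{lem:boundary} gives
\[
\dim W\leq C\Big(\lambda^{-(d-1)/(2s)}+\sum_h\big(\lambda^{-(d-1)/(2s)}+\lambda^{-(d-r)/(2s)}\mu^{-1/2}h\,\ell_{r,d}(\lambda)\big)\Big).
\]
The first summand is constant in $h$. The number of dyadic scales in $[\bar c\lambda^{1/(2s)},\mathrm{diam}\,U]$ is at most $C\log(2+1/\lambda)$, which produces the term $\lambda^{-(d-1)/(2s)}\log(2+1/\lambda)$ and explains the logarithm in the statement. The second summand is geometric in $h$, so its sum is bounded by $C\,\mathrm{diam}(U)$ and contributes $C_U\lambda^{-(d-r)/(2s)}\mu^{-1/2}\ell_{r,d}(\lambda)$. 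This is the claimed bound, with the distance estimate coming directly from \cref{lem:boundary}.

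The main point to check is the bookkeeping of the scaling: homogeneity of $\Dc$ is essential, and the lower-order factor $c_0$ is absent here since we treat $\Dc$ itself. We also have to confirm that the Whitney count $h^{1-d}$ exactly cancels the $h^{d-1}$ coming from the first term of \cref{lem:local}, leaving only the logarithmic number of scales.
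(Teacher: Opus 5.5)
Your proposal is correct and follows the paper's proof: you rescale each cube to $Q_0$, apply \cref{lem:local} with tolerance $\varepsilon'$ and parameters $(\lambda(h/2)^{-2s},\mu(h/2)^{-2r})$, and sum $h^{1-d}N_h$ over the $O(\log(2+1/\lambda))$ dyadic scales via \cref{lem:boundary}. Two cosmetic points: no hedging is needed for $\lambda_h\leq1$, since $\bar c\geq2$ gives $h/2\geq\lambda^{1/(2s)}$ exactly; and $\lambda_h\geq(\mathrm{diam}(U)/2)^{-2s}\lambda$ only yields $\ell_{r,d}(\lambda_h)\leq C\ell_{r,d}(\lambda)$, with a constant, not an exact inequality.
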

\begin{proof}
Let $\bar c,\varepsilon'$ be given by \cref{lem:boundary} with $\mathcal L=\Dc$, and let $Q$ be a cube of side $h\geq\bar c\lambda^{1/(2s)}$ and center $y_Q$ with $Q^\ast\subseteq U$.
Let $\lambda_Q=(h/2)^{-2s}\lambda\leq1$ and $\mu_Q=(h/2)^{-2r}\mu$; for $q(z)=F(y_Q+(h/2)z)$, squared $L^2$ norms on $Q_0$, $\lambda_Q|q|^2_{s,Q_0^\ast}$ and $\mu_Q\|\Dc q\|^2_{L^2(Q_0^\ast)}$ equal $(h/2)^{-d}$ times squared $L^2$ norms on $Q$, $\lambda|F|^2_{s,Q^\ast}$ and $\mu\|\Dc F\|^2_{L^2(Q^\ast)}$.
Hence \cref{lem:local}, with $\varepsilon'$ and the parameters $(\lambda_Q,\mu_Q)$, gives a space $V_Q$ as required in \cref{lem:boundary}, with $\dim V_Q\leq N_h=C(h^{d-1}\lambda^{-(d-1)/(2s)}+h^d\lambda^{-(d-r)/(2s)}\mu^{-1/2}\ell_{r,d}(\lambda))$, where $\lambda_Q\geq(\mathrm{diam}(U)/2)^{-2s}\lambda$ bounds the logarithm.
There are at most $C\log(2+1/\lambda)$ dyadic scales $h\in[\bar c\lambda^{1/(2s)},\mathrm{diam}(U)]$, and their sum is at most $2\,\mathrm{diam}(U)$, so $\sum_hh^{1-d}N_h$ is bounded by the claimed dimension, and \cref{lem:boundary} proves the proposition.
\end{proof}

\subsection{Proof of \texorpdfstring{\cref{thm:count}}{the effective dimension theorem}}
\begin{proof}
\emph{(i) Sobolev count.}
Since $\theta^\ast G\theta=\|f_\theta\|^2_{L^2(\Omega)}\leq\|f_\theta\|^2_{L^2(\T)}=(4L)^d\|\theta\|_2^2$, (A1) gives $\Sigma\preceq\kappa G\preceq\kappa(4L)^dI$, and \cref{lem:counting}(i) yields $\mathcal N(\lambda,0)\leq\sum_{k\in\Z^d}\kappa(4L)^d/(\kappa(4L)^d+\lambda(1+\|k\|_2^{2s}))\leq\sum_{k\in\Z^d}\min(1,C\lambda^{-1}\|k\|_2^{-2s})\leq C\lambda^{-d/(2s)}$, because $2s>d$.

\emph{(ii) Hyperbolic operators.}
Let $\lambda,\mu\in(0,1]$, $\tau\in(0,1]$, and let $\theta$ satisfy $\tau\lambda\|S\theta\|_2^2+\tau\mu\|\D(f_\theta)\|^2_{L^2(\Omega)}\leq1$ and $\kappa\|f_\theta\|^2_{L^2(\Omega)}\leq1$; write $f=f_\theta$, so that $\tau\lambda\|f\|^2_{H^s(\T)}\leq C$.
Let $\beta,\mathbf M,c_0$ be given by \cref{lem:normal}, $U=\mathbf M^{-1}\Omega$, which is a bounded Lipschitz domain because linear maps preserve the uniform cone property, which characterizes bounded Lipschitz domains \citep[Theorem~1.2.2.2]{grisvard1985elliptic}, $U'$ as in \cref{lem:boundary}, and $F(y)=e^{-\langle\beta,\mathbf My\rangle}f(\mathbf My)$.
Choose $w,w_2\in C_c^\infty(\R^d)$ with $w=1$ on $\mathbf MU'$ and $w_2=1$ on $\mathrm{supp}\,w$; then $F=((e^{-\langle\beta,\cdot\rangle}w)(w_2f))\circ\mathbf M$ on $U'$, and \cref{lem:fourier}(ii)--(iii) give $\|F\|_{H^s(U')}\leq C\|f\|_{H^s(\T)}$.
By \cref{lem:normal}, $\Dc F(y)=e^{-\langle\beta,\mathbf My\rangle}(\D(f)-c_0f)(\mathbf My)$, so, as $\tau\mu\leq1$, $\tau\mu\|\Dc F\|^2_{L^2(U)}\leq C(\tau\mu\|\D(f)\|^2_{L^2(\Omega)}+\|f\|^2_{L^2(\Omega)})\leq C(1+1/\kappa)$.
Thus $\tau\lambda|F|^2_{s,U'}+\tau\mu\|\Dc F\|^2_{L^2(U)}\leq C$, and \cref{prop:canonical} with parameters $(\tau\lambda,\tau\mu)$ gives $W_U$, independent of $\theta$, with $\mathrm{dist}_{L^2(U)}(F,W_U)^2\leq C\varepsilon^2$; then $W_\tau=\{x\mapsto e^{\langle\beta,x\rangle}v(\mathbf M^{-1}x):v\in W_U\}$ has the same dimension, and changing variables gives $\mathrm{dist}_{L^2(\Omega)}(f,W_\tau)^2\leq C\varepsilon^2\leq1/(3\kappa)$ for $\varepsilon$ small enough.
Since $\log(2+1/(\tau\lambda))\leq C\log(2+1/\lambda)(1+\log(1/\tau))$, the dimension of $W_\tau$ is at most $C(1+\log(1/\tau))(\tau^{-a_1}\lambda^{-(d-1)/(2s)}\log(2+1/\lambda)+\tau^{-a_2}\lambda^{-(d-r)/(2s)}\mu^{-1/2}\ell_{r,d}(\lambda))$, with $a_1=(d-1)/(2s)$ and $a_2=1/2+(d-r)/(2s)$.
Both exponents are less than one, since $s>\max(r,d-r)\geq d/2$ and $s>d-r$, so \cref{lem:counting}(ii) proves (ii).

\emph{(iii) Elliptic operators.}
Let $\mu>0$, $\tau\in(0,1]$, $\lambda'=\tau\lambda$ and $\mu'=\tau\mu$, and let $f=f_\theta$ with $\lambda'\|S\theta\|_2^2+\mu'\|\D(f)\|^2_{L^2(\Omega)}\leq1$.
With $\Omega'=\{x:\mathrm{dist}(x,\Omega)<\sqrt d\}$, \cref{lem:fourier}(ii) gives $\lambda'|f|^2_{s,\Omega'}\leq C\lambda'\|f\|^2_{H^s(\T)}\leq C$.
Choose an integer $j_0\geq1$ with $sd/(j_0+r)\leq(d-1)/2$, let $\varepsilon>0$, and let $\bar c,\varepsilon'$ be given by \cref{lem:boundary} with $U=\Omega$ and $\mathcal L=\D$.
For a cube $Q$ of side $h\geq\bar c\lambda'^{1/(2s)}$ and center $x_Q$ with $Q^\ast\subseteq\Omega$, the function $q(z)=f(x_Q+(h/2)z)$ satisfies $(\D f)(x_Q+(h/2)z)=(h/2)^{-r}(\D_{h/2}q)(z)$, with $h/2\leq\mathrm{diam}(\Omega)/2$.
As in the proof of \cref{prop:canonical}, \cref{lem:localell} with $\varepsilon'$, $\eta=h/2$ and the parameters $(h/2)^{-2s}\lambda'\leq1$ and $(h/2)^{-2r}\mu'$ gives a space $V_Q$ as required in \cref{lem:boundary}, with $\dim V_Q\leq N_h=C(1+x^{sd/(j_0+r)}+h^d\mu'^{-d/(2r)})$, where $x=(h/2)\lambda'^{-1/(2s)}\geq1$, so that $x^{sd/(j_0+r)}\leq x^{(d-1)/2}$.
Since $d\geq2$, the sums $\sum_hh^{1-d}$ and $\sum_hh^{1-d}x^{(d-1)/2}$ over dyadic $h\geq\bar c\lambda'^{1/(2s)}$ are geometric and at most $C\lambda'^{-(d-1)/(2s)}$, and $\sum_hh\leq2\,\mathrm{diam}(\Omega)$.
\Cref{lem:boundary} thus gives $W_\tau$, independent of $\theta$, with $\mathrm{dist}_{L^2(\Omega)}(f,W_\tau)^2\leq C\varepsilon^2\leq1/(3\kappa)$ for $\varepsilon$ small enough, and $\dim W_\tau\leq C(\tau^{-(d-1)/(2s)}\lambda^{-(d-1)/(2s)}+\tau^{-d/(2r)}\mu^{-d/(2r)})$.
Both exponents of $\tau$ are less than one, since $d<2r\leq2s$, so \cref{lem:counting}(ii) proves (iii).

All constants are independent of $\lambda$, $\mu$ and $m$, which proves (i)--(iii); finally, $\mathcal N(\lambda,\mu)=\mathrm{tr}(\Sigma^{1/2}(\Sigma+\lambda S^2+\mu D^\ast GD)^{-1}\Sigma^{1/2})\leq\mathcal N(\lambda,0)$, because the inverse is nonincreasing for the Loewner order.
\end{proof}

\section{Upper bounds}
\label{app:upper}

\subsection{Upper bound for given parameters}
\begin{lem}[Oracle inequality]
\label[lem]{lem:oracle}
Assume (A1)--(A2), let $\D$ be linear with constant real coefficients, $s>d/2$, $\lambda>0$ and $\mu\geq0$.
Let $g\in L^2(\Omega)$ be known and real-valued, and replace $\|\D(f_\theta)\|^2_{L^2(\Omega)}$ by $\|\D(f_\theta)-g\|^2_{L^2(\Omega)}$ in~\eqref{eq:estimator} ($g=0$ gives the original estimator).
For every fixed $\theta$,
\begin{multline*}
\EE\|\hat f_n-f^\star\|^2_{L^2(\PP_X)}
\leq\Big(1+\frac{a_s}{\lambda n}\Big)^2
\Big(\|f^\star-f_\theta\|^2_{L^2(\PP_X)}+\lambda\|S\theta\|_2^2+\mu\|\D(f_\theta)-g\|^2_{L^2(\Omega)}\Big)\\
+\Big(1+\frac{a_s}{\lambda n}\Big)\frac{\sigma^2}n\mathcal N(\lambda,\mu).
\end{multline*}
\end{lem}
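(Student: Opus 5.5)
The plan is the random-design ridge regression argument of \citet{mourtada2022an}, run in the finite-dimensional Fourier model. Set $B=\lambda S^2+\mu D^\ast GD\succ0$ and $A=\Sigma+B$. When $g\neq0$, the penalty $\mu\|f_{D\theta}-g\|^2_{L^2(\Omega)}$ differs from $\mu\theta^\ast D^\ast GD\theta$ only by a term linear in $\theta$ and a constant. Hence $\hat\theta=A_n^{-1}(v_n+\mu D^\ast\gamma_g)$ for a fixed vector $\gamma_g$, and for every fixed $\theta$ the objective is an exact quadratic expansion around $\theta$.

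\emph{Bias/variance split.} Write $Y_i=f_\theta(X_i)+(f^\star-f_\theta)(X_i)+\varepsilon_i$. This gives
\[
\hat\theta-\theta=A_n^{-1}\Big(\tfrac1n\sum_i\phi(X_i)\big((f^\star-f_\theta)(X_i)+\varepsilon_i\big)-\lambda S^2\theta-\mu D^\ast(GD\theta-\gamma_g)\Big).
\]
The $\varepsilon$-part is conditionally centred given the design, so by (A2) it contributes at most
\[
\frac{\sigma^2}n\EE\,\mathrm{tr}(A_n^{-1}\hat\Sigma A_n^{-1}\Sigma)\leq\frac{\sigma^2}n\EE\,\mathrm{tr}(A_n^{-1}\Sigma),
\]
using $\hat\Sigma\preceq A_n$. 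The risk is measured at a fresh point $X_{n+1}$, via $\|\hat f_n-f^\star\|^2_{L^2(\PP_X)}=\EE_{X_{n+1}}|\hat f_n(X_{n+1})-f^\star(X_{n+1})|^2$. The remaining deterministic part is handled by the same device, after adding and subtracting $f_\theta(X_{n+1})$.

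\emph{Leave-one-out step.} This is the key step. Let $A_{n+1}=A_n+\frac1n\phi(X_{n+1})\phi(X_{n+1})^\ast$ and $x=\phi(X_{n+1})$. Sherman--Morrison gives
\[
x^\ast A_n^{-1}x=x^\ast A_{n+1}^{-1}x\,\big(1+\tfrac1nx^\ast A_n^{-1}x\big).
\]
Since $A_n\succeq\lambda S^2$, the leverage bound of \cref{app:tools} yields $x^\ast A_n^{-1}x\leq a_s/\lambda$, hence
\[
x^\ast A_n^{-1}x\leq\Big(1+\frac{a_s}{\lambda n}\Big)x^\ast A_{n+1}^{-1}x.
\]
By exchangeability of $X_1,\dots,X_{n+1}$, $\EE\,x^\ast A_{n+1}^{-1}x$ equals $\frac{n}{n+1}\EE\,\mathrm{tr}\big(A_{n+1}^{-1}\tfrac1n\sum_{i\leq n+1}\phi(X_i)\phi(X_i)^\ast\big)$. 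The map $M\mapsto\mathrm{tr}(M(M+B)^{-1})$ is operator concave, so Jensen bounds this by about $\mathrm{tr}(A^{-1}\Sigma)=\mathcal N(\lambda,\mu)$. This produces the factor $(1+a_s/(\lambda n))$ on the variance term. For the bias, the same Sherman--Morrison identity is applied to the vector $A_n^{-1}(\cdots)$ evaluated at $x$, now with a squared factor. The approximation error $f^\star-f_\theta$ and the penalty gradient are then controlled by the population quadratic form $\|f^\star-f_\theta\|^2_{L^2(\PP_X)}+\theta^\ast B\theta$ shifted by the source term, which equals the middle bracket of the statement.

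\emph{Main obstacle.} The bias term is the hardest part. After leave-one-out it involves
\[
\EE\big|x^\ast A_{n+1}^{-1}\tfrac1n\textstyle\sum_i\phi(X_i)r(X_i)-(\text{penalty gradient})\big|^2,\qquad r=f^\star-f_\theta.
\]
To avoid any concentration argument, I would first try to identify it with the excess of the regularized empirical objective at $\theta$ over its minimizer, which is nonpositive. This is Mourtada's ``$\hat\theta$ minimizes the regularized empirical risk'' trick, and it gives the approximation bracket exactly, with one factor $(1+a_s/(\lambda n))$ from each of the two Sherman--Morrison substitutions.
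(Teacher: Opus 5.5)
This is the paper's argument. You split off the conditionally centred noise part and bound it exactly as the paper does (Sherman--Morrison leverage factor, exchangeability over $X_1,\dots,X_{n+1}$, concavity plus Jensen). You then treat the noiseless part by the same leave-one-out device, combined with comparing the $(n+1)$-point regularized objective at its minimizer $\theta_+$ (noiseless responses $y_i=f^\star(X_i)$, normal matrix $nA_{n+1}$) with its value at $\theta$.

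When you write out the bias step, keep the term $r(X_{n+1})$ that your display drops; two further points need correcting there.
\begin{itemize}
\item The quantity to control is the residual $y_{n+1}-x^\ast\theta_+=r(X_{n+1})-x^\ast(\theta_+-\theta)$.
\item The factor $(1+a_s/(\lambda n))^2$ comes from squaring the single identity $y_{n+1}-x^\ast\hat\theta_0=(1+\tfrac1n x^\ast A_n^{-1}x)(y_{n+1}-x^\ast\theta_+)$, with $\hat\theta_0$ the noiseless $n$-point fit, not from two separate Sherman--Morrison substitutions.
\item After exchangeability, the average in-sample squared residual of $\theta_+$ is \emph{bounded} by the objective at $\theta_+$, hence at $\theta$, divided by $n+1$. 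It is not identified with an excess objective.
\end{itemize}
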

\begin{proof}
The argument is the leave-one-out analysis of \citet{mourtada2022an}, adapted to a general quadratic penalty and to a misspecified target.
Let $B=\lambda S^2+\mu D^\ast GD$, and let $\hat\theta_0$ be the minimizer with responses $f^\star(X_i)$ in place of $Y_i$.
Subtracting the normal equations gives $\hat\theta=\hat\theta_0+\hat\theta_\varepsilon$ with $\hat\theta_\varepsilon=(\hat\Sigma+B)^{-1}n^{-1}\sum_i\varepsilon_i\phi(X_i)$.
Since the pairs $(X_i,\varepsilon_i)$ are independent, $\EE(\varepsilon_i\mid X_1,\ldots,X_n)=0$ by (A2), so $\hat\theta_\varepsilon$ has conditional mean zero and $\EE\|\hat f_n-f^\star\|^2_{L^2(\PP_X)}=\EE\|f_{\hat\theta_0}-f^\star\|^2_{L^2(\PP_X)}+\EE\|f_{\hat\theta_\varepsilon}\|^2_{L^2(\PP_X)}$.

Let $X_{n+1}$ be an independent input, $\phi_i=\phi(X_i)$, $A_+=\sum_{j\leq n+1}\phi_j\phi_j^\ast+nB$, $A_{-i}=A_+-\phi_i\phi_i^\ast$ and $h_i=\phi_i^\ast A_{-i}^{-1}\phi_i$.
Since $A_{-i}\succeq n\lambda S^2$, $0\leq h_i\leq a_s/(\lambda n)$, and the Sherman--Morrison formula gives $h_i=(1+h_i)\phi_i^\ast A_+^{-1}\phi_i$.

\emph{Noise.}
By (A2) and $\hat\Sigma\preceq\hat\Sigma+B$, $\EE\|f_{\hat\theta_\varepsilon}\|^2_{L^2(\PP_X)}\leq\frac{\sigma^2}n\EE\,\mathrm{tr}((\hat\Sigma+B)^{-1}\Sigma)=\sigma^2\EE h_{n+1}$.
Exchangeability and the Sherman--Morrison identity give $\EE h_{n+1}\leq\frac{1+a_s/(\lambda n)}{n+1}\EE\,\mathrm{tr}(A_+^{-1}(A_+-nB))$.
The map $A\mapsto\mathrm{tr}(A^{-1}B)$ is convex on positive definite matrices, since $u^\ast A^{-1}u=\sup_v(2\mathrm{Re}(u^\ast v)-v^\ast Av)$ and $B=\sum_ju_ju_j^\ast$; hence $A\mapsto\mathrm{tr}(A^{-1}(A-nB))$ is concave, and Jensen's inequality with $\EE A_+=(n+1)\Sigma+nB\succeq n(\Sigma+B)$ gives $\EE\,\mathrm{tr}(A_+^{-1}(A_+-nB))\leq(n+1)\mathrm{tr}(((n+1)\Sigma+nB)^{-1}\Sigma)\leq\frac{n+1}n\mathcal N(\lambda,\mu)$, so that $\EE\|f_{\hat\theta_\varepsilon}\|^2_{L^2(\PP_X)}\leq(1+\frac{a_s}{\lambda n})\frac{\sigma^2}n\mathcal N(\lambda,\mu)$.

\emph{Noiseless fit.}
Let $y_i=f^\star(X_i)$, let $\theta_+$ minimize $\sum_{j\leq n+1}|y_j-f_\theta(X_j)|^2+n\lambda\|S\theta\|_2^2+n\mu\|\D(f_\theta)-g\|^2_{L^2(\Omega)}$, and let $\theta_{-i}$ minimize the same objective without observation $i$.
Then $(\theta_{-i},X_i)$ has the law of $(\hat\theta_0,X_{n+1})$.
Subtracting the normal equations, in which the source terms cancel, gives $A_+(\theta_+-\theta_{-i})=\phi_i(y_i-\phi_i^\ast\theta_{-i})$, hence $y_i-\phi_i^\ast\theta_{-i}=(1+h_i)(y_i-\phi_i^\ast\theta_+)$.
Averaging over $i$ and comparing the objective at $\theta_+$ with its value at $\theta$ gives
\begin{align*}
\EE\|f_{\hat\theta_0}-f^\star\|^2_{L^2(\PP_X)}
&\leq\Big(1+\frac{a_s}{\lambda n}\Big)^2\EE\frac1{n+1}\sum_{i\leq n+1}|y_i-\phi_i^\ast\theta_+|^2\\
&\leq\Big(1+\frac{a_s}{\lambda n}\Big)^2\Big(\|f^\star-f_\theta\|^2_{L^2(\PP_X)}+\lambda\|S\theta\|_2^2+\mu\|\D(f_\theta)-g\|^2_{L^2(\Omega)}\Big).
\end{align*}
Adding the noise bound proves the oracle inequality for every fixed $\theta$.
\end{proof}

\begin{lem}[Upper bound for given parameters]
\label[lem]{lem:upper}
Assume (A1)--(A2), let $\D$ be linear of order $r$ with constant real coefficients, let $s>\max(r,d/2)$, and let $g\in L^2(\Omega)$ be known and real-valued.
There is $C>0$ such that, for all $n\geq1$, $\rho,\sigma>0$, $\delta\geq0$, $\lambda\in[1/n,1]$, $\mu\geq0$ and $m\geq\max\{\lambda^{-1/(2s)},(\mu/\lambda)^{1/(2s-2r)}\}$, the estimator $\hat f_n$ of \cref{lem:oracle} satisfies, with $\F_g(\rho,\delta)=\{f\in\Hc(\rho):\|\D(f)-g\|_{L^2(\Omega)}\leq\delta\}$,
\[
\sup_{f^\star\in\F_g(\rho,\delta)}\EE\|\hat f_n-f^\star\|^2_{L^2(\PP_X)}\leq C\Big(\lambda\rho^2+\mu\delta^2+\frac{\sigma^2}n\mathcal N(\lambda,\mu)\Big).
\]
For $g=0$, $\F_0(\rho,\delta)=\F(\rho,\delta)$ and $\hat f_n$ is the estimator~\eqref{eq:estimator}.
\end{lem}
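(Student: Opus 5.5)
We will apply \cref{lem:oracle} with a well-chosen comparison vector $\theta$. Fix $f^\star\in\F_g(\rho,\delta)$. Since $\lambda\geq1/n$, the factor $(1+a_s/(\lambda n))$ is at most $1+a_s$, a constant. It therefore suffices to find $\theta$ with
\[
\|f^\star-f_\theta\|^2_{L^2(\PP_X)}+\lambda\|S\theta\|_2^2+\mu\|\D(f_\theta)-g\|^2_{L^2(\Omega)}\leq C(\lambda\rho^2+\mu\delta^2).
\]

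\textbf{Construction of $\theta$.} By \cref{cor:extension} with $U=\Omega$, $f^\star$ coincides on $\Omega$ with a $4L$-periodic function $\bar f$ satisfying $\|\bar f\|_{H^s(\T)}\leq C\rho$. Let $\theta$ be the Fourier coefficients of $\bar f$ with $\|k\|_\infty\leq m$, so that $f_\theta=\bar f_m$. The equivalence of norms in \cref{app:tools} gives $\lambda\|S\theta\|_2^2\leq C\lambda\rho^2$.

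\textbf{Approximation term.} By (A1), $\|f^\star-f_\theta\|^2_{L^2(\PP_X)}\leq\kappa\|\bar f-\bar f_m\|^2_{L^2(\T)}\leq Cm^{-2s}\rho^2$. Since $m\geq\lambda^{-1/(2s)}$, this is at most $C\lambda\rho^2$.

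\textbf{Residual term.} Since $\D(\bar f)=\D(f^\star)$ on $\Omega$, the triangle inequality gives
\[
\|\D(f_\theta)-g\|_{L^2(\Omega)}\leq\delta+\|\D(\bar f-\bar f_m)\|_{L^2(\Omega)}.
\]
The periodic bound $\|\D v\|_{L^2(\Omega)}\leq C\|v\|_{H^r(\T)}$ combined with the truncation estimate at order $t=r\leq s$ bounds the second term by $Cm^{r-s}\rho$. Hence
\[
\mu\|\D(f_\theta)-g\|^2\leq2\mu\delta^2+C\mu m^{-2(s-r)}\rho^2\leq2\mu\delta^2+C\lambda\rho^2,
\]
using $m\geq(\mu/\lambda)^{1/(2s-2r)}$, which is legitimate since $s>r$.

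\textbf{Conclusion and main check.} Plugging these bounds into \cref{lem:oracle} gives the claim with a constant independent of $n,\rho,\sigma,\delta,\lambda,\mu,m$. The main obstacle is making sure the residual truncation uses exactly the hypothesis on $m$, and that the extension constant depends only on $\Omega,s,L$. The remaining points are routine: the estimator is well defined because the objective is strictly convex, and the real-valuedness argument still holds with a real source $g$.
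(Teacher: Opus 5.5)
Your proposal is correct and follows essentially the same route as the paper's proof. Both arguments take the periodic extension $\bar f$ from \cref{cor:extension} and use its truncation to $\|k\|_\infty\le m$ as the comparison vector. Both apply the truncation bound at orders $0$ and $r$ together with the two conditions on $m$ to get $C(\lambda\rho^2+\mu\delta^2)$, and both use $\lambda\ge1/n$ to bound the factor of \cref{lem:oracle} by $1+a_s$.
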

\begin{proof}
Fix $f^\star\in\F_g(\rho,\delta)$.
By \cref{cor:extension}, $f^\star$ coincides on $\Omega$ with a periodic function $\bar f$ such that $\|\bar f\|_{H^s(\T)}\leq C\rho$; let $\theta=(\hat{\bar f}_k)_{\|k\|_\infty\leq m}$, so that $f_\theta$ is the truncation of $\bar f$.
The norm equivalence and the truncation bound of \cref{app:tools} give $\|S\theta\|_2^2\leq C\rho^2$ and $\|f^\star-f_\theta\|^2_{L^2(\PP_X)}\leq\kappa\|\bar f-f_\theta\|^2_{L^2(\T)}\leq C\rho^2m^{-2s}$.
Since $\bar f$ and $f^\star$ have the same weak derivatives on $\Omega$, the bound on $\D$ of \cref{app:tools} gives $\|\D(f_\theta)-g\|_{L^2(\Omega)}\leq\|\D(f^\star)-g\|_{L^2(\Omega)}+C\|\bar f-f_\theta\|_{H^r(\T)}\leq\delta+C\rho m^{r-s}$.
The choice of $m$ gives $m^{-2s}\leq\lambda$ and $\mu m^{2r-2s}\leq\lambda$, hence $\|f^\star-f_\theta\|^2_{L^2(\PP_X)}+\lambda\|S\theta\|_2^2+\mu\|\D(f_\theta)-g\|^2_{L^2(\Omega)}\leq C(\lambda\rho^2+\mu\delta^2)$.
Since $\lambda n\geq1$, the factor $1+a_s/(\lambda n)$ of \cref{lem:oracle} is at most $1+a_s$.
Thus \cref{lem:oracle} bounds the risk at $f^\star$ by $C(\lambda\rho^2+\mu\delta^2+\sigma^2\mathcal N(\lambda,\mu)/n)$, with $C$ independent of $f^\star$, which proves the lemma.
\end{proof}

\subsection{Proof of \texorpdfstring{\cref{thm:elliptic}}{the elliptic theorem} and of the upper bound of \texorpdfstring{\cref{thm:hyperbolic}}{the hyperbolic theorem}}
\begin{proof}
Both proofs apply \cref{lem:upper} with $g=0$ and \cref{thm:count}, whose hypotheses hold because $s>r>d/2$ in \cref{thm:elliptic} and $s>\max(r,d-r)\geq d/2$ in \cref{thm:hyperbolic}.
Take $C_0\geq1$, so that $\sigma^2/(n\rho^2)\leq1$.
For $q\in\{d-1,d\}$, the definition $\gamma(s,q)=2s/(2s+q)$ and the sample-size assumption give
\begin{equation}
\frac{\sigma^2}n\Big(\frac{\sigma^2}{n\rho^2}\Big)^{-\gamma(s,q)q/(2s)}=\rho^2\Big(\frac{\sigma^2}{n\rho^2}\Big)^{\gamma(s,q)},
\qquad
\frac1n\leq\Big(\frac{\sigma^2}{n\rho^2}\Big)^{\gamma(s,d-1)}\leq\Big(\frac{\sigma^2}{n\rho^2}\Big)^{\gamma(s,d)}\leq1.
\label{eq:id1}
\end{equation}

\emph{Case (b) of both theorems.}
Here $\mu=0$ and $\lambda=(\sigma^2/(n\rho^2))^{\gamma(s,d)}\in[1/n,1]$ by~\eqref{eq:id1}.
By \cref{lem:upper}, \cref{thm:count}(i) and~\eqref{eq:id1} with $q=d$, the risk is at most $C(\lambda\rho^2+\sigma^2\lambda^{-d/(2s)}/n)=2C\rho^2(\sigma^2/(n\rho^2))^{\gamma(s,d)}$, which is the bound of (b).

\emph{Case (a) of both theorems.}
In the elliptic case, let $\delta'=\delta$, $p=2d/(2r+d)$, $e=2r/(2r+d)$ and $\lambda_\delta=\rho^{-2}\delta^{2d/(2r+d)}(\sigma^2/n)^{\gamma(r,d)}=(\delta'/\rho)^p(\sigma^2/(n\rho^2))^e$.
In the hyperbolic case, let $\delta'=\delta\ell$, $p=e=2s/(3s+d-r)$ and $\lambda_\delta=(\sigma^2\delta\ell/(n\rho^3))^p=(\delta'/\rho)^p(\sigma^2/(n\rho^2))^e$.
In both cases $\lambda=\max\{(\sigma^2/(n\rho^2))^{\gamma(s,d-1)},\lambda_\delta\}$, and $\rho^2\lambda_\delta$ is the residual-dependent term of the bound to prove.
Since $\delta_n/\rho=(\sigma^2/(n\rho^2))^{(s-r)/(2s+d)}$ and a direct computation gives $e=(2s-p(s-r))/(2s+d)$ in both cases,
\begin{equation}
\lambda_\delta=\Big(\frac{\sigma^2}{n\rho^2}\Big)^{\gamma(s,d)}\Big(\frac{\delta'}{\delta_n}\Big)^p.
\label{eq:id2}
\end{equation}
As $\delta'\leq\delta_n$, \eqref{eq:id1} and~\eqref{eq:id2} give $1/n\leq\lambda$ and $\lambda\leq(\sigma^2/(n\rho^2))^{\gamma(s,d)}\leq1$.
Moreover, $\lambda\geq(\sigma^2/(n\rho^2))^{\gamma(s,d-1)}\geq\sigma^2/(n\rho^2)$, so $\log(2+1/\lambda)\leq\log(2+n\rho^2/\sigma^2)$ and $\ell_{r,d}(\lambda)\leq\ell$; also $\mu>0$, $\mu\delta^2\leq\lambda\rho^2$, and $\mu\leq\lambda^{(r-1)/s}\leq1$ in the hyperbolic case.
Let $\Lambda=1$ and $V=\mu^{-d/(2r)}$ in the elliptic case, and $\Lambda=\log(2+n\rho^2/\sigma^2)$ and $V=\lambda^{-(d-r)/(2s)}\mu^{-1/2}\ell$ in the hyperbolic case.
By \cref{lem:upper} and \cref{thm:count}(iii), resp.\ (ii), the risk is at most $C(\lambda\rho^2+\Lambda\sigma^2\lambda^{-(d-1)/(2s)}/n+\sigma^2V/n)$.
By~\eqref{eq:id1} with $q=d-1$ and $\lambda\geq(\sigma^2/(n\rho^2))^{\gamma(s,d-1)}$, $\sigma^2\lambda^{-(d-1)/(2s)}/n\leq\rho^2(\sigma^2/(n\rho^2))^{\gamma(s,d-1)}$.
If $\mu$ equals its first cap, $\lambda^{r(d-1)/(sd)}$, resp.\ $\lambda^{(r-1)/s}$, then $V=\lambda^{-(d-1)/(2s)}$, resp.\ $V=\lambda^{-(d-1)/(2s)}\ell$ with $\ell\leq\Lambda$, so $\sigma^2V/n\leq\Lambda\rho^2(\sigma^2/(n\rho^2))^{\gamma(s,d-1)}$.
Otherwise $\delta>0$ and $\mu=\lambda\rho^2/\delta^2$, so $\sigma^2V/n$ equals $(\sigma^2/n)(\delta^2/(\lambda\rho^2))^{d/(2r)}$ in the elliptic case and $(\sigma^2\delta\ell/(n\rho))\lambda^{-(s+d-r)/(2s)}$ in the hyperbolic case.
This is at most $\rho^2\lambda$, since this inequality is equivalent to $\lambda^{(2r+d)/(2r)}\geq\lambda_\delta^{(2r+d)/(2r)}$, resp.\ $\lambda^{(3s+d-r)/(2s)}\geq\lambda_\delta^{(3s+d-r)/(2s)}$.
Hence, using $\lambda\leq(\sigma^2/(n\rho^2))^{\gamma(s,d-1)}+\lambda_\delta$, the risk is at most $C\rho^2(\Lambda(\sigma^2/(n\rho^2))^{\gamma(s,d-1)}+\lambda_\delta)$.
This is the bound of (a) in \cref{thm:elliptic}, resp.\ \cref{thm:hyperbolic}.

Combining both cases, the estimator has the asserted worst-case risk over $\F(\rho,\delta)$ in each residual regime, which proves \cref{thm:elliptic} and the upper bound of \cref{thm:hyperbolic}.
\end{proof}

\subsection{Proof of the elliptic minimax rate}
\begin{proof}
As in the proof of \cref{thm:universal}, the residual term $\delta^{2d/(2r+d)}(\sigma^2/n)^{\gamma(r,d)}$ of the displayed rate equals the Sobolev term $\rho^2(\sigma^2/(n\rho^2))^{\gamma(s,d)}$ times $(\delta/\delta_n)^{2d/(2r+d)}$.
Hence the minimum in the displayed rate is the residual term when $\delta\leq\delta_n$ and the Sobolev term otherwise, and cases (a) and (b) of \cref{thm:elliptic} give the upper bound.
For the lower bound, after enlarging $C_0$, \cref{prop:ellexact} and $\F(\rho,0)\subseteq\F(\rho,\delta)$ bound $\M_n(\F(\rho,\delta))$ from below by $c$ times the exact-solution term.
Let $C_0'$ be the constant of \cref{thm:universal}, which applies since $s>r>d/2$.
If $n\delta^2\geq C_0'\sigma^2$, that theorem bounds $\M_n(\F(\rho,\delta))$ from below by $c$ times the minimum, after enlarging $C_0$ once more.
Otherwise, including $\delta=0$, the minimum is at most $\delta^{2d/(2r+d)}(\sigma^2/n)^{\gamma(r,d)}=(\sigma^2/n)(n\delta^2/\sigma^2)^{d/(2r+d)}\leq C\sigma^2/n\leq C\rho^2(\sigma^2/(n\rho^2))^{\gamma(s,d-1)}$, by $\sigma^2/(n\rho^2)\leq1$.
In both cases, $\M_n(\F(\rho,\delta))$ is at least $c$ times the maximum, hence $c/2$ times the sum, of the two terms of the displayed rate.
Thus, on domains with a flat face and under the conditions of \cref{thm:elliptic}, the minimax risk lies between constant multiples of the displayed rate.
\end{proof}

\section{Source terms and adaptation}
\label{app:gain}

\subsection{Proof of \texorpdfstring{\cref{prop:source}}{the source-term proposition}}
\begin{proof}
(i) For $h\in\F(\rho/2,\delta)$, linearity gives $\D(f_0+h)-g=\D(h)$ and the triangle inequality gives $\|f_0+h\|_{H^s(\Omega)}\leq\rho$, so $f_0+\F(\rho/2,\delta)\subseteq\F_g(\rho,\delta)$.
Since $f_0$ is known, the map $\hat f\mapsto f_0+\hat f((X_i,Y_i-f_0(X_i))_{i\leq n})$ is a bijection between estimators that turns regression on $\F(\rho/2,\delta)$ into regression on $f_0+\F(\rho/2,\delta)$ with the same noise and the same prediction error.
Hence $\M_n(\F_g(\rho,\delta))\geq\M_n(f_0+\F(\rho/2,\delta))=\M_n(\F(\rho/2,\delta))$.

(ii) This is \cref{lem:upper}.

The lower bounds over $\F_g(\rho,\delta)$ follow from (i).
The upper-bound proofs in \cref{app:upper} use $\F(\rho,\delta)$ only through \cref{lem:upper} with $g=0$; replacing it by \cref{lem:upper} with the given $g$ yields the upper bounds of \cref{thm:elliptic,thm:hyperbolic} over $\F_g(\rho,\delta)$ with the same parameters, since $\mathcal N(\lambda,\mu)$ does not depend on $g$.
This proves (i), (ii) and the stated consequences.
\end{proof}

\subsection{Selection on an independent validation sample}
\begin{lem}[Finite validation selection]
\label[lem]{lem:validation}
Let $|f^\star|\leq R$, and assume $\EE[\exp(t\varepsilon)\mid X]\leq\exp(\sigma^2t^2/2)$ for every $t\in\R$.
Let $u_1,\dots,u_J$, $J\geq2$, be functions with values in $[-R,R]$, independent of a sample $(X_i,Y_i)_{i\leq n_2}$ of independent copies of $(X,Y)$.
Let $\hat\jmath$ minimize $\sum_{i\leq n_2}(Y_i-u_j(X_i))^2$, with ties broken by a fixed order.
There is a universal constant $C$ such that
\[
\EE\big[\|u_{\hat\jmath}-f^\star\|_{L^2(\PP_X)}^2\mid u_1,\dots,u_J\big]
\leq3\min_{j\leq J}\|u_j-f^\star\|_{L^2(\PP_X)}^2+C(R^2+\sigma^2)\frac{\log J}{n_2}.
\]
\end{lem}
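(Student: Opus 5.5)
Conditioning on $u_1,\dots,u_J$, the candidates are fixed bounded functions and the validation sample is i.i.d., so the lemma is a finite-class model selection bound. I would prove it with an excess-loss decomposition and Bernstein-type concentration plus a union bound over the $J$ candidates. For each $j$, set $g_j=u_j-f^\star$, so $|g_j|\leq2R$. Then
\[
(Y-u_j(X))^2-(Y-f^\star(X))^2=g_j(X)^2-2\varepsilon g_j(X).
\]
Write $Z_{j,i}=g_j(X_i)^2-2\varepsilon_ig_j(X_i)$ and $\Delta_j=\|g_j\|^2_{L^2(\PP_X)}$, so that $\EE Z_{j,i}=\Delta_j$. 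Let $j^\star$ minimize $\Delta_j$. By definition of $\hat\jmath$, we have $\sum_iZ_{\hat\jmath,i}\leq\sum_iZ_{j^\star,i}$.

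\textbf{Step 1: one-sided tail bounds.} For each fixed $j$, I would bound the centered sum $\sum_i(\Delta_j-Z_{j,i})$ from above and $\sum_i(Z_{j^\star,i}-\Delta_{j^\star})$ from above, using a Bernstein/sub-exponential moment generating function bound.
\begin{itemize}
\item The quadratic part $g_j(X)^2$ lies in $[0,4R^2]$ and has variance at most $4R^2\Delta_j$.
\item The noise part: conditionally on $X$, $2\varepsilon g_j(X)$ is sub-Gaussian with variance proxy $4\sigma^2g_j(X)^2$. Taking expectation in $X$ gives $\EE e^{-2t\varepsilon g_j(X)}\leq\EE e^{2t^2\sigma^2g_j(X)^2}$.
\item For $t\leq c/(\sigma^2R^2)$, and using $e^x\leq1+2x$ for small $x$, this is at most $\exp(4t^2\sigma^2\Delta_j)$.
\end{itemize}
Combining the two parts with Cauchy--Schwarz (or handling them separately), I expect, for $0<t\leq c/(R^2+\sigma^2)$,
\[
\EE\exp\Big(t\sum_i(\Delta_j-Z_{j,i})\Big)\leq\exp\big(Cn_2t^2(R^2+\sigma^2)\Delta_j\big).
\]

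\textbf{Step 2: union bound and absorption.} Choose $t=c/(R^2+\sigma^2)$ small enough that $Ct(R^2+\sigma^2)\leq1/4$. Markov's inequality and a union bound over $j\leq J$ then give, with probability at least $1-e^{-x}$, simultaneously for all $j$,
\[
\frac1{n_2}\sum_iZ_{j,i}\geq\frac34\Delta_j-C(R^2+\sigma^2)\frac{x+\log J}{n_2}.
\]
The same argument applied to the single index $j^\star$ gives the upper deviation $\frac1{n_2}\sum_iZ_{j^\star,i}\leq\frac54\Delta_{j^\star}+C(R^2+\sigma^2)x/n_2$. Chaining these two inequalities through the selection inequality yields
\[
\Delta_{\hat\jmath}\leq\tfrac53\Delta_{j^\star}+C(R^2+\sigma^2)\frac{x+\log J}{n_2}.
\]
Integrating this tail bound over $x$ gives the expectation bound, and the constant $5/3\leq3$ leaves room for slack. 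Since $J\geq2$, $\log J$ is bounded below by a positive constant and absorbs the additive terms coming from the integration in $x$.

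\textbf{Main obstacle.} The delicate step is the moment generating function control in Step 1. Because the noise is only conditionally sub-Gaussian, the cross term $\varepsilon g_j(X)$ is merely sub-exponential after averaging over $X$. Its variance proxy must scale like $\Delta_j$, not like $R^2$, so that it can be absorbed into $\frac14\Delta_j$ rather than producing a slow $1/\sqrt{n_2}$ rate. I would handle this by first conditioning on the design, bounding the noise part exactly via the sub-Gaussian assumption. That leaves the exponential moment $\EE\exp(\text{const}\cdot t^2\sigma^2g_j(X)^2)$, which, since $g_j(X)^2$ is bounded by $4R^2$, is controlled by $1+Ct^2\sigma^2\Delta_j$ for $t$ in the stated range.
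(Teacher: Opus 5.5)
Your proof is correct, and it takes a different route from the paper's.

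\textbf{Reference point.} You compare every candidate with $f^\star$. The increments $Z_{j,i}=g_j(X_i)^2-2\varepsilon_ig_j(X_i)$ have mean $\Delta_j$ and variance proxy of order $(R^2+\sigma^2)\Delta_j$. You then need two events: a lower deviation holding uniformly over all $j$, and an upper deviation at the single index $j^\star$, chained through the selection inequality. The paper instead compares each $u_j$ directly with the best candidate $u_{j_0}$, through $Z=(Y-u_j(X))^2-(Y-u_{j_0}(X))^2$. This variable has mean $L_j-L_{j_0}$ and variance at most $C(R^2+\sigma^2)(L_j+L_{j_0})$. One union bound then suffices, and Young's inequality with weight $1/2$ gives the constant $3$.

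\textbf{Concentration.} You bound the moment generating function directly by conditioning on $X$ and applying the sub-Gaussian bound. The paper instead integrates the sub-Gaussian tail to get $\EE[|\varepsilon|^k\mid X]\le k!(C\sigma)^k$, checks Bernstein's moment condition, and invokes the moment form of Bernstein's inequality.

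\textbf{What each buys.} Your route is self-contained and gives the sharper leading constant $5/3$. In fact it gives any $1+\eta$ by shrinking $t$, at the cost of a larger $C$. The paper could also reach $1+\eta$ by changing the weight in Young's inequality. The paper's route is shorter because the reference point $j_0$ is built into every comparison.

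\textbf{A small slip.} In Step 1 the admissible range should be $t\le c/(\sigma R)$, so that $8t^2\sigma^2R^2\le1$, rather than $t\le c/(\sigma^2R^2)$. Your final choice $t=c/(R^2+\sigma^2)$ satisfies the correct condition because $R^2+\sigma^2\ge2\sigma R$, so nothing breaks.
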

\begin{proof}
Condition on the candidates, write $L_j=\|u_j-f^\star\|^2_{L^2(\PP_X)}$, and fix $j_0$ with $L_{j_0}=\min_jL_j$.
For each $j$, let $h=u_j-u_{j_0}$ and $Z=(Y-u_j(X))^2-(Y-u_{j_0}(X))^2=q(X)-2\varepsilon h(X)$, with $q=h(u_j+u_{j_0}-2f^\star)$; then $\EE Z=L_j-L_{j_0}$, because $\EE(\varepsilon\mid X)=0$.
Since $|h|\leq2R$, $|u_j+u_{j_0}-2f^\star|\leq4R$ and $\EE h(X)^2\leq2(L_j+L_{j_0})$, we have $q=(u_j-f^\star)^2-(u_{j_0}-f^\star)^2\in[-4R^2,4R^2]$, hence $|q-\EE q|\leq8R^2$, and $\EE q(X)^2\leq32R^2(L_j+L_{j_0})$.
Integrating the tail bound $\PP(|\varepsilon|>t\mid X)\leq2e^{-t^2/(2\sigma^2)}$ gives $\EE[|\varepsilon|^k\mid X]\leq k!(C\sigma)^k$, hence $\EE|2\varepsilon h(X)|^k\leq k!(C\sigma)^k(2R)^{k-2}\EE h(X)^2$ for $k\geq2$.
With $\sigma R\leq R^2+\sigma^2$, both parts give, for every integer $k\geq2$,
\[
\EE|Z-\EE Z|^k\leq\frac{k!}2\,b^{k-2}v,
\qquad
b=C(R^2+\sigma^2),
\qquad
v=C(R^2+\sigma^2)(L_j+L_{j_0}).
\]
Bernstein's inequality \citep[Theorem~2.10]{boucheron2013concentration}, applied to the centered variables $\EE Z-Z_i$ with variance factor $n_2v$ and scale factor $b$ and then divided by $n_2$, and Young's inequality give, with probability at least $1-e^{-t}$,
\[
\EE Z-\frac1{n_2}\sum_{i\leq n_2}Z_i\leq\sqrt{\frac{2vt}{n_2}}+\frac{bt}{n_2}\leq\frac{L_j+L_{j_0}}2+C(R^2+\sigma^2)\frac t{n_2}.
\]
Take $t=\log J+u$ with $u\geq0$; by a union bound, this holds for all $j$ simultaneously with probability at least $1-e^{-u}$.
The empirical loss difference of $\hat\jmath$ is nonpositive, so on this event $L_{\hat\jmath}-L_{j_0}\leq(L_{\hat\jmath}+L_{j_0})/2+C(R^2+\sigma^2)(\log J+u)/n_2$, that is, $L_{\hat\jmath}\leq3L_{j_0}+C(R^2+\sigma^2)(\log J+u)/n_2$.
Integrating this tail bound over $u\geq0$ and using $\log J\geq\log2$ proves the lemma.
\end{proof}

\subsection{Proof of \texorpdfstring{\cref{thm:adaptation}}{the adaptation theorem}}
\begin{proof}
Take $C_0\geq1$, so that $\sigma^2/(n_1\rho^2)\leq1$; since $s>r$, $\delta_{n_1}=\rho(\sigma^2/(n_1\rho^2))^{(s-r)/(2s+d)}\leq\rho$.
For $n\geq4$, $n/3\leq n_1\leq n/2\leq n_2$.

\emph{Selection.}
The constant $C_{\mathrm{emb}}$ exists: by \cref{cor:extension}, every $f\in H^s(\Omega)$ agrees on $\Omega$ with a periodic $g$, and $|g|\leq\sum_k|\hat g_k|\leq Ca_s^{1/2}\|g\|_{H^s(\T)}\leq C\|f\|_{H^s(\Omega)}$.
Every $f\in\Hc(\rho)$ therefore satisfies $|f|\leq C_{\mathrm{emb}}\rho$, and clipping is the pointwise projection onto an interval containing the values of $f^\star$, so $\|\check f_{\lambda,\mu}-f^\star\|_{L^2(\PP_X)}\leq\|\hat f_{\lambda,\mu}-f^\star\|_{L^2(\PP_X)}$, where $\hat f_{\lambda,\mu}$ is the unclipped fit.
The grid $\mathcal G$ has at most $2+\log_2n_1$ points, so the number $J\geq2$ of candidates satisfies $\log J\leq C\log\log(3+n)$.
\Cref{lem:validation} with $R=C_{\mathrm{emb}}\rho$, followed by expectation over the training sample and $\EE\min_j\leq\min_j\EE$, gives
\[
\EE\|\tilde f_n-f^\star\|^2_{L^2(\PP_X)}\leq3\min_{\lambda\in\mathcal G,\,\mu\in\mathcal G\cup\{0\}}\EE\|\hat f_{\lambda,\mu}-f^\star\|^2_{L^2(\PP_X)}+C(\rho^2+\sigma^2)\frac{\log\log(3+n)}n.
\]

\emph{A good candidate.}
Fix $\delta\geq0$ and $f^\star\in\F(\rho,\delta)$.
Since $s>r$, $\delta_n\leq\delta_{n_1}$, and $\ell$ at sample size $n_1$ is at most $\ell$ at sample size $n$.
Hence, if $\delta$ is in regime (a) of \cref{thm:elliptic}, resp.\ \cref{thm:hyperbolic}, at sample size $n$, it is also in regime (a) at sample size $n_1$; let $(\lambda_\star,\mu_\star)$ be the parameters of (a) at sample size $n_1$.
Otherwise, let $(\lambda_\star,\mu_\star)$ be the parameters of (b) at sample size $n_1$.
The assumptions of the theorem are the sample-size assumptions of \cref{thm:elliptic,thm:hyperbolic} at $n_1$, so $\lambda_\star\in[1/n_1,1]$, as shown in \cref{app:upper}.
If $\mu_\star>0$, then $\delta\leq\delta_{n_1}\leq\rho$ (in the hyperbolic case, $\ell\geq\log3>1$ because $\sigma^2/(n_1\rho^2)\leq1$), so $\lambda_\star\rho^2/\delta^2\geq\lambda_\star$; the caps $\lambda_\star^{r(d-1)/(sd)}$ and $\lambda_\star^{(r-1)/s}$ have exponents in $[0,1]$, so $\mu_\star\in[\lambda_\star,1]$.
The smallest point of $\mathcal G$ above any $t\in[1/n_1,1]$ lies in $[t,\min(2t,1)]$.
Let $\lambda'$ be this point for $t=\lambda_\star$, let $\mu'=0$ if $\mu_\star=0$, and let $\mu'$ be this point for $t=\mu_\star$ otherwise.
The truncation $m_{\lambda',\mu'}$ satisfies the requirement of \cref{lem:upper}, and $(\lambda',\mu')\in(0,1]\times[0,1]$.
Since the sub-Gaussian bound implies (A2), \cref{lem:upper} with $n_1$ observations bounds the risk of $\hat f_{\lambda',\mu'}$ by $C(\lambda'\rho^2+\mu'\delta^2+\sigma^2\mathcal N(\lambda',\mu')/n_1)$.
Here $\lambda'\rho^2+\mu'\delta^2\leq2(\lambda_\star\rho^2+\mu_\star\delta^2)$, and the bounds of \cref{thm:count} are nonincreasing in $\lambda$ and $\mu$ and uniform in $m$, so they bound $\mathcal N(\lambda',\mu')$ by their values at $(\lambda_\star,\mu_\star)$.
The computations of \cref{app:upper} then bound this risk by $2C$ times the bound of (a), resp.\ (b), at sample size $n_1$.
Since $n_1\geq n/3$, every power of $1/n_1$ appearing there, with exponent at most one, is at most $3$ times the same power of $1/n$, and the logarithms at $n_1$ are at most their values at $n$.
So $\EE\|\hat f_{\lambda',\mu'}-f^\star\|^2_{L^2(\PP_X)}$ is at most a constant times the bound of \cref{thm:elliptic}, resp.\ \cref{thm:hyperbolic}, at sample size $n$, in the regime of $\delta$.

Inserting this candidate in the selection bound shows that $\tilde f_n$, whose construction does not use $\delta$, has worst-case risk over $\F(\rho,\delta)$ at most a constant times the upper bound of \cref{thm:elliptic}, resp.\ \cref{thm:hyperbolic}, plus $C(\rho^2+\sigma^2)\log\log(3+n)/n$, for every $\delta\geq0$.
\end{proof}

\section{The fast algorithm}
\label{app:algo}

\subsection{Proof of \texorpdfstring{\cref{thm:algo}}{the complexity theorem}}
\begin{proof}
All operation counts start from the exact generating sequences of $\hat\Sigma$ and $G$ and the exact vector $v_n$.

\emph{Cost of one iteration.}
The entries of $\hat\Sigma$ and $G$ depend only on $l-k\in[-2m,2m]^d$, so each matrix is $d$-level Toeplitz and embeds into a $d$-level circulant matrix of size $(4m+1)^d$, which the discrete Fourier transform diagonalizes (\citealp[Theorem~4.8.2 and Problem~P4.8.4]{golub2013matrix} in one dimension; the $d$-level case is the tensor product).
Each Toeplitz product therefore costs $O(m^d\log(2+m))$ operations.
Since $S$ and $D$ are diagonal, so do a product by $A_n$, the application of the preconditioner $\lambda S^2$, and one conjugate-gradient iteration.

From now on, take the parameters of \cref{thm:elliptic} or of the upper bound of \cref{thm:hyperbolic}; then $s>d/2$ and $1/n\leq\lambda\leq1$, as shown in \cref{app:upper}, and $0\leq\mu\leq1$, because $\mu$ is at most a nonnegative power of $\lambda$.

\emph{Conditioning.}
The preconditioned matrix $Q=(\lambda S^2)^{-1/2}A_n(\lambda S^2)^{-1/2}$ equals $I+\lambda^{-1}S^{-1}\hat\Sigma S^{-1}+\mu\lambda^{-1}S^{-1}D^\ast GDS^{-1}\succeq I$.
For every $\theta$, the Cauchy--Schwarz inequality and the feature bound of \cref{app:tools} give $\theta^\ast S^{-1}\hat\Sigma S^{-1}\theta\leq\sup_x|\phi(x)^\ast S^{-1}\theta|^2\leq a_s\|\theta\|_2^2$.
Parseval's identity gives $\theta^\ast G\theta=\|f_\theta\|^2_{L^2(\Omega)}\leq(4L)^d\|\theta\|_2^2$, and $\sup_{k\in\Z^d}|P(i\pi k/(2L))|^2/(1+\|k\|_2^{2s})<\infty$ because $r\leq s$, so $\|S^{-1}D^\ast GDS^{-1}\|_{\mathrm{op}}\leq C$ independently of $m$.
Since $\lambda,\mu\leq1$, it follows that $I\preceq Q\preceq(C/\lambda)I$.

\emph{Accuracy.}
Write $\|z\|_A^2=z^\ast Az$ for $A\succ0$.
Starting from $\theta^{(0)}=0$, the convergence bound of preconditioned conjugate gradients with condition number at most $C/\lambda$ gives
\[
\|\theta^{(t)}-\hat\theta\|_{A_n}^2\leq4\exp(-ct\sqrt\lambda)\|\hat\theta\|_{A_n}^2
\]
\citep[Section~11.5 and Eq.~(11.3.27)]{golub2013matrix}.
If the residual vanishes at some iteration $t\leq K$, then $\theta^{(t)}=\hat\theta$ because $A_n\succ0$, and the bounds below hold trivially; otherwise the estimate applies with $t=K$.
Let $\Phi^\ast=(\phi(X_1),\dots,\phi(X_n))$ and $Y=(Y_1,\ldots,Y_n)^\top$, so that $v_n=\Phi^\ast Y/n$.
Since $A_n\succeq\Phi^\ast\Phi/n$, we have $\Phi A_n^{-1}\Phi^\ast/n\preceq I_n$, hence $\|\hat\theta\|_{A_n}^2=v_n^\ast A_n^{-1}v_n\leq\|Y\|_2^2/n$ and $\EE\|\hat\theta\|_{A_n}^2\leq\kappa\rho^2+\sigma^2$ by (A1)--(A2).
Moreover, $\Sigma\preceq\kappa(4L)^dI$ by Parseval's identity and $A_n\succeq\lambda S^2\succeq\lambda I$, so
\[
\EE\|f_{\theta^{(t)}}-f_{\hat\theta}\|^2_{L^2(\PP_X)}\leq\kappa(4L)^d\lambda^{-1}\EE\|\theta^{(t)}-\hat\theta\|_{A_n}^2\leq Cn\exp(-ct\sqrt\lambda)(\kappa\rho^2+\sigma^2),
\]
using $\lambda\geq1/n$.
For fixed $\rho$ and $\sigma$, a large enough $C_K$ makes the right-hand side at most $\sigma^2/n$ whenever $t\geq C_K\lambda^{-1/2}\log(2+n)$.
Then $\EE\|f_{\theta^{(t)}}-f^\star\|^2_{L^2(\PP_X)}\leq2\EE\|f_{\hat\theta}-f^\star\|^2_{L^2(\PP_X)}+2\sigma^2/n$, and $\sigma^2/n\leq\rho^2(\sigma^2/(n\rho^2))^{\gamma(s,d-1)}$ because $\sigma^2/(n\rho^2)\leq1$.
Since each of the bounds of \cref{thm:elliptic,thm:hyperbolic} is at least $C\rho^2(\sigma^2/(n\rho^2))^{\gamma(s,d-1)}$, because $\log(2+n\rho^2/\sigma^2)\geq1$ and $\gamma(s,d)<\gamma(s,d-1)$, this gives these upper bounds with $C$ replaced by $2C+2$.

\emph{Total cost.}
Let $K=\lceil C_K\lambda^{-1/2}\log(2+n)\rceil$ and $m=\lceil\max\{\lambda^{-1/(2s)},(\mu/\lambda)^{1/(2s-2r)}\}\rceil$.
In the statistical theorems, either $\mu=0$, or $\mu\leq\lambda^a$ with $a=(r-1)/s$ for hyperbolic and $a=r(d-1)/(sd)$ for elliptic operators.
Hence $m\leq2\lambda^{-b/(2s)}$, with $b=1$ if $\mu=0$ and $b=(1-a)s/(s-r)\geq1$ otherwise, that is,
\[
b=1+\frac1{s-r}\ \text{(hyperbolic)},
\qquad
b=1+\frac r{d(s-r)}\ \text{(elliptic)}.
\]
With $\lambda\geq1/n$, this gives $\log(2+m)\leq C\log(2+n)$, so the $K$ iterations cost at most $C\lambda^{-1/2}m^d\log^2(2+n)$ operations.
Since $\lambda\geq(\sigma^2/(n\rho^2))^{\gamma(s,d-1)}$,
\[
\lambda^{-1/2}m^d\leq C\lambda^{-(s+bd)/(2s)}\leq C\Big(\frac{n\rho^2}{\sigma^2}\Big)^{(s+bd)/(2s+d-1)}.
\]
The exponent is at most one if and only if $(b-1)d\leq s-1$.
This holds for $b=1$ since $s>1$, for hyperbolic operators if and only if $(s-1)(s-r)\geq d$, and for elliptic operators if and only if $(s-1)(s-r)\geq r$, that is, $s(s-r-1)\geq0$, that is, $s\geq r+1$.
Under these conditions, $\lambda^{-1/2}m^d\leq Cn\rho^2/\sigma^2$, so for fixed $\rho$ and $\sigma$ the iterations cost $O(n\log^2(2+n))$ operations in total.
This proves the per-iteration cost, the risk bound after $K\geq C_K\lambda^{-1/2}\log(2+n)$ iterations, and the total cost stated in \cref{thm:algo}.
\end{proof}

\subsection{Experiments}
\label{app:exp}
\begin{figure}[t]
\centering
\includegraphics[width=0.32\textwidth]{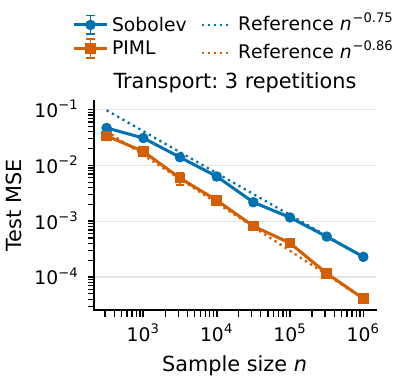}
\includegraphics[width=0.32\textwidth]{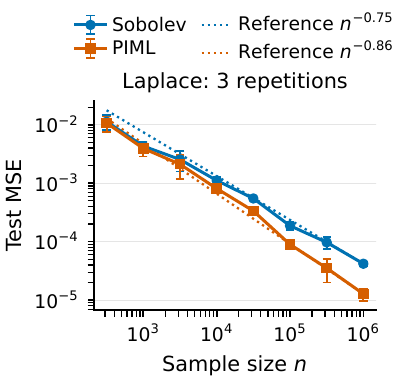}
\includegraphics[width=0.32\textwidth]{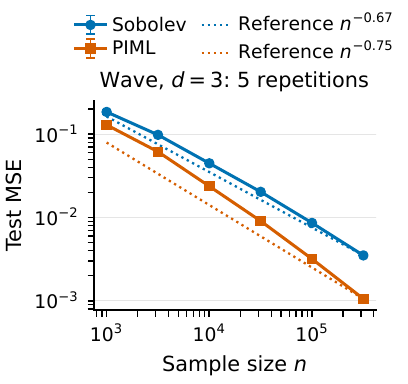}
\caption{Mean test errors for exact physics, with $\pm2$ estimated standard errors.
Left and middle: transport and Laplace operators in dimension two, three repetitions, reference exponents $0.75$ and $6/7$.
Right: wave operator in dimension three, five repetitions, reference exponents $2/3$ and $3/4$.}
\label{fig:ops}

\end{figure}
\paragraph{Common setting.}
All experiments use uniform design on $\Omega=(-1/2,1/2)^d$, Gaussian noise with $\sigma=0.5$, $s=3$ and $L=1$, and test errors on 20,000 independent uniform points.
All targets are finite trigonometric or exponential sums, hence smoother than the assumed Sobolev class.
The schedules use the powers of $n$ of \cref{thm:elliptic,thm:hyperbolic} with multiplicative constants set to one, rather than their radius and noise calibration; for the two-dimensional wave target, the sample-size condition $n\geq(\rho/\sigma)^{12}$ is about $4.5\cdot10^{28}$, so these illustrations do not verify the finite-sample guarantees.
The code divides $G$ by $(4L)^d$, so its weight $\mu_{\mathrm{code}}$ corresponds to $\mu=\mu_{\mathrm{code}}/(4L)^d$.

\paragraph{Two-dimensional wave operator.}
For $\D=\partial_t^2-\partial_x^2$, the target is $u_1(x+t)+u_2(x-t)$, with $u_1(z)=\sum_{k=1}^{300}k^{-3.6}\cos(\pi kz+\varphi_k)$ and $u_2(z)=\sum_{k=1}^{300}k^{-3.6}\sin(\pi kz+\varphi'_k)$, where the phases are independent uniform draws fixed across repetitions; its $H^3$ norm is approximately $122$.
The Sobolev schedule is $\lambda=n^{-3/4}$, $\mu=0$, $m=\lceil2n^{1/8}\rceil$, and the wave schedule is $\lambda=n^{-6/7}$, $\mu_{\mathrm{code}}=n^{-2/7}$, $m=\lceil n^{2/7}\rceil$, so the reference exponents are $\gamma(3,2)=0.75$ and $\gamma(3,1)=6/7$.
Ten repetitions are saved for each $n\in\{10^2,10^{2.5},\ldots,10^7\}$ (half-decade steps, $n$ rounded to the nearest integer).

\paragraph{Imperfect physics.}
For $\delta>0$, the target is $u_1(x+t)+u_2(x-t)+\delta u_3/\|\D(u_3)\|_{L^2(\Omega)}$, so that its residual is $\delta$, where $u_3$ is a Fourier sum over the half-disk $1\leq\|k\|_2\leq40$ with standard Gaussian amplitudes multiplied by $\|k\|_2^{-4.2}$ and uniform phases; here $\|u_3\|_{H^3(\Omega)}/\|\D(u_3)\|_{L^2(\Omega)}\approx7.1$.
The residuals $\delta\approx0.979$ and $\delta\approx11.39$ give $\delta/\delta_n=0.1$ and $1$ at $n=10^4$, where $\delta_n$ uses the perturbed target's $H^3$ norm, computed by quadrature (its square is quadratic in $\delta$).
The Sobolev grid is $\lambda\in\{10^{-7},10^{-6.5},\ldots,10^{-1}\}$, and the physics-informed grid is $\lambda\in\{10^{-7},10^{-6},\ldots,10^{-1}\}$, $\mu_{\mathrm{code}}\in\{0,10^{-4},10^{-3},\ldots,10^4\}$; the truncations are $\lceil2n^{1/8}\rceil$ and $\lceil1.5n^{2/7}\rceil$.
With ten repetitions for each $n\in\{10^2,10^{2.5},\ldots,10^7\}$, and the same samples for the three residuals, the figure reports the ratio of the mean minimal test errors; the same evaluation points serve for selection and reporting.
Up to $n=3\cdot10^4$, the selected $\lambda$ lies inside the grid for both estimators in every repetition; at $n=10^5$, the smallest value $\lambda=10^{-7}$ is selected in at most three repetitions.
At $n=3\cdot10^5$, it is selected in up to nine repetitions, depending on the estimator and the residual, and for $n\geq10^6$ in almost all repetitions: the grid, which is not calibrated to $\sigma^2/(n\rho^2)$, then limits the comparison, so the points with $n\geq10^6$ should not be interpreted.
At every sample size, the selected physics-informed fit stops at the cap of $5000$ conjugate-gradient iterations in some repetitions, so the oracle may select an early-stopped iterate; for $n\geq10^6$, this happens in almost all repetitions.
The full grids of test errors and iteration counts are saved with the code.

\paragraph{Three-dimensional wave operator.}
For $\D=\partial_t^2-\Delta_{x'}$, the target is $f^\star(t,x')=\sum_{k\in\Z^2,\,1\leq\|k\|_2\leq20}\|k\|_2^{-4.5}Z_k\cos(\pi\|k\|_2t+\pi\langle k,x'\rangle+\varphi_k)$, with fixed standard Gaussian $Z_k$ and uniform phases $\varphi_k$.
The Sobolev schedule is $\lambda=n^{-2/3}$, $m=\lceil2n^{1/9}\rceil$, and the wave schedule is $\lambda=n^{-3/4}$, $\mu_{\mathrm{code}}=n^{-1/4}$, $m=\lceil n^{1/4}\rceil$.
Five repetitions for $n$ from $10^3$ to $3\cdot10^5$ give error ratios $0.70$, $0.63$, $0.53$, $0.45$, $0.37$ and $0.30$ (\cref{fig:ops}, right); at the last size, the fits take 138 to 143 iterations and 69.8 to 74.2 seconds.

\paragraph{Transport and Laplace operators.}
In dimension two, the transport operator is $\partial_t+\partial_x$, with exact target $u_1(x-t)$, and the Laplace target is $f^\star(t,x)=\sum_{k=1}^{300}k^{-3.1}e^{\pi k(t-1/2)}\cos(\pi kx+\varphi_k)$.
Both physics-informed schedules use $\lambda=n^{-6/7}$ and $m=\lceil n^{2/7}\rceil$, with $\mu_{\mathrm{code}}=1$ for transport and $\mu_{\mathrm{code}}=\lambda^{1/3}$ for Laplace, and the Sobolev schedule is as for the wave operator.
Three repetitions are saved, and the slopes are $(0.683,0.840)$ and $(0.689,0.838)$, respectively (\cref{fig:ops}, left and middle).

\paragraph{Solver and timing provenance.}
The sequences of $\hat\Sigma$ and $v_n$ are computed by \texttt{FINUFFT} with tolerance $10^{-12}$ in dimension two and $10^{-10}$ in dimension three, and Toeplitz products use zero-padded FFTs.
For \cref{fig:exp}, the fits follow \cref{alg:fast}: conjugate gradients preconditioned by $\lambda S^2$ and started at $\theta^{(0)}=0$, with relative residual tolerance $10^{-8}$ for the rates and $10^{-6}$ for the timing.
The oracle grid search instead warm-starts along decreasing values of $\lambda$, which does not change the minimizer, uses the diagonal of the system as preconditioner, a tolerance of $10^{-7}$ and at most $5000$ iterations.
The timing uses the two-dimensional wave target and schedule, with ten fits for each $n\in\{10^2,10^{2.5},\ldots,10^7\}$ on one thread of an Apple M2 Pro, and includes the computation of the Toeplitz sequences and the solver, but not data generation and test prediction.
The archived runs of \cref{fig:ops} were initialized at $(\lambda S^2)^{-1}v_n$, and their run files do not identify every preconditioner.

\label{endapp}
\bibliography{biblio13}

\begin{thebibliography}{39}
\providecommand{\natexlab}[1]{#1}
\providecommand{\url}[1]{\texttt{#1}}
\expandafter\ifx\csname urlstyle\endcsname\relax
  \providecommand{\doi}[1]{doi: #1}\else
  \providecommand{\doi}{doi: \begingroup \urlstyle{rm}\Url}\fi

\bibitem[Arnone et~al.(2022)Arnone, Kneip, Nobile, and Sangalli]{arnone2022consistency}
Eleonora Arnone, Alois Kneip, Fabio Nobile, and Laura~M. Sangalli.
\newblock Some first results on the consistency of spatial regression with partial differential equation regularization.
\newblock \emph{Statistica Sinica}, 32:\penalty0 209--238, 2022.

\bibitem[Bach(2024)]{bach2024learning}
Francis Bach.
\newblock \emph{Learning Theory from First Principles}.
\newblock MIT Press, Cambridge, Massachusetts, 2024.

\bibitem[Barnett et~al.(2019)Barnett, Magland, and af~Klinteberg]{barnett2019parallel}
Alexander~H. Barnett, Jeremy Magland, and Ludvig af~Klinteberg.
\newblock A parallel nonuniform fast {F}ourier transform library based on an ``exponential of semicircle'' kernel.
\newblock \emph{SIAM Journal on Scientific Computing}, 41\penalty0 (5):\penalty0 C479--C504, 2019.

\bibitem[Bona-Pellissier et~al.(2026)Bona-Pellissier, Meanti, Santacesaria, and Rosasco]{bonapellissier2026piks}
Joachim Bona-Pellissier, Giacomo Meanti, Matteo Santacesaria, and Lorenzo Rosasco.
\newblock {PIKS}: {U}niversal physics-informed kernel methods.
\newblock \emph{arXiv:2607.27062}, 2026.

\bibitem[Boucheron et~al.(2013)Boucheron, Lugosi, and Massart]{boucheron2013concentration}
St{\'e}phane Boucheron, G{\'a}bor Lugosi, and Pascal Massart.
\newblock \emph{Concentration Inequalities: A Nonasymptotic Theory of Independence}.
\newblock Oxford University Press, Oxford, 2013.

\bibitem[Brogat-Motte et~al.(2026)Brogat-Motte, Bona-Pellissier, Meanti, and Rosasco]{brogatmotte2026fast}
Luc Brogat-Motte, Joachim Bona-Pellissier, Giacomo Meanti, and Lorenzo Rosasco.
\newblock Fast learning rates for physics-informed kernel methods.
\newblock \emph{arXiv:2609.18901}, 2026.

\bibitem[Caponnetto and {De Vito}(2007)]{caponnetto2007optimal}
A.~Caponnetto and E.~{De Vito}.
\newblock Optimal rates for the regularized least-squares algorithm.
\newblock \emph{Foundations of Computational Mathematics}, 7:\penalty0 331–368, 2007.

\bibitem[Dai(2025)]{dai2025nonparametric}
Xiaowu Dai.
\newblock Nonparametric estimation via partial derivatives.
\newblock \emph{Journal of the Royal Statistical Society: Series B}, 87:\penalty0 319--336, 2025.

\bibitem[{De Ryck} and Mishra(2024)]{deryck2024numerical}
Tim {De Ryck} and Siddhartha Mishra.
\newblock Numerical analysis of physics-informed neural networks and related models in physics-informed machine learning.
\newblock \emph{Acta Numerica}, 33:\penalty0 633--713, 2024.

\bibitem[Di~Nezza et~al.(2012)Di~Nezza, Palatucci, and Valdinoci]{dinezza2012hitchhiker}
Eleonora Di~Nezza, Giampiero Palatucci, and Enrico Valdinoci.
\newblock Hitchhiker's guide to the fractional {S}obolev spaces.
\newblock \emph{Bulletin des Sciences Math{\'e}matiques}, 136\penalty0 (5):\penalty0 521--573, 2012.

\bibitem[Doum{\`e}che et~al.(2024)Doum{\`e}che, Bach, Biau, and Boyer]{doumeche2024physicsinformed}
Nathan Doum{\`e}che, Francis Bach, G{\'e}rard Biau, and Claire Boyer.
\newblock Physics-informed machine learning as a kernel method.
\newblock In Shipra Agrawal and Aaron Roth, editors, \emph{Proceedings of Thirty Seventh Conference on Learning Theory}, volume 247 of \emph{Proceedings of Machine Learning Research}, pages 1399--1450. PMLR, 2024.

\bibitem[Doum{\`e}che et~al.(2025)Doum{\`e}che, Biau, and Boyer]{doumeche2025convergence}
Nathan Doum{\`e}che, G{\'e}rard Biau, and Claire Boyer.
\newblock On the convergence of {PINN}s.
\newblock \emph{Bernoulli}, 31:\penalty0 2127--2151, 2025.

\bibitem[Doum{\`e}che et~al.(2026)Doum{\`e}che, Bach, Biau, and Boyer]{doumeche2026fast}
Nathan Doum{\`e}che, Francis Bach, G{\'e}rard Biau, and Claire Boyer.
\newblock Fast kernel methods: Sobolev, physics-informed, and additive models.
\newblock In \emph{Proceedings of the 43rd International Conference on Machine Learning}, volume 306 of \emph{Proceedings of Machine Learning Research}, pages 26319--26340. PMLR, 2026.

\bibitem[Doumèche et~al.(2025)Doumèche, Bach, Biau, and Boyer]{doumeche2025pikl}
Nathan Doumèche, Francis Bach, Gérard Biau, and Claire Boyer.
\newblock Physics-informed kernel learning.
\newblock \emph{Journal of Machine Learning Research}, 26\penalty0 (124):\penalty0 1--39, 2025.

\bibitem[Dupont and Scott(1980)]{dupont1980polynomial}
Todd Dupont and Ridgway Scott.
\newblock Polynomial approximation of functions in {S}obolev spaces.
\newblock \emph{Mathematics of Computation}, 34\penalty0 (150):\penalty0 441--463, 1980.
\newblock \doi{10.1090/S0025-5718-1980-0559195-7}.

\bibitem[Evans(1998)]{evans1998partial}
L.~C. Evans.
\newblock \emph{Partial Differential Equations}, volume~19 of \emph{Graduate Studies in Mathematics}.
\newblock American Mathematical Society, Providence, RI, 1st edition, 1998.

\bibitem[Golub and {Van Loan}(2013)]{golub2013matrix}
Gene~H. Golub and Charles~F. {Van Loan}.
\newblock \emph{Matrix Computations}.
\newblock The Johns Hopkins University Press, Baltimore, 4th edition, 2013.

\bibitem[Grisvard(2011)]{grisvard1985elliptic}
Pierre Grisvard.
\newblock \emph{Elliptic Problems in Nonsmooth Domains}, volume~69 of \emph{Classics in Applied Mathematics}.
\newblock SIAM, Philadelphia, 2011.
\newblock Reprint of the 1985 original (Pitman, Boston).

\bibitem[Gy{\"o}rfi et~al.(2002)Gy{\"o}rfi, Kohler, Krzy{\.z}ak, and Walk]{gyorfi2002distribution}
L{\'a}szl{\'o} Gy{\"o}rfi, Michael Kohler, Adam Krzy{\.z}ak, and Harro Walk.
\newblock \emph{A Distribution-Free Theory of Nonparametric Regression}.
\newblock Springer Series in Statistics. Springer, New York, 2002.

\bibitem[Karniadakis et~al.(2021)Karniadakis, Kevrekidis, Lu, Wang, Perdikaris, and Yang]{karniadakis2021piml}
George~Em Karniadakis, Ioannis~G. Kevrekidis, Lu~Lu, Sifan Wang, Paris Perdikaris, and Liu Yang.
\newblock Physics-informed machine learning.
\newblock \emph{Nature Reviews Physics}, 3:\penalty0 422--440, 2021.

\bibitem[Kennedy and O'Hagan(2001)]{kennedy2001bayesian}
Marc~C. Kennedy and Anthony O'Hagan.
\newblock Bayesian calibration of computer models.
\newblock \emph{Journal of the Royal Statistical Society: Series B}, 63:\penalty0 425--464, 2001.

\bibitem[Konen and Nickl(2025)]{konen2025data}
Dimitri Konen and Richard Nickl.
\newblock Data assimilation with the {2D} {N}avier--{S}tokes equations: {O}ptimal {G}aussian asymptotics for the posterior measure.
\newblock \emph{The Annals of Statistics}, 2025.
\newblock To appear.

\bibitem[Koshizuka and Sato(2025)]{koshizuka2025understanding}
Takeshi Koshizuka and Issei Sato.
\newblock Understanding generalization in physics informed models through affine variety dimensions.
\newblock In \emph{Advances in Neural Information Processing Systems}, volume~38, 2025.

\bibitem[Li and Yao(2024)]{li2024physicsencoded}
Tongyu Li and Fang Yao.
\newblock Physics-encoded spatio-temporal regression.
\newblock \emph{arXiv:2410.04170}, 2024.

\bibitem[Lu et~al.(2022)Lu, Chen, Lu, Ying, and Blanchet]{lu2022machine}
Yiping Lu, Haoxuan Chen, Jianfeng Lu, Lexing Ying, and Jose Blanchet.
\newblock Machine learning for elliptic {PDE}s: {F}ast rate generalization bound, neural scaling law and minimax optimality.
\newblock In \emph{International Conference on Learning Representations}, 2022.

\bibitem[Marcondes(2025)]{marcondes2025complexity}
Diego Marcondes.
\newblock Complexity dependent error rates for physics-informed statistical learning via the small-ball method.
\newblock \emph{arXiv:2510.23149}, 2025.

\bibitem[Mishra and Molinaro(2023)]{mishra2022generalization}
Siddhartha Mishra and Roberto Molinaro.
\newblock Estimates on the generalization error of physics-informed neural networks for approximating {PDE}s.
\newblock \emph{IMA Journal of Numerical Analysis}, 43:\penalty0 1--43, 2023.

\bibitem[Mourtada and Rosasco(2022)]{mourtada2022an}
Jaouad Mourtada and Lorenzo Rosasco.
\newblock An elementary analysis of ridge regression with random design.
\newblock \emph{Comptes Rendus Math{\'e}matique}, 360:\penalty0 1055--1063, 2022.

\bibitem[Nickl et~al.(2020)Nickl, van~de Geer, and Wang]{nickl2019convergence}
Richard Nickl, Sara van~de Geer, and Sven Wang.
\newblock Convergence rates for penalised least squares estimators in {PDE}-constrained regression problems.
\newblock \emph{SIAM/ASA Journal on Uncertainty Quantification}, 8:\penalty0 374--413, 2020.

\bibitem[Raissi et~al.(2019)Raissi, Perdikaris, and Karniadakis]{raissi2019PINN}
M.~Raissi, P.~Perdikaris, and G.~E. Karniadakis.
\newblock Physics-informed neural networks: {A} deep learning framework for solving forward and inverse problems involving nonlinear partial differential equations.
\newblock \emph{Journal of Computational Physics}, 378:\penalty0 686--707, 2019.

\bibitem[Sangalli(2021)]{sangalli2021spatial}
Laura~M. Sangalli.
\newblock Spatial regression with partial differential equation regularisation.
\newblock \emph{International Statistical Review}, 89:\penalty0 505--531, 2021.

\bibitem[Scampicchio et~al.(2026)Scampicchio, Toso, Rickenbach, Anderson, and Zeilinger]{scampicchio2026physics}
Anna Scampicchio, Leonardo~F. Toso, Rahel Rickenbach, James Anderson, and Melanie~N. Zeilinger.
\newblock Physics-informed learning under mixing: {H}ow physical knowledge speeds up learning.
\newblock In \emph{International Conference on Learning Representations}, 2026.

\bibitem[Shin et~al.(2020)Shin, Darbon, and Karniadakis]{shin2020convergence}
Yeonjong Shin, J{\'e}r{\^o}me Darbon, and George~Em Karniadakis.
\newblock On the convergence of physics informed neural networks for linear second-order elliptic and parabolic type {PDE}s.
\newblock \emph{Communications in Computational Physics}, 28:\penalty0 2042--2074, 2020.

\bibitem[Stein(1970)]{stein1970lipschitz}
E.~M. Stein.
\newblock \emph{Singular Integrals and Differentiability Properties of Functions}, volume~30 of \emph{Princeton Mathematical Series}.
\newblock Princeton University Press, Princeton, 1970.

\bibitem[Tahmasebi and Jegelka(2023)]{tahmasebi2023exact}
Behrooz Tahmasebi and Stefanie Jegelka.
\newblock The exact sample complexity gain from invariances for kernel regression.
\newblock In \emph{Advances in Neural Information Processing Systems}, volume~36, pages 55616--55646, 2023.

\bibitem[Tsybakov(2009)]{tsybakov2009introduction}
A.~B. Tsybakov.
\newblock \emph{Introduction to Nonparametric Estimation}.
\newblock Springer, New York, 2009.

\bibitem[Tuo and Wu(2015)]{tuo2015efficient}
Rui Tuo and C.~F.~Jeff Wu.
\newblock Efficient calibration for imperfect computer models.
\newblock \emph{The Annals of Statistics}, 43:\penalty0 2331--2352, 2015.

\bibitem[Zhang and Wang(2025)]{zhang2025sobolev}
Qingwen Zhang and Wenjia Wang.
\newblock Sobolev calibration of imperfect computer models.
\newblock \emph{Journal of the American Statistical Association}, 120:\penalty0 408--418, 2025.

\bibitem[Zhao and Lu(2026)]{zhao2026posterior}
Yuxuan Zhao and Yulong Lu.
\newblock Posterior concentration of {B}ayesian physics-informed neural networks for elliptic {PDE}s.
\newblock In \emph{Proceedings of the 43rd International Conference on Machine Learning}, volume 306 of \emph{Proceedings of Machine Learning Research}. PMLR, 2026.

\end{thebibliography}

\end{document}